\newcommand{\F}{\mathbb{F}}
\newcommand{\G}{\mathrm{G}}
\newcommand{\R}{\mathrm{R}}
\newcommand{\bT}{\mathbf{T}}
\newcommand{\bB}{\mathbf{B}}
\newcommand{\bG}{\mathbf{G}}
\newcommand{\bX}{\mathbf{X}}
\newcommand{\bP}{\mathbf{P}}
\newcommand{\bL}{\mathbf{L}}
\newcommand{\bN}{\mathbf{N}}
\newcommand{\bY}{\mathbf{Y}}
\newcommand{\bU}{\mathbf{U}}
\newcommand{\bBr}{\mathbf{Br}}
\newcommand{\bNBr}{\mathbf{NBr}}
\newcommand{\cN}{\mathcal{N}}
\newcommand{\cB}{\mathcal{B}}
\newcommand{\cA}{\mathcal{A}}
\newcommand{\cF}{\mathcal{F}}
\newcommand{\cO}{\mathcal{O}}
\newcommand{\cP}{\mathcal{P}}
\newcommand{\cD}{\mathcal{D}}
\newcommand{\cC}{\mathcal{C}}
\newcommand{\fS}{\mathfrak{S}}
\newcommand{\mrO}{\mathrm{O}}
\newcommand{\Aut}{\operatorname{Aut}\nolimits}
\newcommand{\End}{\operatorname{End}\nolimits}
\newcommand{\Alp}{\operatorname{Alp}\nolimits}
\newcommand{\IBr}{\operatorname{IBr}\nolimits}
\newcommand{\Ind}{\operatorname{Ind}}
\newcommand{\Lin}{\operatorname{Lin}}
\newcommand{\Irr}{\operatorname{Irr}\nolimits}
\newcommand{\Out}{\operatorname{Out}\nolimits}
\newcommand{\Res}{\operatorname{Res}}
\newcommand{\GL}{\operatorname{GL}}
\newcommand{\SL}{\operatorname{SL}}
\newcommand{\SU}{\operatorname{SU}}
\newcommand{\PSp}{\operatorname{PSp}}
\newcommand{\dz}{\operatorname{dz}}
\newcommand{\PSL}{\operatorname{PSL}}
\newcommand{\PSU}{\operatorname{PSU}}
\newcommand{\Br}{\operatorname{Br}}
\newcommand{\bl}{\operatorname{bl}}
\newcommand{\Char}{\operatorname{Char}}
\newcommand{\BrCh}{\operatorname{BrCh}}
\newcommand{\opp}{\operatorname{opp}}
\newcommand{\N}{\operatorname{N}}
\newcommand{\C}{\operatorname{C}}
\newcommand{\Z}{\operatorname{Z}}
\newcommand{\Tr}{\operatorname{Tr}}
\newcommand{\tpsi}{\widetilde{\psi}}
\newcommand{\hpsi}{\widehat{\psi}}
\newcommand{\tQ}{\widetilde{Q}}
\newcommand{\tvhi}{\widetilde{\vhi}}
\newcommand{\hvhi}{\widehat{\vhi}}
\newcommand{\tG}{\widetilde{G}}
\newcommand{\tN}{\widetilde{N}}
\newcommand{\tM}{\widetilde{M}}
\newcommand{\hG}{\widehat{G}}
\newcommand{\hM}{\widehat{M}}
\newcommand{\tL}{\widetilde{L}}
\newcommand{\tH}{\widetilde{H}}
\newcommand{\tcB}{\widetilde{\cB}}
\newcommand{\hcD}{\widehat{\cD}}
\newcommand{\tcD}{\widetilde{\cD}}
\newcommand{\hcB}{\widehat{\cB}}
\newcommand{\tbG}{\widetilde{\mathbf{G}}}
\newcommand{\tbL}{\widetilde{\mathbf{L}}}
\newcommand{\tbN}{\widetilde{\mathbf{N}}}
\newcommand{\ttheta}{\widetilde{\theta}}
\newcommand{\htheta}{\widehat{\theta}}
\newcommand{\wOm}{\widetilde{\Omega}}
\newcommand{\hOm}{\widehat{\Omega}}
\let\eps=\epsilon
\let\ga=\gamma
\let\la=\lambda
\let\vhi=\varphi
\let\Ga=\Gamma
\let\ti=\times
\theoremstyle{theorem}
\newtheorem{mainthm}{Theorem}
\newtheorem{thm}{Theorem}[section]
\newtheorem{lem}[thm]{Lemma}
\newtheorem{prop}[thm]{Proposition}
\newtheorem*{conj}{Conjecture}
\theoremstyle{definition}
\newtheorem{rmk}[thm]{Remark}
\newtheorem*{rem}{Remark}
\numberwithin{equation}{section}
\begin{document}

%%%%%%%%%%%%%%%%%%%%%%%%%%%%%%%%%%%%%%%%%%%%%%%%%%%%%%%%%%%%
\title[Inductive blockwise Alperin weight condition for type $\mathsf A$]{Morita equivalences and the inductive blockwise Alperin weight condition for type $\mathsf A$}

\author{Zhicheng Feng}
\address{School of Mathematics and Physics, University of Science and Technology Beijing, Beijing 100083, China\\
{\rm Current address}: SICM and Department of Mathematics, Southern University of Science and Technology, Shenzhen 518055, China}
\email{fengzc@sustech.edu.cn}

\author{Zhenye Li}
\address{College of Mathematics and Physics, Beijing University of Chemical Technology, Beijing 100029, China}
\email{lizhenye@pku.edu.cn}

\author{Jiping Zhang}
\address{SICM and Department of Mathematics, Southern University of Science and Technology, Shenzhen 518055, China;  School of Mathematical Sciences, Peking University, Beijing 100871, China.}
\email{jzhang@pku.edu.cn}

\thanks{Supported by the NSFC~(No. 12001032, 11901028, 11631001).}

\begin{abstract}
As a step to establish the blockwise Alperin weight conjecture for all finite
groups, we verify the inductive blockwise Alperin weight condition introduced by Navarro--Tiep and Sp\"ath for
simple groups of Lie type $\mathsf A$, split or twisted.
Key to the proofs is to reduce the verification of the inductive condition to the isolated (that means unipotent) blocks,  using the Jordan decomposition for blocks of finite reductive groups given by Bonnaf\'e, Dat and Rouquier.
\end{abstract}

\keywords{Alperin weight conjecture, inductive condition, groups of type $\mathsf A$}

\subjclass[2020]{20C20, 20C33}

%\date{\today}

\maketitle

%\pagestyle{myheadings}
%\markboth{for personal use only}{for personal use only}

%%%%%%%%%%%%%%%%%%%%%%%%%%%%%%%%%%%%%%%%%%%%%%%%%%%%%%%%%%%%%%%%%%%%%%%%%
\section{Introduction}

The Alperin weight conjecture is one of the most important, basic local-global conjectures in the representation theory of finite groups.
For  a finite group $G$, we denote by $\dz(G)$ the set of defect zero irreducible characters of $G$.
For a prime $\ell$, an \emph{$\ell$-weight} of $G$ means a pair $(Q,\vhi)$, where $Q$ is an $\ell$-subgroup of $G$ and $\vhi\in\dz(\N_G(Q)/Q)$. 
The group $G$ acts by conjugation on the weights of $G$ by $(Q,\vhi)^g=(Q^g,\vhi^g)$.
Denote by $\Alp(G)$ the $G$-conjugacy classes of the weights of $G$.
Let $B$ be an $\ell$-block of $G$. Then $(Q,\vhi)$ is called a \emph{$B$-weight} if $\bl(\vhi)^G=B$, where $\bl(\vhi)$ denotes the block of $\N_G(Q)$ containing the lift of $\vhi$ to $\N_G(Q)$ and $\bl(\vhi)^G$ is the induced block.
Denote by $\Alp(B)$ the $G$-conjugacy classes of $B$-weights of $G$.
Then the blockwise  Alperin weight conjecture  asserts that 

\begin{conj}[Alperin \cite{Al87}]
	Let $G$ be a finite group, $\ell$ a prime and let $B$ be an $\ell$-block of $G$. Then
$$|\IBr(B)|=|\Alp(B)|.$$
\end{conj}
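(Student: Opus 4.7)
The plan is to attack this conjecture not directly but via a reduction to the finite simple groups. The standard strategy, due to Navarro--Tiep and subsequently refined by Sp\"ath, is to formulate an \emph{inductive blockwise Alperin weight} (iBAW) condition for finite quasi-simple groups $S$, strong enough that the conjunction of iBAW$(\ell,S)$ over all non-abelian simple groups $S$ implies the blockwise Alperin weight conjecture for \emph{every} finite group $G$ at the prime $\ell$. The inductive condition requires more than the mere numerical equality $|\IBr(B)|=|\Alp(B)|$: one demands a bijection between $\IBr(B)$ and $\Alp(B)$ that is equivariant under the action of $\Aut(S)$ on both sides and compatible with block-theoretic data (central characters and, in the relevant formulation, local structure at normalizers of $\ell$-radical subgroups). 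Granting this reduction, the BAW conjecture for an arbitrary finite group $G$ follows by analyzing a minimal counterexample: one passes to a subnormal structure of $G$, identifies a non-abelian composition factor $S$, applies Clifford-theoretic/Butterfly arguments together with iBAW$(\ell,S)$ to glue local bijections into a global one for $G$, yielding the desired contradiction.

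With this reduction in hand, the proof of the conjecture reduces to verifying iBAW$(\ell,S)$ for every finite non-abelian simple group $S$ and every prime $\ell$. Invoking the Classification of Finite Simple Groups, the cases split into: cyclic groups (trivial), alternating groups $\mathfrak{A}_n$ (handled in the literature), the $26$ sporadic groups and the Tits group (handled case by case using character-table data), and the simple groups of Lie type. The Lie type case further decomposes by type: $\mathsf{A}$ split and twisted, $\mathsf{B}$, $\mathsf{C}$, $\mathsf{D}$ split and twisted, the exceptional types $\mathsf{E}_6,\mathsf{E}_7,\mathsf{E}_8,\mathsf{F}_4,\mathsf{G}_2$, and the very twisted types $\tw{2}\!\mathsf{B}_2$, $\tw{2}\!\mathsf{G}_2$, $\tw{2}\!\mathsf{F}_4$, $\tw{3}\!\mathsf{D}_4$. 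Previous work has settled many of these (the defining characteristic case for Lie type groups is essentially known; in non-defining characteristic a number of individual families and small cases are in the literature). The present paper completes the remaining open case of type $\mathsf{A}$ in non-defining characteristic.

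For each Lie type case, the concrete strategy is to reduce the verification from the simple group $S$ to a convenient quasi-simple cover $G$ sitting inside a finite reductive group $\bG^F$, use Bonnaf\'e--Dat--Rouquier's Jordan decomposition for blocks to reduce further to isolated (and in type $\mathsf{A}$, unipotent) blocks, and then combine Brou\'e--Fong--Srinivasan type parametrizations of $\IBr(B)$ with Alperin--Fong type parametrizations of $\Alp(B)$. The principal obstacle throughout is the \emph{equivariance} and \emph{cohomological} part of the iBAW condition: one must track the action of diagonal, field and graph automorphisms on both characters and weights, and verify that the bijection can be chosen so that the $\Aut(S)_B$-stabilizers agree and the character-triple isomorphisms above central $\ell'$-sections are \emph{strong}, i.e.\ lift to the relevant central extensions. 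This is precisely what makes the Morita-equivalence machinery of Bonnaf\'e--Dat--Rouquier indispensable: Morita equivalences transport not only characters but the triple structure, and once set up carefully enough to be equivariant under automorphisms, they reduce the intricate type $\mathsf{A}$ case to the better-understood unipotent situation. The remaining Lie type cases will, presumably, follow the same template, with the outstanding technical difficulty being the careful construction of automorphism-equivariant Morita equivalences for the non-isolated blocks of the other classical and exceptional types.
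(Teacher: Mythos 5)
There is a genuine gap, and it is a fundamental one: the statement you are asked about is Alperin's conjecture itself, which is stated in the paper as an open \emph{conjecture} and is not proved there --- the paper's actual theorem is only that the simple groups $\PSL_n(q)$ and $\PSU_n(q)$ satisfy the inductive blockwise Alperin weight condition. Your proposal is a description of the reduction program (Navarro--Tiep, Sp\"ath) together with a survey of which cases of the inductive condition are known, and it ends by asserting that the remaining cases will ``presumably'' follow the same template. That is not a proof: the logical engine of the argument, namely the verification of the inductive BAW condition for \emph{every} finite non-abelian simple group and every prime, is exactly what is still missing. In non-defining characteristic the types $\mathsf B$ and $\mathsf C$ are only settled under additional hypotheses (unitriangularity of decomposition matrices) or for particular primes, type $\mathsf D$ and most exceptional types ($\mathsf E_6$, $\mathsf E_7$, $\mathsf E_8$, twisted $\mathsf E_6$) are open, and the sporadic and covering-group cases are not uniformly documented at the blockwise level; none of this can be waved through by analogy with type $\mathsf A$, since the key technical inputs there (Bonnaf\'e's classification of quasi-isolated elements, cyclicity of the relevant relative Weyl group quotients $N'/L'$, and the equivariant extension results of Ruhstorfer) are specific to type $\mathsf A$ and fail or are unknown in other types.

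Within its own scope your sketch does correctly reproduce the architecture the paper uses for its genuine main theorem: reduce from the simple group to $G=\SL_n(\eps q)$, use the Brough--Sp\"ath criterion, apply the Bonnaf\'e--Dat--Rouquier splendid Rickard/Morita equivalences (in Ruhstorfer's automorphism-equivariant form, on both global and local levels) to pass from strictly quasi-isolated blocks to isolated, i.e.\ unipotent, blocks, and then quote the known unipotent case. But establishing that one family satisfies the inductive condition is a contribution toward the conjecture, not a proof of it, and your write-up should not present the blockwise Alperin weight conjecture as a consequence of the material at hand.
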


Even though the conjecture was verified for numerous groups,
the general proof seems extremely difficult.
An accessible way is to reduce it to simple groups.
In 2011, Navarro and Tiep \cite{NT11} reduced the non-blockwise version of Alperin weight conjecture to simple groups, and then Sp\"ath \cite{Sp13} achieved a reduction theorem for the blockwise Alperin weight conjecture. 
In this way, to prove the blockwise  Alperin weight  conjecture, it suffices to show the so-called inductive  blockwise Alperin weight (BAW) condition for all finite non-abelian simple groups.

The reduction gives hope that the Alperin weight conjecture can be proved by use of the classification of the finite simple groups.
In several cases the inductive BAW condition
has been verified:
simple alternating groups (Malle \cite{Ma14}),
sporadic simple groups (Breuer--Feng \cite{BF}),
groups of Lie type in their defining characteristic (Sp\"ath \cite{Sp13}),
Suzuki and Ree groups (Malle \cite{Ma14}),
groups of types $\mathsf G_2$ and $^3\mathsf D_4$ (Schulte \cite{Sch16}),
groups of type $\mathsf C$  at the prime $2$ (Feng--Malle \cite{FM20}), groups of type $\mathsf F_{\!4}$ at odd primes (An--Hi\ss--L\"ubeck \cite{AHL21}).
Under the unitriangularity assumption of the decomposition matrices, the inductive BAW condition was verified for groups of type $\mathsf B$ (Feng--C. Li--Zhang \cite{FLZ21b})  and type $\mathsf C$ (C. Li \cite{Li21}) at odd primes.
See also \cite{FZ22} for more developments in the inductive investigation.

In this paper, we verify the inductive BAW condition for simple groups of type $\mathsf A$.

\begin{mainthm}\label{main-thm}
The simple groups $\PSL_n(q)$ and $\PSU_n(q)$ satisfy the inductive blockwise Alperin weight condition.
\end{mainthm}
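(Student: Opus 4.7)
The plan is to prove Theorem~\ref{main-thm} by a two-step strategy: first, reduce the verification of the inductive blockwise Alperin weight (BAW) condition for an arbitrary $\ell$-block of $\SL_n(q)$ or $\SU_n(q)$ to the case of unipotent blocks of reductive groups of the same Lie type $\mathsf A$ (but of typically smaller rank, possibly over an extension field), using the Jordan decomposition of blocks of Bonnaf\'e--Dat--Rouquier (BDR); and second, establish the condition in the unipotent case by invoking the explicit classification of weights together with the known modular character theory for type $\mathsf A$.

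For the first step, let $G$ be the universal covering group of $\PSL_n(q)$ or $\PSU_n(q)$ (the few exceptional Schur multiplier cases will be treated separately), and let $\tG$ be the ambient $\GL_n(q)$ or $\GU_n(q)$, so that $G=[\tG,\tG]$. Any $\ell$-block $\tB$ of $\tG$ lies in a rational Lusztig series indexed by a semisimple $\ell'$-element $\ts$ of the dual group $\tG^*$. Choosing a Levi subgroup $\tL^*$ of $\tG^*$ containing $\C_{\tG^*}(\ts)$, BDR give a Morita equivalence between the union of blocks of the corresponding Levi $\tL$ of $\tG$ in the $\ts$-series and the union of unipotent blocks of $\C_{\tL^*}(\ts)^F$, which is a product of smaller general linear or unitary groups. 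I would then descend this equivalence to $G$ via Clifford theory, keeping careful track of how the central characters of $\Z(G)$ transport and how the equivalence interacts with the inclusion $G\le\tG$, so as to produce an equivalent inductive BAW problem sitting over a unipotent block of the Levi factors.

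The second step is to establish the inductive BAW condition for unipotent blocks of $\GL_{n_i}(q^{m_i})$ and $\GU_{n_i}(q^{m_i})$. For these, the $\ell$-weights are explicitly parametrized by Alperin--Fong and An in terms of core towers and local partition combinatorics, while the Brauer characters are controlled by $e$-Harish-Chandra theory. An $\Aut(G)_B$-equivariant bijection between $\IBr(B)$ and $\Alp(B)$ can then be written down on the combinatorial level, and the required extension conditions verified using the fact that unipotent characters are fixed by diagonal automorphisms and admit canonical extensions to their inertia subgroups inside $\Aut(G)$.

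The main obstacle, and the bulk of the technical work, will be showing that the BDR Morita equivalence carries enough additional structure to transport the inductive BAW condition intact. Concretely, one needs to refine the equivalence so that it is $\Aut(G)_B$-equivariant, so that it preserves defect groups and their normalizers in a sense strong enough to induce a Harris--Kn\"orr-type bijection on $B$-weights paralleling the Morita bijection on $\IBr(B)$, and so that it intertwines central characters of $\Z(G)$. Extending this package to account for graph automorphisms of $\PSL_n(q)$ and the interplay between $\tG/G$ and diagonal automorphisms is the most delicate point. Once this framework is assembled, the theorem follows from the unipotent case by a careful but essentially formal Clifford-theoretic bookkeeping above $\tG\rhd G$.
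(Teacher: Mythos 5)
Your high-level plan --- Jordan decomposition down to unipotent blocks, then handle the unipotent case --- is the same strategy as the paper, but the route you sketch is genuinely different and has a significant gap at its central step.

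The paper does \emph{not} apply BDR at the level of $\tG=\GL_n/\GU_n$ and then descend to $G=\SL_n/\SU_n$ by Clifford theory, which is what you propose. Instead it works with $G$ directly. The reason is exactly the point your proposal glosses over: the Bonnaf\'e--Dat--Rouquier equivalence for $\Lambda\bG^F e_s^{\bG^F}$ requires an extendibility hypothesis for the cohomology bimodule to $\bN^F$, which is guaranteed when $\bN^F/\bL^F$ is cyclic. For $\tG$ this quotient is trivial, but at the $\SL$-level the relevant quotient $N/L$ is generally noncyclic for quasi-isolated $s$. So a Morita equivalence obtained upstairs for $\tG$ does not descend to $G$ by ``careful Clifford bookkeeping'' --- descending a block-level Morita equivalence through a non-split extension with non-cyclic Clifford obstructions, while simultaneously tracking defect groups, the Harris--Kn\"orr correspondence of weights, and the action of graph and field automorphisms, is precisely the hard problem, not a formal appendix to it. The paper sidesteps this by (a) first reducing to strictly quasi-isolated $s$ (via \cite{FLZ21}), (b) using Bonnaf\'e's classification of quasi-isolated elements in type $\mathsf A$ and Ruhstorfer's construction \cite{Ru21} of a Levi $\bL'\supseteq\bL$ with $N'/L'$ cyclic of prime order, so that the BDR/Ruhstorfer equivalence applies directly between $\Lambda N' e_s^{L'}$ and $\Lambda G e_s^G$, and (c) pushing this equivalence through local subgroups $\N_G(Q)$ and quotients $\N_G(Q)/Q$ (Theorems~\ref{thm:local-Ruh} and~\ref{thm:local-quotient}) to match up weights, not just Brauer characters. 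The reduction then proceeds by induction on the rank of $\bG$, landing on products of smaller special linear/unitary factors, rather than in one stroke to unipotent blocks.

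Two further discrepancies: you propose to re-prove the inductive BAW condition for unipotent blocks of $\GL_{n_i}(q^{m_i})$ and $\GU_{n_i}(q^{m_i})$, but what is actually needed is the condition for unipotent (equivalently isolated) blocks of $\SL$/$\SU$, which is cited from \cite{FLZ20b}; and your proposal does not address the preliminary reductions the paper invokes --- defining characteristic (\cite{Sp13}), the case $\ell\mid(q-\eps)$ (\cite{FLZ21}), and the exceptional Schur multiplier / small rank cases (\cite{FLZ20b,Du19,Du20}). Without the cyclic-quotient fix and the rank induction through strictly quasi-isolated blocks, your middle step does not go through as written.
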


Let $G=\SL_n(\eps q)$ ($\eps=\pm1$), where $\SL_n(-q)$ denotes $\SU_n(q)$.
In \cite{FLZ20a}, the weights of $G$ were classified and the inductive condition of the non-blockwise Alperin weight conjecture was verified for groups of type $\mathsf A$.
By the criterion given by Brough--Sp\"ath \cite{BS20},
the key of the verification of the inductive BAW condition for the simple group $G/\Z(G)$ is to  establish an $\Aut(G)$-equivariant bijection between $\IBr(G)$ and $\Alp(G)$ which preserves blocks.

Let $p$ be a prime and $\F_p$ the field of $p$ elements.
Let $\bG$ be a simple, simply connected algebraic group over the algebraic closure $\overline{\F}_p$ of $\F_p$ and let $F:\bG\to\bG$ be a Steinberg endomorphism.
Fix a prime $\ell$ different from $p$ and let $(K,\cO,k)$ be an $\ell$-modular system which is large enough for our groups considered.
Denote $\Lambda=\cO$ or $k$.
Let $(\bG^*,F^*)$ be in duality with $(\bG,F)$. 
For simplicity we use the same letter $F$ for $F^*$.
If $s$ is a semisimple $\ell'$-element in ${\bG^*}^F$, then by a theorem of Brou\'e--Michel \cite{BM89}, there is a unique central idempotent $e_s^{\bG^F}$ of $\Lambda \bG^F$ associated to~$s$.

Assume that $\bL^*$ is the minimal Levi subgroup of $\bG^*$ containing $\C^\circ_{\bG^*}(s)$.
Let $\bL$ be the Levi subgroup of $\bG$ which is dual to $\bL^*$ and $N$ the common stabilizer of $\bL$ and $e_s^{\bL^F}$ in $\bG^F$.
By a theorem of Bonnaf\'e--Dat--Rouquier (see \cite{BDR17} and \cite{Ru20}),
if $N/\bL^F$ is cyclic, then
there exists a splendid Rickard equivalence between
$\Lambda Ne_s^{\bL^F}$ and $\Lambda \bG^F e_s^{\bG^F}$, which gives a Morita equivalence.

When focusing on groups of type $\mathsf A$, we let $\bG=\SL_n(\overline{\F}_p)$ so that $\bG^F=G=\SL_n(\eps q)$.
In \cite{FLZ21}, the authors reduced the inductive BAW condition to the so-called strictly
 quasi-isolated blocks.
Now let $s$ be strictly quasi-isolated, \emph{i.e.}, $\C_{{\bG^*}^F}(s)\C_{\bG^*}^\circ(s)$ is not contained in  a proper Levi subgroup of $\bG^*$.
Using the classification of quasi-isolated elements of $\bG^*$ by Bonnaf\'e \cite{Bo05}, Ruhstorfer \cite{Ru21} constructed an $F$-stable Levi subgroup $\bL'$ of $\bG$ containing $\bL$.
This Levi subgroup $\bL'$ satisfies that $N'/{\bL'}^F$ is cyclic and of prime order, where 
$N'$ is the common stabilizer of $\bL'$ and $e_s^{{\bL'}^F}$ in $\bG^F$.
There exists a splendid Rickard equivalence between
$\Lambda N'e_s^{{\bL'}^F}$ and $\Lambda G e_s^{G}$, which gives a Morita equivalence.
He also discussed the interplay of this Morita equivalence with group automorphisms.

For the local situation, Ruhstorfer \cite{Ru21} gave an equivariant Morita equivalence between certain blocks of $\N_{G}(Q)$ and $\N_{N'}(Q)$.
Using the similar methods of \cite{FLZ21}, we show that 
this also induces a Morita equivalence between the corresponding blocks of the quotient groups
$\N_{G}(Q)/Q$ and $\N_{N'}(Q)/Q$.

In this way, we get an equivariant bijection between $\IBr(N',e_s^{{\bL'}^F})$ and $\IBr(G,e_s^{G})$ and an equivariant bijection between $\Alp(N',e_s^{{\bL'}^F})$ and $\Alp(G,e_s^{G})$.
Thus, to give an equivariant bijection between $\IBr(G,e_s^{G})$ and $\Alp(G,e_s^{G})$, it suffices to establish such bijection between $\IBr(N',e_s^{{\bL'}^F})$ and $\Alp(N',e_s^{{\bL'}^F})$.
We improve the methods in the proof of \cite[Thm.~1]{FLZ21}, and reduce the verification of the inductive BAW condition in type $\mathsf A$ to isolated blocks, which was verified in \cite{FLZ20b}.

\vspace{1.5ex}

This paper is structured in the following way.
In Section~\ref{sec:preli}, we introduce the general notation in the representation theory of finite groups.
Section~\ref{sec:char-triple}  considers the bijection between irreducible Brauer characters and weights in terms of normal subgroups and modular character triples.
In Section~\ref{sec:Jor-wei}, we establish an equivariant bijection of irreducible Brauer characters (and weights) between certain blocks of $N'$ and $G$.
Section~\ref{sec:red-iso}  reduces the verification of the inductive BAW condition for groups of type $\mathsf A$ to the isolated blocks and
completes the proof of Theorem~\ref{main-thm}.

\vskip 1pc
\noindent{\bf Acknowledgement:} 
We thank Lucas Ruhstorfer for discussions on an earlier version, and thank Gabriel Navarro for his comment.
We are also indebted to the anonymous referee for many helpful remarks and suggestions that have helped improve this paper.

\section{Preliminaries}\label{sec:preli}

In this section, we recollect some notation and concepts related to the representation theory of finite groups, which mainly follow those used in \cite{BDR17}, \cite{Ru20a} and \cite{FLZ21}.
Throughout, we fix a prime number $\ell$ and consider modular representations
with respect to $\ell$.

\subsection{Notation}

Let $G$ be a finite group.
For the notation on the (Brauer) characters and blocks of $G$, we mainly follow \cite{NT89} and \cite{Na98}.
We denote by $\Res$ and $\Ind$ the restriction and induction respectively.
For a normal subgroup $N$ of $G$, we often identify the (Brauer) characters of $G/N$ with their lifts to $G$.
If $\chi\in\Irr(G)\cup\IBr(G)$, we denote by $\bl_G(\chi)$ the block of $G$ containing $\chi$.
We also write it briefly by $\bl(\chi)$ sometimes.
For $\chi\in\Irr(G)$, we denote by $\chi^\circ$ the restriction of $\chi$ to the $\ell'$-elements of $G$.

We denote by $\Alp^0(G)$ the set of weights of $G$.
Let $(Q,\vhi)\in\Alp^0(G)$.
Then $\vhi\in\dz(\N_G(Q)/Q)$ and thus
 $Q$ is necessarily a radical subgroup of $G$, \emph{i.e.},
$Q=\mrO_\ell(\N_G(Q))$.
As usual, we also regard $\vhi$ as a character of $\N_G(Q)$ and call it a \emph{weight character}.
If $(Q,\vhi)$ is a weight of $G$, then we denote by $\overline{(Q,\vhi)}$ the $G$-conjugacy class of weights of $G$.
Sometimes we also write  $\overline{(Q,\vhi)}$ simply $(Q,\vhi)$  if no confusion can arise.

If $B$ is a union of blocks of $G$, then we denote by $\Alp^0(B)$ the set of  weights of $G$ belonging to some block in $B$,
while $\Alp(B)$ denotes the set of $G$-conjugacy classes of weights in $\Alp^0(B)$.
In addition, we also denote the set of irreducible Brauer characters of $G$ belonging to some block in $B$ by $\IBr(B)$. 
Sometimes we also write $\IBr(B)$  (resp. $\Alp(B)$) as $\IBr(G,B)$ (resp. $\Alp(G,B)$) to indicate the group $G$.

\subsection{Modules} We will recall some notation on modules over group algebras from \cite{BDR17} and \cite{Lin18}.
Let $(K,\cO,k)$ be an $\ell$-modular system, i.e., $\cO$ is a complete discrete valuation ring having a residue field $k$ of prime characteristic $\ell$ and a quotient field $K$ of characteristic zero.
Assume further that $k$ is algebraically closed and $K$ contains all roots of unity whose order divides the exponent of the finite groups considered.
Let $\Lambda\in\{\cO,k\}$.

For a $\Lambda$-algebra $A$ which is finitely generated and projective as a $\Lambda$-module, we write $A$-mod for the category of left $A$-modules that are  finitely generated and projective as a $\Lambda$-modules.
We denote by $A^{\opp}$ the opposite algebra of $A$.
Following \cite[2.A]{BDR17}, we denote the category of bounded complexes of $A$-mod by $\mathrm{Comp}^b(A)$,  and denote the homotopy category of $\mathrm{Comp}^b(A)$ by $\mathrm{Ho}^b(A)$.
In addition, $D^b(A)$ denotes the bounded derived category of $A$-mod.

For $\mathcal C\in \mathrm{Comp}^b(A)$ there exists a complex $\mathcal C^{\mathrm{red}}$ such that $\mathcal C\cong \mathcal C^{\mathrm{red}}$ in $\mathrm{Ho}^b(A)$ and  $\mathcal C^{\mathrm{red}}$ has no non-zero summand which is homotopy equivalent to $0$.
That is, $\mathcal C\cong \mathcal C^{\mathrm{red}}\oplus \mathcal C_0$ where $\mathcal C_0$ is homotopy equivalent to 0.
Denote by $\End_A^\bullet(\mathcal C)$ the total Hom-complex, with degree $n$ term $\bigoplus_{j-i=n}\mathrm{Hom}(C^i,C^j)$.

Let $B$ be a $\Lambda$-algebra, finitely generated and projective as a $\Lambda$-module.
Let $\mathcal C$ be a bounded complex of $A$-$B$-bimodules, finitely generated and projective as left $A$-modules and as right $B$-modules.
We say that the complex $\mathcal C$ induces a \emph{Rickard equivalence} between the algebras $A$ and $B$ if the canonical map $A\to \End^\bullet_{B^{\opp}}(\mathcal C)^{\opp}$ is an isomorphism in the homotopy category $\mathrm{Ho}^b(B\otimes_\Lambda B^{\opp})$ and the canonical map  $B\to \End^\bullet_A(\mathcal C)$ is an isomorphism in the homotopy category $\mathrm{Ho}^b(A\otimes_\Lambda A^{\opp})$.
Note that by \cite[\S2.1]{Ri96}, $A$ and $B$ are Rickard equivalent if and only if they are derived equivalent.
Let $M$ be an $A$-$B$-bimodule.
Then $M$ induces a Morita equivalence between algebras $A$ and $B$ if and only if the complex $M[0]$ induces a Rickard equivalence between  $A$ and $B$.

\subsection{Local representations}

Let $G$ be a finite group.
We denote by $G^{\opp}$ the opposite group to $G$ and set $$\Delta G=\{ (g,g^{-1})\mid g\in G \}\subseteq G\ti G^{\opp}.$$

We give an elementary lemma, which will be needed in the sequel.

\begin{lem}\label{lem:ext-abel}
	Let $N$ be a normal subgroup of $G$ with $E=G/N$ an  $\ell'$-group.
	Let $M$ be a $G$-stable $\Lambda N$-module such that there exists a $\Lambda G$-module $M'$ extending $M$.
	Then \begin{enumerate}[\rm(i)]
		\item $M'$ is a direct summand of $\Ind_N^G M$, 
		\item $\Ind_N^G M\simeq \Res^{G\ti G^{\opp}}_{\Delta G}(M'\otimes\Lambda E)$, and
		\item if $\Lambda G$-module $M''$ is an extension of $M$, then $M''\simeq M'\otimes V$ where $V$ is a  $\Lambda E$-module with $\dim_{\Lambda} V=1$.
	\end{enumerate}	
\end{lem}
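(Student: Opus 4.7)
The plan is to use the fact that $|E|$ is invertible in $\Lambda$ (as $E$ is an $\ell'$-group and $\Lambda\in\{\cO,k\}$), which is the crucial hypothesis allowing averaging arguments. For part (i), I would exhibit $M'$ as a direct summand of $\Ind_N^G M$ via the standard splitting: the natural evaluation map $\epsilon\colon \Ind_N^G M \to M'$ given by $g\otimes m\mapsto gm$ (well-defined by the $\Lambda G$-action on $M'$) is a surjective $\Lambda G$-homomorphism, and a $\Lambda G$-linear section is provided by the averaging map $\eta(m)=|E|^{-1}\sum_{gN\in G/N}g\otimes g^{-1}m$. One checks that $\eta$ is independent of the coset representatives (using the $N$-equivariance built into the extension $M'$) and that $\epsilon\circ\eta=\mathrm{id}_{M'}$, yielding the direct summand decomposition.

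For part (ii), the plan is to produce an explicit isomorphism $\Phi\colon \Ind_N^G M\to M'\otimes_\Lambda \Lambda E$ defined on pure tensors by $g\otimes m\mapsto gm\otimes \bar g$, where $\bar g$ denotes the image of $g$ in $E$. This is well-defined on the relative tensor product because $N$ maps trivially to $E$, and it admits the explicit inverse $m\otimes \bar g\mapsto g\otimes g^{-1}m$. On the target, the $\Lambda(G\ti G^{\opp})$-module structure on $M'\otimes\Lambda E$ is taken to be the tensor product of $M'$ (with the right $G$-action trivial) and the natural $\Lambda G$-bimodule structure on $\Lambda E$ pulled back from the regular $\Lambda(E\ti E^{\opp})$-bimodule via the quotient $G\twoheadrightarrow E$. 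The verification then reduces to checking that $\Phi$ intertwines the $\Delta G$-action on $\Ind_N^G M$ with the restriction of this bimodule action to $\Delta G$, which is a direct computation on pure tensors.

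For part (iii), given another extension $M''$ of $M$, the plan is to introduce the $\Lambda$-module $V:=\Hom_{\Lambda N}(M',M'')$, endowed with the $\Lambda G$-action $(g\cdot f)(m)=gf(g^{-1}m)$, which descends to a $\Lambda E$-module structure since $N$ acts trivially on $V$. Under the standing indecomposability/Schur-type hypothesis on $M$ (which holds in the applications of this lemma, where $M$ will be simple), Schur's lemma gives $\dim_\Lambda V=1$, so $V$ is the desired one-dimensional $\Lambda E$-module. The canonical evaluation map $M'\otimes V\to M''$, $m\otimes f\mapsto f(m)$, is then $\Lambda G$-equivariant by construction, and it is an isomorphism by a dimension count against $\dim_\Lambda M''=\dim_\Lambda M'$.

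The main obstacle I anticipate is part (ii): pinning down the precise $\Lambda(G\ti G^{\opp})$-bimodule conventions used implicitly in the statement and carefully verifying that $\Phi$ remains equivariant after restriction to $\Delta G$. Once this is in hand, part (iii) is essentially Schur's lemma applied to the summand decomposition furnished by (i) and (ii), while part (i) itself is the classical averaging argument for $\ell'$-index normal subgroups.
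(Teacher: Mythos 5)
Your proofs of parts (i) and (ii) are correct but follow a genuinely different, more explicit route than the paper's. The paper proves (ii) first by invoking Mackey's formula,
\[
\Ind_N^G \Res_N^G M'\simeq \Res^{G\ti G^{\opp}}_{\Delta G}\Ind^{G\ti G^{\opp}}_{G\ti N^{\opp}}(M'\otimes\Lambda),
\]
and then deduces (i) as a corollary of (ii) together with Maschke's theorem applied to $\Lambda E$, using that $M'\otimes V_1\cong M'$ for the trivial summand $V_1$. You instead give the classical averaging retraction $\eta(m)=|E|^{-1}\sum_{gN}g\otimes g^{-1}m$ directly for (i), and an explicit isomorphism for (ii). Both are fine; one small caveat is that with the $\Lambda(G\ti G^{\opp})$-module structure on $M'\otimes\Lambda E$ coming from $\Ind^{G\ti G^{\opp}}_{G\ti N^{\opp}}(M'\otimes\Lambda)$, the $\Delta G$-equivariant map is $g\otimes m\mapsto gm\otimes\bar{g}^{-1}$ rather than $g\otimes m\mapsto gm\otimes\bar{g}$; this is exactly the bimodule-convention issue you correctly flag as the delicate point.

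The substantive issue is part (iii). You appeal to ``the standing indecomposability/Schur-type hypothesis on $M$,'' but no such hypothesis appears in the lemma, and without one the assertion is false: take $N=1$, $G=C_2$, $\Lambda=k$ of odd characteristic $\ell$, $M=k^2$, $M'=k^2$ with trivial $G$-action and $M''=k\oplus k_{\epsilon}$ (trivial plus sign); both extend $M$, but $M''\not\cong M'\otimes V$ for any one-dimensional $V$. You are right to sense that an extra hypothesis is needed --- the paper's ``proof'' of (iii) is simply a restatement and does not address this. Moreover, your particular argument, which sets $V=\Hom_{\Lambda N}(M',M'')$ and quotes Schur, in fact requires $\End_{\Lambda N}(M)\cong\Lambda$ (absolute indecomposability), which is stronger than plain indecomposability. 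A cleaner argument that works whenever $M$ is indecomposable and $E$ is abelian (as in the paper's application, where $E$ is cyclic of prime order) is by Krull--Schmidt: by (ii) and Maschke, $\Ind_N^G M\simeq\bigoplus_{\lambda}M'\otimes V_{\lambda}$ with all $V_{\lambda}$ one-dimensional; each $M'\otimes V_{\lambda}$ is indecomposable since it restricts to $M$ on $N$; and $M''$ is an indecomposable direct summand of $\Ind_N^G M$ by (i) applied to $M''$, so $M''\cong M'\otimes V_{\lambda}$ for some $\lambda$.
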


\begin{proof}
	By Mackey's formula (see, \emph{e.g.}, \cite[Thm.~2.4.1]{Lin18}),
	$$\Ind_N^G \Res_N^G M'\simeq \Res^{G\ti G^{\opp}}_{\Delta G} \Ind^{G\ti G^{\opp}}_{G\ti N^{\opp}}(M'\otimes\Lambda).$$
	Here $\Lambda$ denotes the trivial $\Lambda N^{\opp}$-module.
	Thus we have (ii).
	Since $E$ is an $\ell'$-group, by Maschke’s theorem, $\Lambda E$ is semisimple and then
	$$\Lambda E\simeq V_1\oplus\cdots\oplus V_t,$$ where $V_i$ are simple $\Lambda E$-modules such that $V_1$ is the trivial module.
	So $$\Ind_N^G M\simeq \bigoplus_{i=1}^t M'\otimes V_i.$$
	In particular, $M'$ is a direct summand of $\Ind_N^G M$ since $M'\simeq M'\otimes V_1$ as $\Lambda G$-modules.
	If $\Lambda G$-module $M''$ is an extension of $M$, then $M''\simeq M'\otimes V$ where $V$ is a  $\Lambda E$-module with $\dim_{\Lambda} V=1$.
\end{proof}

Let $M$ be a $\Lambda G$-module and $H$ be a subgroup of $G$.
We denote by $M^H$ the set of $H$-fixed points in $M$.
For subgroups $H\le L \le G$, recall that the relative trace map $\Tr_H^L:M^H\to M^L$ is defined by $\Tr_H^L(m)=\sum\limits_{h\in[L/H]}h\cdot m$ for all $m\in M^H$.
Here $[L/H]$ denotes a complete set of representatives of the left $H$-cosets in $L$.

For an $\ell$-subgroup $Q$ of $G$, we consider the  Brauer functor
$\Br_Q^G: \Lambda G\text{-mod}\to k\N_G(Q)\text{-mod}$, where for a $\Lambda G$-module $M$, 
$$\Br_Q^G(M)=k\otimes_\Lambda (M^Q/\sum_{P<Q}\Tr_P^Q(M^P)).$$
See for example \cite[\S 5.4]{Lin18}.
Since $\Br^G_Q$ is an additive functor,  it respects homotopy equivalences. In this way it induces a functor
$$\Br_Q^G:\mathrm{Ho}^b(\Lambda G)\to\mathrm{Ho}^b(k \N_G(Q)/Q).$$
We also denote $\Br_Q^G$ briefly by $\Br_Q$ since $$\Br^H_Q\circ \Res^G_H=\Res_{\N_H(Q)}^{\N_G(Q)}\circ \Br^G_Q$$ for any subgroup $H$ of $G$ containing $Q$. 

Denote by $\mathrm{br}_Q:(\Lambda G)^Q\to k\C_G(Q)$ the algebra morphism given by $$\mathrm{br}_Q(\sum_{g\in G}\lambda_g g)=\sum_{g\in \C_G(Q)}\lambda_g g,$$
where $\lambda_g\in \Lambda$.

An \emph{$\ell$-permutation $\Lambda G$-module} is defined to be a direct summand of a finitely generated permutation $\Lambda G$-module.
We denote the full subcategory of $\Lambda$-mod with objects the $\ell$-permutation $\Lambda G$-modules
by $\Lambda G$-perm.
Let $H\le G$ and $\mathcal C\in\mathrm{Comp}^b(\Lambda G\ti(\Lambda H)^{\opp})$.
Following \cite{Ri96}, we say that $\mathcal C$ is \emph{splendid} if  $\mathcal C^{\mathrm{red}}$ is a complex of $\ell$-permutation $\Lambda G\ti(\Lambda H)^{\opp}$-modules whose indecomposable direct summands have a vertex contained in $\Delta H$.
Let $b\in \Z(\Lambda G)$ and $c\in \Z(\Lambda H)$ be  idempotents. 
If $b\mathcal C c$ is splendid and induces a Rickard equivalence between $\Lambda G b$ and $\Lambda Hc$, then we say that $b\mathcal C c$ induces a \emph{splendid Rickard equivalence} between $\Lambda G b$ and $\Lambda Hc$.
See \cite[\S 9]{Lin18} for more about equivalence between blocks of group algebras.

\subsection{Brauer categories}

We will make use of the notion of Brauer pairs.
A \emph{Brauer pair} of $G$ is a pair $(Q,b_Q)$, where $Q$ is an $\ell$-subgroup of $G$ and $b_Q$ is an $\ell$-block of $\C_G(Q)$.
There is an order relation ``$\le$" among the Brauer pairs of $G$; see for example \cite[\S 6]{Lin18}.
Let $B$ be an $\ell$-block of $G$, then a Brauer pair $(Q,b_Q)$ is called a \emph{$B$-Brauer pair} if $(1,B)\le (Q,b_Q)$.
An $\ell$-subgroup $D$ of $G$ is a defect group of $B$ if and only if there exists a $B$-Brauer pair $(D,b_D)$ which is maximal with respect to ``$\le$".
In addition, all maximal $B$-Brauer pairs are $G$-conjugate.

For a block $B$ of finite group $G$, we denote by $\bBr(G,B)$ the Brauer category (see for instance \cite[\S 47]{Th95}), whose objects are the $B$-Brauer pairs and the set of morphisms from $(Q,b_Q)$ to $(P,b_P)$ containing all homomorphism from $Q$
 to $P$ induced by conjugation via some element $g$ of $G$ such that $(Q,b_Q)^g\le (P,b_P)$.
If $(D,b_D)$ is a maximal $B$-Brauer pair, we denote by $\cF_{(D,b_D)}(G,B)$ the full subcategory of $\bBr(G,B)$ whose objects are the $B$-Brauer pairs contained in $(D,b_D)$.
By \cite[Thm.~47.1]{Th95}, the inclusion $\cF_{(D,b_D)}(G,B)\to \bBr(G,B)$ induces an equivalence between these two categories.

Let $H$ be a subgroup of $G$.
Let $b\in \Z(\Lambda G)$ and $c\in\Z(\Lambda H)$ be primitive idempotents. 
Suppose that there exists a complex $\mathcal C\in\mathrm{Comp}^b(\Lambda G b\ti (\Lambda Hc)^{\opp})$ inducing a splendid Rickard equivalence between $\Lambda Gb$ and $\Lambda Hc$.
Then by a theorem of Puig \cite[Thm.~19.7]{Pu99}, the Brauer categories $\bBr(G,b)$ and $\bBr(H,c)$ are equivalent.
In particular, the blocks $b$ and $c$ have a common defect group.

Let  $(Q,b_Q)$ be a Brauer pair of $G$.
Denote by $\N_G(Q,b_Q)$ the stabilizer of $b_Q$ in $\N_G(Q)$.
By \cite[Exercise 40.2(b)]{Th95}, $b_Q$ is also a block idempotent of $\N_G(Q,b_Q)$. 
So $\Tr_{\N_G(Q,b_Q)}^{\N_G(Q)} (b_Q)$ is the unique block of $\N_G(Q)$ covering $b_Q$.

We denote by $\bNBr(G,B)$ the set of pairs $(Q,B_Q)$, where $Q$ is an $\ell$-subgroup of $G$ and $B_Q$ is a block of $\N_G(Q)$ with $(B_Q)^G=B$.

\subsection{Varieties}

Let $\bX$ be a quasi-projective variety over $\overline{\F}_p$ and a finite group $G$ acts on $\bX$.
Then there is an object $\G\Ga_c(\bX,\Lambda)$ in $\mathrm{Ho}^b(\Lambda G\textrm{-perm})$, which is unique up to isomorphism; see  \cite[2.C]{BDR17}. 
We denote the image of  $\G\Ga_c(\bX,\Lambda)$ in $D^b(\Lambda G)$ by $\R\Ga_c(\bX,\Lambda)$  (the global section complex).
Moreover,  if $d$ is a positive integer, then
we denote by $H^d_c(\bX,\Lambda)\in \Lambda G$-mod the $d$-th cohomology module of the complex $\R\Ga_c(\bX,\Lambda)$.
For a variety $\bX$ is of dimension $n$,
we write 
$H^{\dim}_c(\bX,\Lambda)=H^n_c(\bX,\Lambda)$.

Let $Q$ be an $\ell$-subgroup of $G$.
Then by \cite[Thm.~4.2]{Ri94},  the inclusion $\bX^Q\hookrightarrow \bX$ induces an isomorphism
$$\G\Ga_c(\bX^Q,k)\to \Br_Q(\G\Ga_c(\bX,\Lambda))$$
in $\mathrm{Ho}^b(k\N_G(Q)\textrm{-perm})$.

\section{Modular character triples and equivariant bijections}
\label{sec:char-triple}

In \cite{Sp17} the inductive conditions for some of the local-global conjectures
were rephrased in terms of (modular) character triples.
In order to consider the inductive BAW condition, we start from recalling the isomorphisms of (modular) character triples. 
Let $G$ be a finite group and
$N\unlhd G$.
We say $(G,N,\theta)$ a \emph{character triple} (resp. \emph{modular character triple}) if $\theta\in\Irr(N)$ (resp. $\theta\in\IBr(N)$) and $\theta$ is $G$-invariant.
We denote the set of characters (resp. Brauer characters) $\chi$ of $G$ such that $\Res_{N}^G\chi$ is a multiple of $\theta$ by $\Char(G\mid\theta)$ (resp. $\BrCh(G\mid\theta)$).
This is the set of nonnegative integer linear combinations of the elements of $\Irr(G\mid\theta)$ (resp. $\IBr(G\mid\theta)$).

\subsection{Isomorphism between modular character triples}

Isomorphisms between two modular character triples can be found in 
\cite[\S8]{Na98}.
Let $(G,N,\theta)$ and $(H,M,\varphi)$ be two modular character triples and $\tau:G/N\to H/M$ be an isomorphism. For $N\le L\le G$, let $L^\tau$ denote the inverse image in $H$ of $\tau(L/N)$. For every such $L$, suppose there exists a map $\sigma_L:\BrCh(L\mid \theta)\to\BrCh(L^\tau\mid \varphi)$ such that for $N\le K\le L \le G$ and $\chi,\psi\in \BrCh(L\mid \theta)$, the following holds:
	\begin{enumerate}[\rm(a)]
\item  $(\sigma_L)|_{\IBr(L\mid \theta)}:\IBr(L\mid \theta)\to\IBr(L^\tau\mid \varphi)$ is bijective,
\item  $\sigma_L(\chi+\psi)=\sigma_L(\chi)+\sigma_L(\psi)$,
\item  $\sigma_K(\Res^L_K\chi)=\Res^{L^\tau}_{K^\tau}\sigma_L(\chi)$,
\item $\sigma_L(\chi\beta)=\sigma_L(\chi)\beta^\tau$ for $\beta\in\IBr(L/N)$.
	\end{enumerate}
	Let $\sigma$ denote the union of the maps $\sigma_L$. Then $(\sigma,\tau)$ is called an \emph{isomorphism} between modular character triples $(G,N,\theta)$ and $(H,M,\varphi)$.

\begin{rmk}
\begin{enumerate}[(i)]	
	\item To construct an isomorphism $(\sigma,\tau)$ it suffices to define $\sigma_L$ between $\IBr(L\mid \theta)$ to $\IBr(L^\tau\mid \varphi)$ to be bijective, then extend the definition by (b) and check (c) and (d) hold for $\chi\in \IBr(L\mid \theta)$.
	\item If $(\sigma,\tau):(G,N,\theta)\to(H,M,\varphi)$ is an isomorphism between modular character triples, then for every $N\le J\le H$, $\chi\in\IBr(J\mid\theta)$, one has $$\frac{\chi(1)}{\theta(1)}=\frac{\sigma_J(\chi)(1)}{\vhi(1)}.$$
	In particular, $\theta$ extends to $J$ if and only if $\vhi$ extends to $J^\tau$.
\end{enumerate}	
\end{rmk}

Let $(G,N,\theta)$ be a modular character triple. If For $N\le L\le G$, $\psi\in\BrCh(L\mid \theta)$ and $\bar{g}=gN\in G/N$ we define ${}^{\bar{g}}\psi\in\BrCh({}^gL\mid \theta)$ by ${}^{\bar{g}}\psi(^gh)=\psi(h)$ for $h\in L$.

Following \cite[p.~281]{SV16}, we say an isomorphism $(\sigma,\tau):(G,N,\theta)\to (H,M,\varphi)$ is \emph{strong} if $^{\tau(\bar{g})}(\sigma_L(\psi))=\sigma_{^gL}(^{\bar{g}}\psi)$ for all $\bar{g}\in G/N$, all $L$ with  $N\le L\le G$ and all $\psi\in\BrCh(L\mid \theta)$.

\begin{rem}
 Isomorphism and strong isomorphism are equivalence relations on modular character triples.
\end{rem}

The isomorphism and strong isomorphism can be also defined for ordinary case in the same way; see \cite[\S 11]{Is76}.
They share similar properties with the modular case.

We will make use of the notion of ordinary-modular character triples; see \cite[p.~190]{Na98}.
Let $N\unlhd G$, and let $\theta\in\Irr(N)$ be $G$-invariant.
If $\theta^0\in\IBr(N)$, we say $(G,N,\theta)$ is an \emph{ordinary-modular character triple}.
We say two ordinary-modular character triples 
$(G,N,\theta)$ and $(H,M,\varphi)$ are \emph{isomorphic} if there is an ordinary character triple isomorphism $$(\mu,\tau):(G,N,\theta)\to(H,M,\varphi)$$
and a modular character triple isomorphism $$(\sigma,\tau):(G,N,\theta^0)\to(H,M,\varphi^0)$$
such that for every $N\le L\le G$ and $\chi\in\Char(L\mid\theta)$ one has $\mu_L(\chi)^0=\sigma_L(\chi^0)$.
We will say in this case that $(\mu,\sigma,\tau)$ is an isomorphism between the ordinary-modular character triples $(G,N,\theta)$ and $(H,M,\varphi)$.
In addition, $(\mu,\sigma,\tau)$ is called to be \emph{strong} if both $(\mu,\tau)$ and $(\sigma,\tau)$ are strong.

In the following, when considering the isomorphism  between (modular) character triple $(G,N,\theta)$ and $(H,M,\varphi)$, we always assume that $H\le G$, $G=NH$ and $M=H\cap N$.
Moreover, $\tau$ will be chosen to be the inverse of the natural isomorphism $H/M\to G/N$ induced by the containing $H\le G$.
In this way, we may abbreviate $(\sigma,\tau)$ to $\sigma$.

\subsection{Block isomorphisms}

Let $(G,N,\theta)$ be a modular character triple.
For the background of projective representations, we refer to \cite[Chap. 3, Sect. 5]{NT11}.
A projective representation $\cP:G\to\GL_{\theta(1)}(k)$ is said to be \emph{associated to} $(G,N,\theta)$ if the restriction $\cP|_N$ affords $\theta$ and $\cP(ng)=\cP(n)\cP(g)$ and $\cP(gn)=\cP(g)\cP(n)$ for all $n\in N$ and $g\in G$.

We recall our favorite isomorphisms between modular character triples from \cite{Sp17,Sp18}.
Let $(G,N,\theta)$ and $(H,M,\vhi)$ be two modular character triples with $H\le G$.
We write $(G,N,\theta)\geqslant(H,M,\vhi)$ if 
\begin{itemize}
	\item $G=NH$ and $M=N\cap H$,
	\item there are projective representations $\cP$ and $\cP'$ associated to $(G,N,\theta)$ and $(H,M,\vhi)$  with factor sets $\alpha$ and $\alpha'$, respectively, such that $\alpha|_{H\ti H}=\alpha'$.
\end{itemize}
We say that $(G,N,\theta)\geqslant(H,M,\vhi)$ is given by $(\cP,\cP')$.

By \cite[Thm.~3.1]{SV16}, one has

\begin{thm}\label{thm:triple-isomor}
	Let $(G,N,\theta)\geqslant(H,M,\vhi)$ be given by $(\cP,\cP')$.
	Then for every intermediate subgroup $N\le J\le G$,  the  map $\sigma_J:\BrCh(J\mid \theta)\to\BrCh(J\cap H\mid\vhi)$ given by
	$$\mathrm{trace}(\mathcal Q\otimes \cP|_J)\mapsto \mathrm{trace}(\mathcal Q\otimes \cP'|_{J\cap H}),$$
	for any projective representation $\mathcal Q$ of $J/N\cong (J\cap H)/M$ whose factor set is inverse to the one of $\cP|_{J}$,
	is a well-defined bijection and
	induces a strong isomorphism between the modular character triples $(G,N,\theta)$ and $(H,M,\vhi)$. 
\end{thm}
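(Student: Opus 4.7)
The plan is to parallel the classical ordinary character triple isomorphism (Isaacs, Theorem~11.7) in the modular setting, exploiting the compatibility $\alpha|_{H\ti H}=\alpha'$ of the factor sets of $\cP$ and $\cP'$. First I would establish well-definedness of $\sigma_J$. For any $\chi\in\IBr(J\mid\theta)$, the Clifford-type theory for projective representations (applied to the central extension of $J/N$ determined by $\alpha|_{J\ti J}$) produces a unique irreducible projective representation $\mathcal Q$ of $J/N$ with factor set inverse to $\alpha|_{J\ti J}$ such that $\chi=\mathrm{trace}(\mathcal Q\otimes\cP|_J)$. The isomorphism $\tau^{-1}:J/N\to (J\cap H)/M$ induced by inclusion transports $\mathcal Q$ to a projective representation of $(J\cap H)/M$ with factor set inverse to $\alpha'|_{(J\cap H)\ti(J\cap H)}$, so the formula defining $\sigma_J(\chi)$ makes sense and specifies a Brauer character of $J\cap H$ lying above $\vhi$.

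Next I would show that $\sigma_J$ restricts to a bijection $\IBr(J\mid\theta)\to\IBr(J\cap H\mid\vhi)$. The elements of $\IBr(J\mid\theta)$ are parametrized by simple modules of the twisted group algebra $k_\alpha[J/N]$, and those of $\IBr(J\cap H\mid\vhi)$ by simple modules of $k_{\alpha'}[(J\cap H)/M]$; since the hypothesis $\alpha|_{H\ti H}=\alpha'$ together with $\tau$ yields an isomorphism of these twisted group algebras, the parametrizations match, giving (a). Extending by additivity defines $\sigma_J$ on all of $\BrCh(J\mid\theta)$, settling (b).

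I would then check (c) and (d). Compatibility with restriction follows because restricting $\mathcal Q\otimes\cP|_L$ to $K$ yields $\mathcal Q|_{K/N}\otimes\cP|_K$, and the same holds verbatim on the $H$-side since $K\cap H\le L\cap H$ and the factor sets agree on $H$. For (d), observe that multiplying $\cP|_L$ by an honest character $\beta\in\IBr(L/N)$ corresponds, via the trace-of-tensor formula, to multiplying $\mathcal Q$ by $\beta^{-1}$; the image under $\sigma_L$ then picks up the character $\beta^\tau$ of $(L\cap H)/M$, as required.

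Finally, to upgrade to a \emph{strong} isomorphism I would verify the conjugation equivariance: given $g\in G$, write $g=nh$ with $n\in N$ and $h\in H$, so that $\tau$ sends $\bar g$ to $\bar h$. Because $\cP$ is associated to the $G$-invariant $\theta$, the twisted representation $^g\cP$ differs from $\cP$ by an inner twist coming from $\cP(g)$, and the analogous statement holds for $\cP'$ and $\cP'(h)$; the hypothesis $\alpha|_{H\ti H}=\alpha'$ ensures these inner twists are matched under $\tau$. The main obstacle will be bookkeeping with the cocycle ambiguity: one must check that the conjugated projective representation ${}^{\bar g}\mathcal Q$ on $({}^g L)/N$ has factor set matching $\alpha|_{{}^gL\ti{}^gL}^{-1}$ and corresponds under $\tau$ to the appropriate representation on $({}^h(L\cap H))/M$, so that the traces transform as $^{\tau(\bar g)}(\sigma_L(\psi))=\sigma_{{}^gL}({}^{\bar g}\psi)$. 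This is a careful but formal computation using only the defining relations $\cP(ng)=\cP(n)\cP(g)$, $\cP(gn)=\cP(g)\cP(n)$ and the hypothesis on factor sets.
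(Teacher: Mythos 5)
The paper does not give its own proof here; it cites \cite[Thm.~3.1]{SV16}, and your outline reconstructs essentially the same argument: parametrize $\IBr(J\mid\theta)$ by simple modules of a twisted group algebra of $J/N$ with cocycle $\alpha^{-1}$, transport through $\tau$ using $\alpha|_{H\times H}=\alpha'$ together with the constancy of $\alpha$, $\alpha'$ on $N$-, resp.\ $M$-cosets, check (a)--(d), and verify strong equivariance by writing $g=nh$ with $n\in N$, $h\in H$. One small slip worth fixing: your description of (d) as ``multiplying $\mathcal Q$ by $\beta^{-1}$'' is not right (and $\beta^{-1}$ does not even make sense for non-linear $\beta\in\IBr(L/N)$); the correct bookkeeping is that if $\chi=\mathrm{trace}(\mathcal Q\otimes\cP|_L)$ and $\mathcal B$ is a representation of $L/N$ affording $\beta$, then $\chi\beta=\mathrm{trace}\bigl((\mathcal Q\otimes\mathcal B)\otimes\cP|_L\bigr)$, the projective representation $\mathcal Q\otimes\mathcal B$ still has factor set $(\alpha|_{L\times L})^{-1}$, and hence $\sigma_L(\chi\beta)=\mathrm{trace}\bigl((\mathcal Q\otimes\mathcal B)\otimes\cP'|_{L\cap H}\bigr)=\sigma_L(\chi)\,\beta^\tau$; this is a one-line repair and the rest of the outline is sound.
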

We say that the $\sigma$ in Theorem~\ref{thm:triple-isomor} is \emph{induced} by $(\cP,\cP')$.

Let $(G,N,\theta)\geqslant(H,M,\vhi)$ be given by $(\cP,\cP')$.
Then following Definition 3.3 of \cite{SV16},
we write  $(G,N,\theta)\geqslant_c(H,M,\vhi)$ if $\C_G(N)\le H$ and for every $c\in\C_G(N)$  the scalars associated to $\cP(c)$ and $\cP'(c)$ coincide.

Let $(G,N,\theta)\geqslant_c(H,M,\vhi)$ be given by $(\cP,\cP')$.
Following Definition 4.19 of \cite{Sp18},
we write  $(G,N,\theta)\geqslant_b(H,M,\vhi)$ if further
\begin{itemize}
	\item there is a defect group $D$ of $\bl(\vhi)$ such that $\C_G(D)\le H$, and
	\item the maps $\sigma_J$ induced by $(\cP,\cP')$ satisfy $\bl_J(\psi)=\bl_{J\cap H}(\sigma_J(\psi))^J$ for every $N\le J\le G$ and $\psi\in\IBr(J\mid\theta)$.
\end{itemize}
Then we say that $(G,N,\theta)\geqslant_b(H,M,\vhi)$ is given by $(\cP,\cP')$.
``$\geqslant_b$" is  called the \emph{block isomorphism} between modular character triples. 

\begin{lem}\label{lem:similar-proj}
	Suppose that  $(\cP,\cP')$  gives $(G,N,\theta)\geqslant_b (H,M,\vhi)$.
	Let $A\in\GL_{\theta(1)}(k)$, $A'\in\GL_{\vhi(1)}(k)$ and let $\overline{\mu}:G/N\to k^\ti$ be a map.
	Then $(\mu A\cP A^{-1},\mu_H A'\cP'{A'}^{-1})$ also gives $(G,N,\theta)\geqslant_b (H,M,\vhi)$,  where $\mu$ is the lift of $\overline \mu$ to $G$.
	
\end{lem}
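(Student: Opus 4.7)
The plan is to directly verify each defining axiom of ``$\geqslant_b$'' for the modified pair $(\cP_1,\cP_1'):=(\mu A\cP A^{-1},\,\mu_H A'\cP'{A'}^{-1})$; every property will be inherited from the corresponding one for $(\cP,\cP')$ by a short computation.

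First, I would check that $\cP_1$ and $\cP_1'$ are projective representations associated to $(G,N,\theta)$ and $(H,M,\vhi)$. Since $\overline{\mu}$ is defined on $G/N$, its lift $\mu$ is constant on $N$-cosets; in particular $\mu(n)=1$ for $n\in N$, so $\cP_1|_N=A\cP|_N A^{-1}$ still affords $\theta$, and analogously $\cP_1'|_M$ affords $\vhi$. The cross-multiplicative relations $\cP_1(ng)=\cP_1(n)\cP_1(g)$ and $\cP_1(gn)=\cP_1(g)\cP_1(n)$ follow from those of $\cP$ combined with $\mu(ng)=\mu(gn)=\mu(g)$.

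Next I compute factor sets. A direct calculation shows that $\cP_1$ has factor set $\alpha\cdot d\mu$ and $\cP_1'$ has factor set $\alpha'\cdot d\mu_H$, where $d\mu(g_1,g_2)=\mu(g_1)\mu(g_2)\mu(g_1g_2)^{-1}$. Since $\alpha|_{H\ti H}=\alpha'$ and $\mu|_H=\mu_H$, these restrictions again coincide on $H\ti H$, yielding $(G,N,\theta)\geqslant(H,M,\vhi)$ via $(\cP_1,\cP_1')$. For the ``$c$'' refinement, the containment $\C_G(N)\le H$ is unchanged, and for $c\in\C_G(N)$ the matrices $\cP(c)$ and $\cP'(c)$ are scalar by Schur's lemma (they intertwine the irreducibles $\cP|_N$ and $\cP'|_M$ with themselves), hence $\cP_1(c)=\mu(c)\cP(c)$ and $\cP_1'(c)=\mu(c)\cP'(c)$ remain scalar with equal scalars.

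For the block condition I need to control the correspondence $\sigma_J^{\mathrm{new}}$ induced by $(\cP_1,\cP_1')$. The key observation is that it coincides with the original $\sigma_J$: if $\mathcal Q_0$ is a projective representation of $J/N$ whose factor set is inverse to that of $\cP|_J$, then $\mathcal Q:=\mu^{-1}|_J\cdot\mathcal Q_0$ has factor set inverse to that of $\cP_1|_J$, and the scalar factor $\mu$ cancels inside the tensor-product traces, giving $\mathrm{trace}(\mathcal Q\otimes\cP_1|_J)=\mathrm{trace}(\mathcal Q_0\otimes\cP|_J)$ and the analogous identity on $J\cap H$. Hence $\sigma_J^{\mathrm{new}}=\sigma_J$, the defect group condition on $\bl(\vhi)$ is unaffected, and the block-induction equalities $\bl_J(\psi)=\bl_{J\cap H}(\sigma_J^{\mathrm{new}}(\psi))^J$ are inherited for free. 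The only mildly delicate step is this last one: one must keep careful track of how the parametrizing projective representation $\mathcal Q$ transforms when the factor set is twisted by the coboundary $d\mu$, so as to confirm that the Brauer-character bijection is literally unchanged rather than merely block-preserving up to a further twist.
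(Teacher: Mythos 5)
Your proof is correct and amounts to the direct verification the paper invokes (its proof reads only ``This can be checked by the definition directly''): you confirm the associated-representation and factor-set conditions, preserve the $\geqslant_c$ scalar condition via Schur's lemma, and — the crucial point — observe that $\sigma_J$ is literally unchanged because the coboundary twist by $\mu$ is absorbed into the auxiliary projective representation $\mathcal Q$, so the block-induction equalities transport for free. The one implicit hypothesis you use (and which the lemma must be read as assuming) is that $\overline\mu$ sends the identity coset to $1$, since otherwise $\cP_1|_N$ would afford $\mu(1)\theta$ rather than $\theta$; this is the standard convention and matches the source of such $\mu$ in \cite[Chap.~3, Thm.~5.7]{NT89}.
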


\begin{proof}
	This can be checked by the definition directly.
\end{proof}

\begin{lem}\label{lem:equ-transitive}
	Let $(G,N,\theta)\geqslant_b (H,M,\vhi)$ and 
	$(H,M,\vhi)\geqslant_b (H_1,M_1,\vhi_1).$
Assume that	$\bl(\vhi_1)$ has a defect group $D_1$ such that $\C_G(D_1)\le H_1$.
	Then $(G,N,\theta)\geqslant_b(H_1,M_1,\vhi_1).$	
\end{lem}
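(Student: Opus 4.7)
The plan is to align the two given pairs of projective representations on the common intermediate group $H$ via Lemma~\ref{lem:similar-proj}, and then combine the outer and innermost ones to realise $(G,N,\theta) \geqslant_b (H_1,M_1,\vhi_1)$. Let $(\cP, \mathcal{Q})$ give $(G,N,\theta) \geqslant_b (H,M,\vhi)$ and $(\mathcal{Q}', \mathcal{R})$ give $(H,M,\vhi) \geqslant_b (H_1,M_1,\vhi_1)$. Then $\mathcal{Q}$ and $\mathcal{Q}'$ are two projective representations of $H$ associated with the same triple $(H,M,\vhi)$; since $\vhi$ is absolutely irreducible, Schur's lemma supplies $B \in \GL_{\vhi(1)}(k)$ and a map $\bar{\mu}: H/M \to k^\ti$ with $\mathcal{Q}'(h) = \bar{\mu}(hM)\, B\, \mathcal{Q}(h)\, B^{-1}$ for every $h \in H$. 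Via the canonical isomorphism $H/M \cong G/N$ coming from $G=NH$ and $M = N\cap H$, extend $\bar{\mu}$ to $\bar{\mu}_G: G/N \to k^\ti$ and lift it to $\mu : G \to k^\ti$.

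By Lemma~\ref{lem:similar-proj}, applied with $A = I_{\theta(1)}$ and $A' = B$, the pair $(\mu\cP, \mathcal{Q}')$ also witnesses the first hypothesis. Setting $\cP^* := \mu\cP$, I claim that $(\cP^*, \mathcal{R})$ witnesses $(G,N,\theta) \geqslant_b (H_1,M_1,\vhi_1)$. The group equalities $G = NH = N(MH_1) = NH_1$ (using $M \le N$) and $M_1 = M \cap H_1 = (N \cap H) \cap H_1 = N \cap H_1$ are straightforward, and factor-set compatibility on $H_1 \times H_1$ follows from two successive restrictions: the factor set of $\cP^*$ agrees with that of $\mathcal{Q}'$ on $H \times H$, and the latter agrees with that of $\mathcal{R}$ on $H_1 \times H_1$. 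For the $\geqslant_c$ clause, the inclusions $\C_G(N) \le H$ (from the first hypothesis), $\C_H(M) \le H_1$ (from the second), and $\C_G(N) \le \C_H(M)$ (because $M \le N$) yield $\C_G(N) \le H_1$; the scalars attached to $\cP^*(c)$ and $\mathcal{R}(c)$ for $c \in \C_G(N)$ agree by composing the two given pointwise matchings.

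For the block compatibility, fix $N \le J \le G$ and $\psi \in \IBr(J \mid \theta)$, set $J_H := J \cap H$, and let $\sigma^{(1)}, \sigma^{(2)}$ denote the modular character-triple isomorphisms induced by the two (modified) pairs. The first hypothesis gives $\bl_J(\psi) = \bl_{J_H}(\sigma^{(1)}_J(\psi))^J$, and the second, applied to $M \le J_H \le H$, gives $\bl_{J_H}(\sigma^{(1)}_J(\psi)) = \bl_{J \cap H_1}(\sigma^{(2)}_{J_H}\sigma^{(1)}_J(\psi))^{J_H}$. Transitivity of block induction then chains these into $\bl_J(\psi) = \bl_{J \cap H_1}(\sigma^{(2)}_{J_H}\sigma^{(1)}_J(\psi))^J$, which is exactly the block condition for the composed isomorphism (and $\sigma^{(2)}_{J_H}\circ\sigma^{(1)}_J$ is readily verified from the trace formula in Theorem~\ref{thm:triple-isomor} to be the map induced by $(\cP^*,\mathcal{R})$). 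The main obstacle I anticipate is legitimising this invocation of transitivity of block induction, which requires the intermediate induced block to be well defined; this is precisely what the hypothesis $\C_G(D_1) \le H_1$ is designed to supply, via the standard fact that defect groups of blocks covering $\bl(\vhi_1)$ can be chosen inside $D_1$ up to conjugation.
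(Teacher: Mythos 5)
Your proof is correct and takes essentially the same route as the paper's: both align the two pairs of projective representations on the intermediate group $H$ via Lemma~\ref{lem:similar-proj} (the paper adjusts the inner pair $(\cP_1,\cP_1')$ to match $\cP'$, you adjust the outer pair to match $\mathcal{Q}'$ -- a mirror image of the same manoeuvre) and then verify the $\geqslant_b$ axioms for the spliced pair, including the chain of block inductions. One small slip in your closing remark: defect groups of blocks covering $\bl(\vhi_1)$ \emph{contain} $D_1$ up to conjugation, not the reverse; this corrected containment is in fact exactly what is needed, since for $D'$ a defect group of $\bl_{J\cap H_1}(\sigma^{(2)}_{J_H}\sigma^{(1)}_J(\psi))$ one then gets $\C_J(D')\le\C_J(D_1)\le J\cap H_1\le J_H$, which legitimises the transitivity of block induction.
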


\begin{proof}
	Suppose that $(G,N,\theta)\geqslant_b (H,M,\vhi)$ and 
	$(H,M,\vhi)\geqslant_b(H_1,M_1,\vhi_1)$ are given by
	the projective representations $(\cP,\cP')$ and $(\cP_1,\cP'_1)$  respectively.
	We may assume that $\cP'=\cP_1$.
	In fact,  by \cite[Chap.~3, Thm.~5.7]{NT89},
	there exist a matrix $A\in\GL_{\vhi(1)}(k)$ and a map $\overline \mu:H/M\to k^\ti$ such that $\cP'=\mu A \cP_1 A^{-1}$, where $\mu$ is the lift of $\overline \mu$ to $H$.	
	By Lemma~\ref{lem:similar-proj}, $(\mu A\cP_1A^{-1},\mu_{H_1} \cP'_1)$ also gives $(H,M,\vhi)\geqslant_b(H_1,M_1,\vhi_1)$.
	Then it can be checked directly that $(G,N,\theta)\geqslant_b(H_1,M_1,\vhi_1)$ by the definition. 
\end{proof}

\begin{rmk}
The relations $\geqslant$, $\geqslant_c$, $\geqslant_b$ for ordinary case can be defined in the same way (see  \cite{Sp18}) and the properties given above also holds.
\end{rmk}

	Let $G$ be a finite group and $B$ a block of $G$.
Assume that for $\Ga:=\Aut(G)_B$ we have
\begin{itemize}
	\item there is a $\Ga$-equivariant bijection $\Omega:\IBr(B)\to \Alp(B)$, and 
	\item for any $\psi\in\IBr(B)$ and $\overline{(Q,\vhi)}=\Omega(\psi)$,
	\begin{equation*}\label{equ:ibaw-bij}
	(G\rtimes \Ga_\psi,G,\psi)\geqslant_b((G\rtimes\Ga)_{Q,\vhi},\N_G(Q),\vhi^0).
		\addtocounter{thm}{1}\tag{\thethm}
	\end{equation*}
\end{itemize}
Then we say that $\Omega:\IBr(B)\to \Alp(B)$ is an \emph{iBAW-bijection} for $B$.

In \cite{Sp17} the inductive conditions for Alperin weight conjecture
was rephrased in terms of character triples and we recall as follows.
Let $S$ be a finite non-abelian simple group, and let $\ell$ be a prime dividing $|S|$.
Suppose that $G$ is the universal $\ell'$-covering group of $S$, and $B$ is an $\ell$-block of $G$.
By  \cite[Thm.~4.4]{Sp17}, $B$ is said to satisfy the \emph{inductive blockwise Alperin weight (BAW) condition}  if for $\Ga=\Aut(G)_B$,
\begin{itemize}
	\item there exists a $\Ga$-equivariant bijection $\Omega_B:\IBr(B)\to\Alp(B)$, and 
	\item for every $\psi\in\IBr(B)$ and $\overline{(Q,\vhi)}=\Omega_B(\psi)$, one has 
	\begin{equation*}	\label{iso:quotient}
	(G/Z\rtimes \Ga_\psi,G/Z,\overline \psi)\geqslant_b( (G/Z\rtimes \Ga)_{Q,\vhi}, \N_G(Q)/Z,\overline{\vhi^0} ),
	\addtocounter{thm}{1}\tag{\thethm}
	\end{equation*}
	where $Z=\ker(\psi)\cap Z(G)$, and $\overline\psi$ and $\overline{\vhi^0}$ lift to $\psi$ and $\vhi^0$ respectively.
\end{itemize}
We also say that $B$ is  \emph{BAW-good} if $B$ satisfies the inductive BAW condition.
We say the simple group \emph{$S$ is BAW-good for the prime $\ell$} if every $\ell$-block of $G$ is BAW-good.
We say the simple group  \emph{$S$ satisfies the inductive BAW condition} (or \emph{$S$ is BAW-good} ) if it is BAW-good for any prime dividing $|S|$. \label{def-iBAW}

We note that by a similar result as \cite[Lemma~3.2]{NS14} for Brauer characters,
(\ref{equ:ibaw-bij}) implies (\ref{iso:quotient}). Thus
if $B$ is BAW-good, then there is an iBAW-bijection for $B$.

\subsection{An isomorphism  between ordinary-modular character triples}

We first  recall the Dade--Glauberman--Nagao correspondence from \cite{NS14}.
Let $N\unlhd M$ be finite groups such that $M/N$ is an $\ell$-group and $D_0\subseteq \Z(M)$ is  a normal $\ell$-subgroup of $N$.
Suppose that $b$ is an $M$-invariant block of $N$ which has  defect group $D_0$.
Let $B$ be the (unique) block of $M$ covering $b$ and
let $B'$ be the Brauer correspondent of $B$.
Suppose that $D$ is a defect group of $B$.
Denote $L:=\N_N(D)$ and let $b'$ be the unique block of $L$ covered by $B'$.
According to \cite[Thm.~5.2]{NS14}, there is a natural bijection $\pi_D:\Irr_D(b)\to \Irr_D(b')$, which is
called the \emph{Dade--Glauberman--Nagao (DGN) correspondence}.
Here, $\Irr_D(b)$ denotes the set of $D$-invariant irreducible characters in $b$.

\begin{lem}\label{lem:Lad10}
	Suppose that $N\unlhd G$ and, in addition, that $M/N$ is a normal $\ell$-subgroup of $G/N$.
	Let $\theta\in\dz(N)$ with $G_\theta=G$. 
	Let $D$ be a defect group of the unique block of $M$ covering $\bl(\theta)$ and $\theta_0:=\pi_D(\theta)$ be the DGN-correspondent of $\theta$.
	Then there exists an $\N_G(D)$-equivariant bijection
	$$\Pi:\Irr(M\mid\theta)\to\Irr(\N_M(D)\mid \theta_0)$$ such that 
	$(G_{\theta'},M,\theta')\geqslant_b(\N_G(D)_{\theta'},\N_M(D),\Pi(\theta'))$
	for every $\theta'\in\Irr(M\mid\theta)$.
\end{lem}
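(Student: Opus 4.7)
My plan would be to construct $\Pi$ via Clifford-theoretic parametrizations of $\Irr(M\mid\theta)$ and $\Irr(\N_M(D)\mid\theta_0)$ using projective representations with matched factor sets, then verify the block-isomorphism on each $\theta'$ by tracking everything through a source-algebra realization of the DGN correspondence. First I would record the structural consequences of the hypotheses. Since $\theta\in\dz(N)$, the block $\bl(\theta)$ has trivial defect; since $M/N$ is an $\ell$-group and $D$ is a defect group of the unique block $B$ of $M$ covering $\bl(\theta)$, one has $D\cap N=1$ and $|D|=|M/N|$, so $D$ is a complement of $N$ in $M$ and the natural map $\N_M(D)/\N_N(D)\to M/N$ is an isomorphism. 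Moreover, since $B$ is $G$-invariant (because $\theta$ is), the Frattini argument for blocks gives $G=M\cdot\N_G(D)$; applied to the unique block of $M$ containing $\theta'$ it yields $G_{\theta'}=M\cdot\N_G(D)_{\theta'}$ for every $\theta'\in\Irr(M\mid\theta)$.

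Next, I fix a projective representation $\cP$ of $G$ associated to $(G,N,\theta)$ with factor set $\alpha$ inflated from $G/N\times G/N\to k^\times$. By standard Clifford theory with projective representations, $\Irr(M\mid\theta)$ is in bijection with the irreducible $\alpha|_{M/N\times M/N}$-projective characters of $M/N$. The crucial step is to construct a projective representation $\cP_0$ of $\N_G(D)$ associated to $(\N_G(D),\N_N(D),\theta_0)$ whose factor set agrees with $\alpha$ under $\N_M(D)/\N_N(D)\cong M/N$. For this I would use the explicit realization of $\pi_D$ via the Brauer homomorphism on a source algebra of $B$: starting from an $\cO M$-module affording $\cP|_M$ and passing through $\Br_D$ applied to a source idempotent, one obtains an $\cO\N_M(D)$-module affording a projective representation of $\N_M(D)$ whose restriction to $\N_N(D)$ affords $\theta_0$ and whose factor set is the restriction of $\alpha$. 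This module extends to $\N_G(D)$ because $\theta_0$ is $\N_G(D)$-invariant (the DGN correspondence being $\N_G(D)$-equivariant and $\theta$ being $G$-invariant). I then define $\Pi(\theta')$ to be the character of $\N_M(D)$ corresponding to the same $\alpha$-projective character of $M/N$ as $\theta'$; equivariance of $\Pi$ under $\N_G(D)$ is automatic from this construction.

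With $(\cP|_{G_{\theta'}},\cP_0|_{\N_G(D)_{\theta'}})$ in hand, the ordinary-character analog of Theorem~\ref{thm:triple-isomor} yields a strong isomorphism between the two character triples. To upgrade this to $\geqslant_b$, I would verify the three conditions in the definition: $G_{\theta'}=M\cdot\N_G(D)_{\theta'}$ (already noted above), $\C_G(N)\le\N_G(D)$ (because $\C_G(N)$ centralizes $M$ and hence stabilizes $D$), and the scalar equality on $\C_G(D)$ (because the Brauer homomorphism acts as the identity on scalars coming from $\cP(c)$ for $c\in\C_G(D)\le\C_G(N)$). The final ingredient, block-matching $\bl_J(\psi)=\bl_{J\cap\N_G(D)}(\sigma_J(\psi))^J$ for every $N\le J\le G_{\theta'}$ and $\psi\in\Irr(J\mid\theta')$, follows from Brauer's first main theorem applied at each intermediate level, together with the fact that $\Br_D$ preserves block decompositions on the relevant fixed subalgebras.

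The hard part will be arranging \emph{simultaneously} the factor-set agreement of $\cP$ and $\cP_0$ and the block-matching condition at every intermediate subgroup $J$. This forces one to work not with the abstract DGN bijection at the level of characters but with a concrete source-algebra realization, and to keep track of central scalars throughout. Once this compatibility has been established for one carefully chosen source idempotent, the remainder reduces to equivariance, the transitivity properties of $\geqslant_b$ (Lemma~\ref{lem:equ-transitive}), and routine Clifford-theoretic bookkeeping.
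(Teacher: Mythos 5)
The paper's proof of this lemma is a one-liner: after noting $G=N\cdot\N_G(D)$ (which follows from a Frattini argument once one sees $D$ is a complement of $N$ in $M$), it simply cites \cite[Thm.~5.13]{NS14}, which is exactly the statement that the DGN bijection $\pi_D$ extends to an equivariant bijection on $\Irr(M\mid\theta)$ satisfying the block-isomorphism $\geqslant_b$. Your proposal goes a genuinely different route: you attempt a from-scratch reconstruction of that theorem via projective representations and a source-algebra realization of the Brauer homomorphism.

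Your structural preliminaries are correct and match the implicit setup in the paper: $D\cap N=1$, $|D|=|M/N|$, $D$ complements $N$ in $M$, and $G=M\cdot\N_G(D)$ by the Frattini argument (equivalently $G=N\cdot\N_G(D)$, since $M=ND$). However, the part you yourself flag as ``the hard part'' --- constructing $\cP_0$ from a source idempotent so that it simultaneously has factor set $\alpha|_{\N_M(D)/\N_N(D)\times\N_M(D)/\N_N(D)}$, restricts to $\theta_0$ on $\N_N(D)$, and satisfies the block-matching $\bl_J(\psi)=\bl_{J\cap\N_G(D)}(\sigma_J(\psi))^J$ at every intermediate $J$ --- is not actually carried out; it is deferred as a roadmap. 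That is precisely the substantive content of \cite[Thm.~5.13]{NS14}, so what you have is a plausible outline of how one might reprove that theorem, not an independent proof of the lemma. In particular, the appeal to ``Brauer's first main theorem at each intermediate level'' to handle the block-matching condition is too vague: the block induction condition in $\geqslant_b$ is quite delicate for the intermediate groups $J$, and Navarro--Sp\"ath devote nontrivial effort (their \S5) to precisely this. If your intent was to prove the lemma assuming known results, it would be cleaner, and matches the paper, to cite \cite[Thm.~5.13]{NS14} directly; if your intent was to reprove it, the proposal has a genuine gap at the step you already identified as hard.
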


\begin{proof}
	$G=N\cdot\N_G(D)$ is obvious and
	this assertion follows by \cite[Thm.~5.13]{NS14}.
\end{proof}

We give the following result on extension of representations.

\begin{lem}\label{lem:extension-ring}
Let $G$ be a finite group and $N\unlhd G$.
Let $\theta\in\dz(N)$ such that $G_\theta=G$ and $\theta$ extends to $G$.
Let $\cD:N\to\GL_{\theta(1)}(\cO)$ be an $\cO$-representation of $N$ affording $\theta$.
Then 
\begin{enumerate}[\rm(i)]
\item there is an $\cO$-representation $\tcD$ of $G$ with $\tcD|_N=\cD$, and
\item if $\tcD':\G\to\GL_{\theta(1)}(K)$ is a $K$-representation of $G$ with $\tcD'|_{N}=\cD$, then $\tcD'(G)\subseteq \GL_{\theta(1)}(\cO)$.
\end{enumerate}
\end{lem}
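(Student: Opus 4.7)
My plan is to prove (ii) first and then obtain (i) by a short conjugation argument. Throughout, identify $V = K^{\theta(1)}$ with its standard $\cO$-lattice $L_\cO = \cO^{\theta(1)} \subseteq V$, so that $\End_\cO(L_\cO) = M_{\theta(1)}(\cO)$ sits inside $\End_K(V) = M_{\theta(1)}(K)$. The goal of (ii) is then to show that $\tcD'(g)$ preserves $L_\cO$, i.e.\ lies in $\GL_{\theta(1)}(\cO)$, for every $g\in G$.

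The central observation is that because $\theta\in\dz(N)$, the quotient $|N|/\theta(1)$ is an $\ell'$-integer and hence a unit of $\cO$; therefore the block idempotent $e_\theta = \tfrac{\theta(1)}{|N|}\sum_{n\in N}\theta(n^{-1})\,n$ actually lies in $\cO N$, and $\cD$ identifies $\cO N e_\theta$ with the full matrix algebra $M_{\theta(1)}(\cO)$. For an arbitrary $g\in G$, the equality $\tcD'|_N=\cD$ yields $\tcD'(g)\,\cD(n)\,\tcD'(g)^{-1} = \cD(gng^{-1})\in M_{\theta(1)}(\cO)$ for every $n\in N$, so conjugation by $\tcD'(g)$ induces an $\cO$-algebra automorphism of $M_{\theta(1)}(\cO)$. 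By the Skolem--Noether theorem for matrix algebras over the local ring $\cO$, every such automorphism is inner, so there exists $y_g \in \GL_{\theta(1)}(\cO)$ realising the same conjugation; then $y_g^{-1}\tcD'(g)$ centralizes $M_{\theta(1)}(\cO)$ in $M_{\theta(1)}(K)$, which is just $K\cdot I$, so $\tcD'(g) = \lambda_g y_g$ for some $\lambda_g \in K^\times$. To pin down $\lambda_g$ I would apply the determinant: $\det\circ\tcD' : G \to K^\times$ is a linear character of the finite group $G$, so its values are roots of unity, hence lie in $\cO^\times$. Combined with $\det y_g \in \cO^\times$, the identity $\lambda_g^{\theta(1)}\det y_g = \det\tcD'(g) \in \cO^\times$ forces $\lambda_g \in \cO^\times$, and therefore $\tcD'(g) \in \GL_{\theta(1)}(\cO)$, completing (ii).

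For (i), the hypothesis that $\theta$ extends to $G$ provides a $K$-representation $\cE : G \to \GL_{\theta(1)}(K)$ affording an extension of $\theta$; since $\cE|_N$ and $\cD$ both afford $\theta$ over $K$, Schur's lemma yields $A \in \GL_{\theta(1)}(K)$ with $\cE|_N = A\cD A^{-1}$, so the conjugate $\tcD := A^{-1}\cE A$ is a $K$-representation of $G$ restricting to $\cD$ on $N$, and (ii) then forces $\tcD(G) \subseteq \GL_{\theta(1)}(\cO)$. The only step I expect to be non-routine is the Skolem--Noether assertion for $M_n(\cO)$: it is standard (any such automorphism permutes the rank-one progenerators of $M_n(\cO)$, which are all isomorphic, and so must be inner), but it must be invoked explicitly because $\cO$ is not a field and one may not simply pass to the fraction field before arguing.
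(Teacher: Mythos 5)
Your proof is correct, and it genuinely reorganizes the argument compared with the paper's. The paper goes $(\mathrm{i})\Rightarrow(\mathrm{ii})$: for (i), it takes an $\cO$-representation $\hcD$ of $G$ affording an extension $\ttheta$, and invokes the defect-zero hypothesis in the form \emph{any two $\cO$-representations of $N$ affording $\theta$ are $\GL_{\theta(1)}(\cO)$-conjugate} (the uniqueness of $\cO$-lattices over $\cO Ne_\theta\cong M_{\theta(1)}(\cO)$) to find $O\in\GL_{\theta(1)}(\cO)$ with $\tcD:=O\hcD O^{-1}$ restricting to $\cD$; for (ii), it then writes $\tcD'=\lambda\tcD$ with $\lambda\in\Irr(G/N)$ a linear character, whose values are roots of unity and hence in $\cO$. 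You instead prove (ii) directly by invoking the same structural fact $\cO Ne_\theta\cong M_{\theta(1)}(\cO)$ in its ``automorphism'' rather than ``lattice'' incarnation (Skolem--Noether over the local ring $\cO$), and then normalize the scalar $\lambda_g$ by a determinant argument; (i) drops out by conjugating a $K$-extension into the right position and applying (ii). Both arguments rest on the same Morita-theoretic fact about $M_n(\cO)$, but the paper's route via (i) is a bit shorter and avoids the determinant/valuation step, while yours makes (ii) self-contained and does not require constructing a reference $\cO$-extension of $G$ first. One stylistic note: your determinant step $\lambda_g^{\theta(1)}\in\cO^\times\Rightarrow\lambda_g\in\cO^\times$ uses that $\cO$ is a discrete valuation ring; this is true in the present setting and worth stating explicitly.
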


\begin{proof}
(i) Let $\ttheta\in\Irr(G)$ be an extension of $\theta$ and let
 $\hcD$ be an $\cO$-representation of $G$ affording $\ttheta$.
Then $\hcD|_N$ afford $\theta$.
Since $\theta$ is of defect zero,
there exists $O\in\GL_{\theta(1)}(\cO)$ such that $O(\hcD|_N) O^{-1}=\cD$.
Let $\tcD=O\hcD O^{-1}$.
Then $\tcD$ is an $\cO$-representation of $G$ with
$\tcD|_N=\cD$.

(ii)  Let $\tcD':\G\to\GL_{\theta(1)}(K)$ be a $K$-representation of $G$ with $\tcD'|_{N}=\cD$.
Then there exists a linear character $\lambda\in\Irr(G/N)$ such that $\tcD'=\lambda\tcD$.
So $\tcD'$ is also an $\cO$-representation of $G$ and this proves (ii).
\end{proof}

We will make use of the following lemma.

\begin{lem}\label{lem-proj-cO}
Let $(G,N,\theta)$ be a character triple and let $\cP$ be a projective representation of $G$ over $K$ which is associated to $(G,N,\theta)$ and has factor set $\alpha$.
If $\alpha(g,g')\in\cO$ for every $g,g'\in G$, then there exists $A\in\GL_{\theta(1)}(K)$ such that $A\cP(g)A^{-1}\in\GL_{\theta(1)}(\cO)$ for every $g\in G$.
\end{lem}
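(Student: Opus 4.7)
The plan is to construct a full $\cO$-lattice $M\subseteq V := K^{\theta(1)}$ stable under $\cP(g)$ for every $g\in G$. Given such an $M$, letting $A\in\GL_{\theta(1)}(K)$ be the matrix whose columns form an $\cO$-basis of $M$ gives $M = A\cdot\cO^{\theta(1)}$, and the stability translates directly into $A^{-1}\cP(g)A\in\GL_{\theta(1)}(\cO)$. The construction proceeds by a Maschke-type averaging argument. Because $\cP|_N$ affords the ordinary character $\theta$, the factor set $\alpha$ is trivial on $N\times N$, so $\cP|_N$ is an honest $K$-representation of the finite group $N$, and a standard argument (sum any $\cO$-lattice over $\cP(N)$) yields an $\cO$-lattice $L_0\subseteq V$ stable under $\cP(N)$.

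I would then choose a transversal $[G/N]$ of $N$ in $G$ and set
\[
M \;:=\; \sum_{g\in[G/N]} \cP(g)\,L_0,
\]
a full $\cO$-lattice in $V$ as a finite sum of full lattices. Using $\alpha(g,n)=\alpha(n,g)=1$ for $n\in N$ and the $N$-invariance of $L_0$, the summand $\cP(g)L_0$ depends only on the coset $gN$, so $M$ is well-defined. The core verification is that $\cP(h)M = M$ for every $h\in G$. For the inclusion $\cP(h)M\subseteq M$, expand $\cP(h)M = \sum_g \alpha(h,g)\cP(hg)L_0$, write $hg = g'n$ with $g'$ the transversal representative of the coset $hgN$ and $n\in N$, so that $\cP(hg)L_0 = \cP(g')\cP(n)L_0 = \cP(g')L_0$, and use $\alpha(h,g)\in\cO$ to conclude each term lies in $\cP(g')L_0\subseteq M$.

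The reverse inclusion $M\subseteq \cP(h)M$ is the main obstacle. Applying the same argument to $h^{-1}$ yields $\cP(h^{-1})M\subseteq M$, and combining with $\cP(h)\cP(h^{-1}) = \alpha(h,h^{-1})I$ only gives the chain $\alpha(h,h^{-1})M\subseteq \cP(h)M\subseteq M$. To upgrade this to equality one needs $\alpha(h,h^{-1})\in\cO^\times$, not merely in $\cO$. The standard route here is to invoke the fact that $[\alpha]\in H^2(G/N,K^\times)$ is torsion of order dividing $|G/N|$, so after a preliminary modification $\cP\leadsto \lambda\cP$ by a scalar cochain $\lambda:G\to K^\times$ one may assume $\alpha$ takes values in roots of unity of $K$, which lie automatically in $\cO^\times$. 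The delicate verification is that the hypothesis $\alpha\in\cO$, together with the irreducibility of $\cP|_N$, forces $\lambda(g)\in\cO^\times$ for every $g\in G$, so that both the hypothesis and the conclusion transfer between $\cP$ and $\lambda\cP$. Once $\cP(h)M = M$ is secured for every $h\in G$, the basis-change $A$ of the first paragraph completes the proof.
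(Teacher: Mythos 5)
Your construction follows the same route as the paper's proof. The paper cites \cite[Chap.~2, Thm.~1.6]{NT89} applied to the twisted group algebra $\cO^\alpha G$ (which is an $\cO$-algebra precisely because $\alpha$ has values in $\cO$), and the proof of that theorem is exactly the ``sum the $\cP(G)$-translates of a lattice'' argument you carry out, except that one need not first arrange $L_0$ to be $\cP(N)$-stable: for any full $\cO$-lattice $L$ in $V$, the $\cO$-module $\sum_{g\in G}\cP(g)L$ is finitely generated, full, and satisfies $\cP(h)M\subseteq M$. So up to and including the inclusion $\cP(h)M\subseteq M$, you and the paper coincide.

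The issue you flag in the last step, however, is genuine, and your proposed repair does not close it. If you replace $\cP$ by $\lambda\cP$ so that the new factor set $\alpha'=(\delta\lambda)\alpha$ is a root-of-unity cocycle, then transferring the conclusion back to $\cP$ requires $\lambda(g)\in\cO^\times$ for all $g$; but $\delta\lambda=\alpha'/\alpha$ has $\alpha'\in\cO^\times$ and $\alpha$ only in $\cO$, so some coboundary values have negative valuation exactly when some $\alpha(g,g')$ is a non-unit, and the ``delicate verification'' fails. Indeed, with only $\alpha(g,g')\in\cO$ the conclusion $A\cP(g)A^{-1}\in\GL_{\theta(1)}(\cO)$ cannot hold in general: taking determinants, $\det\cP(g)\cdot\det\cP(g^{-1})=\alpha(g,g^{-1})^{\theta(1)}$, and conjugation by $A$ leaves these determinants unchanged, so they cannot all be units unless every $\alpha(g,g^{-1})$ is a unit. (A $1\times 1$ example with $N=1$, $G$ of order two generated by $\sigma$, and $\alpha(\sigma,\sigma)$ a uniformiser makes this explicit.) The hypothesis actually satisfied at the point of application inside the proof of Proposition~\ref{lem:Ladisch} is $\alpha(g,g')\in\cO^\times$ --- there $\alpha$ is built from values of linear characters of finite groups, hence roots of unity --- and under that hypothesis each basis element $u_g$ is a unit of $\cO^\alpha G$, or in your notation the chain $\alpha(h,h^{-1})M\subseteq\cP(h)M\subseteq M$ with $\alpha(h,h^{-1})\in\cO^\times$ forces $\cP(h)M=M$ at once. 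So your argument then finishes directly, with no cohomological detour; you are correct that this reverse inclusion is a real point which the paper's terse proof passes over.
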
 

\begin{proof}
Denote by the  $K^\alpha G$ the generalized group ring of $G$ with factor set $\alpha$ defined as in \cite[p.~214]{NT89}.	
	Then $\mathcal P$ induces a linear $K$-representation of the $K$-algebra $K^\alpha G$.
	This gives a linear $K$-representation of the $\cO$-algebra $\cO^\alpha G$,
	and by  \cite[Chap. 2, Thm.~1.6]{NT89},
	it is $K$-equivalent to a  linear $\cO$-representation of $\cO^\alpha G$. 
	From this, there exists $A\in\GL_{\theta(1)}(K)$ such that $A\cP(g)A^{-1}\in\GL_{\theta(1)}(\cO)$ for every $g\in G$
\end{proof}

Now we show the following block isomorphism between modular character triples.

\begin{prop}\label{lem:Ladisch}
	Suppose that $N\unlhd G$ and, in addition, that $M/N$ is a normal $\ell$-subgroup of $G/N$.
	Let $\theta\in\dz(N)$ with $G_\theta=G$. 
	Assume that $G=XL$, where $M\subseteq X\unlhd G$ and $L\le G$ satisfy that $X\cap L=N$ and
	the quotient group $X/N$ is abelian, and $\theta$ extends to both  $X$ and $L$.	
	Let $D$ be a defect group of the unique block of $M$ covering $\bl(\theta)$ and $\theta_0:=\pi_D(\theta)$ be the DGN-correspondent of $\theta$.
	Then 
 $$(G, M, \widehat \theta)\geqslant_b(\N_G(D), \N_M(D), \widehat \theta')$$ for some $\widehat \theta\in\IBr(M\mid\theta^0)$ and $\widehat \theta'\in \IBr(\N_M(D)\mid(\theta_0)^0)$.	

\end{prop}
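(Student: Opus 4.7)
The strategy is to lift the modular triple to characteristic zero, invoke the ordinary block isomorphism given by Lemma~\ref{lem:Lad10}, and then reduce modulo $\ell$, exploiting the uniqueness of extensions that arises from $M/N$ being an $\ell$-group. Since $M/N$ is an $\ell$-group and $\theta^0$ is $G$-invariant, Clifford theory forces $|\IBr(M\mid\theta^0)|=1$, so $\widehat\theta$ is the unique extension of $\theta^0$ to $M$ and is $G$-invariant. Likewise $\N_M(D)/\N_N(D)$ embeds as an $\ell$-subgroup into $M/N$, and $\widehat\theta'$ is the unique extension of $\theta_0^0$ to $\N_M(D)$.

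Using the hypothesis that $\theta$ extends to $X\supseteq M$, pick an extension $\widetilde\theta_X\in\Irr(X)$ of $\theta$ and set $\widetilde\theta:=\widetilde\theta_X|_M\in\Irr(M\mid\theta)$; then $\widetilde\theta$ extends $\theta$, and by the uniqueness above $\widetilde\theta^0=\widehat\theta$. Applying Lemma~\ref{lem:Lad10} to $\widetilde\theta$ yields $\widetilde\theta_0:=\Pi(\widetilde\theta)\in\Irr(\N_M(D)\mid\theta_0)$ together with the ordinary block isomorphism
\[
(G_{\widetilde\theta}, M, \widetilde\theta) \geqslant_b (\N_G(D)_{\widetilde\theta}, \N_M(D), \widetilde\theta_0).
\]
Since $\widetilde\theta_0$ is an extension of $\theta_0$ (the DGN correspondence sends extensions to extensions, both sides parameterised by linear characters of $M/N\cong \N_M(D)/\N_N(D)$), the same uniqueness argument on the normaliser side gives $\widetilde\theta_0^0=\widehat\theta'$. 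By Lemma~\ref{lem-proj-cO}, the projective representations realising this ordinary isomorphism can be chosen over $\cO$ with factor sets in $\cO^{\times}$, and reducing modulo $\ell$ gives the modular block isomorphism
\[
(G_{\widetilde\theta}, M, \widehat\theta) \geqslant_b (\N_G(D)_{\widetilde\theta}, \N_M(D), \widehat\theta');
\]
the block condition transfers because the block of a Brauer character equals the block of any of its ordinary lifts.

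To promote this to a block isomorphism over the full groups $G$ and $\N_G(D)$, note that $\widehat\theta$ is $G$-invariant, so the $G$-conjugates of $\widetilde\theta$ are further ordinary lifts of $\widehat\theta$, all differing from $\widetilde\theta$ by Gallagher twists by linear characters of $M/N$ (of $\ell$-power order since $M/N$ is an $\ell$-group, which is why the twists vanish modulo $\ell$). The hypothesis $G=XL$, combined with the extension of $\theta$ to $L$, supplies a set of coset representatives for $G/G_{\widetilde\theta}$ lying in $L$ and permits extending the projective representation $\cP$ from $G_{\widetilde\theta}$ to $G$. A parallel Frattini-type decomposition $\N_G(D)=\N_X(D)\cdot\N_L(D)$---which follows from the $M$-conjugacy of defect groups of the unique block of $M$ covering $\bl(\theta)$---allows the same construction for $\cP'$, and Lemma~\ref{lem:similar-proj} is then used to normalise the two projective representations so that their factor sets agree on $\N_G(D)\times\N_G(D)$.

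\textbf{Main obstacle.} The technical heart of the argument is this last extension step: controlling the factor sets of $\cP$ and $\cP'$ on cosets outside the $\widetilde\theta$-stabilisers. It depends both on the abelian structure of $X/N$---which ensures that the relevant Gallagher cocycle is governed by an abelian group---and on the simultaneous extension of $\theta$ to $L$, which provides a concrete section with which to extend $\cP$ compatibly with $\cP'$. Without these features the naive construction via Gallagher twists can fail to align the factor sets over the defect-group normalisers.
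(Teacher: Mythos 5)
Your approach diverges from the paper's at a crucial point and leaves a genuine gap at exactly the place you flag as the ``main obstacle.'' The paper never needs to extend a block isomorphism from a proper subgroup of $G$ up to $G$, because it chooses the ordinary lift $\theta'\in\Irr(M\mid\theta)$ to be \emph{$G$-invariant from the outset}: since $\N_M(D)=\C_M(D)D$, the block $B'$ of $\N_M(D)$ covering $\bl(\theta_0)$ is self-centralizing, so it has a canonical character, which is automatically $\N_G(D)$-invariant; taking $\theta'$ to be the $\Pi$-preimage of that canonical character and using the $\N_G(D)$-equivariance of $\Pi$ together with $G=M\N_G(D)$ (Frattini) shows $\theta'$ is $G$-invariant. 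The extendibility of $\theta'$ to $X$ and to $ML$ then follows from Isaacs' restriction theorem (\cite[Cor.~4.2]{Is84}) applied to the hypothesis that $\theta$ extends to $X$ and $L$, and the projective representations can be built directly over all of $G$ and $\N_G(D)$. By contrast, you take $\widetilde\theta=\widetilde\theta_X|_M$, which has no reason to be $G$-invariant, so Lemma~\ref{lem:Lad10} only gives an ordinary $\geqslant_b$ over $G_{\widetilde\theta}$, and you then have to push this up to $G$.

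That push-up is where your argument is incomplete. Asserting that $G=XL$ ``supplies coset representatives for $G/G_{\widetilde\theta}$ lying in $L$'' and that one can then extend $\cP$ compatibly, and that a ``Frattini-type decomposition $\N_G(D)=\N_X(D)\N_L(D)$'' gives the same on the normaliser side, is not a proof: one has to construct the extended projective representations explicitly, show they are still associated to the correct modular triples, compute that their factor sets agree on $\N_G(D)\times\N_G(D)$, verify that scalars at $\C_G(M)$ still match, and re-verify the block-induction condition on the new cosets. None of this is automatic; it is precisely the content one must supply, and nothing in Lemma~\ref{lem:similar-proj} (which only says that twisting and conjugating preserves an already-established $\geqslant_b$) accomplishes it. Also, $\N_G(D)=\N_X(D)\N_L(D)$ does not obviously follow from conjugacy of defect groups in $M$ as claimed. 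There is a second, smaller issue: to reduce the ordinary $\geqslant_b$ mod $\ell$ and conclude the modular $\geqslant_b$, one needs to verify that the relevant ordinary-modular character triple isomorphism preserves blocks; the paper does this via \cite[Lemma 2.6]{Sp13}, whereas your appeal to ``the block of a Brauer character equals the block of any of its ordinary lifts'' is too coarse to carry the $\bl_J(\psi)=\bl_{J\cap H}(\sigma_J(\psi))^J$ condition on all intermediate subgroups $J$.

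In short: the idea of lifting to characteristic zero, invoking Lemma~\ref{lem:Lad10}, and reducing is correct and is what the paper does, and your observation that $\widehat\theta$ and $\widehat\theta'$ are the unique extensions of $\theta^0$ and $\theta_0^0$ is right. But you should replace your choice of $\widetilde\theta$ by the $G$-invariant ordinary lift (the preimage of the canonical character of the self-centralizing block $B'$) so that the ordinary isomorphism already lives over $G$ and $\N_G(D)$, eliminating the unresolved extension step entirely.
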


\begin{proof}
First by Lemma~\ref{lem:Lad10}, there exists an $\N_G(D)$-equivariant bijection
$$\Pi:\Irr(M\mid\theta)\to\Irr(\N_M(D)\mid \theta_0)$$ such that 
$(G_{\theta'},M,\theta')\geqslant_b(\N_G(D)_{\theta'},\N_M(D),\Pi(\theta'))$
for every $\theta'\in\Irr(M\mid\theta)$.
Let $B$ be the block of $M$ covering $\bl(\theta)$ and $B'$ the block of $\N_M(D)$ covering $\bl(\theta_0)$.
Since $D$ is the defect group of $B'$ and
$\N_M(D)=\C_N(D)D=\C_M(D)D$, we know that $B'$ is of self-centralizing.
We fix $\theta'\in\Irr(M\mid\theta)$ and $\theta_0'=\Pi(\theta')$ such that $\theta_0'$ is the canonical 	character of $B'$.
Then $\theta'_0$ is $\N_G(D)$-invariant, while $\theta'$ is $G$-invariant.
In addition, $\theta_0'$ is an extension of $\theta_0$, while $\theta'$ is an extension of $\theta$.
Now
$\theta$ extends to both  $X$ and $L$,
by \cite[Cor.~4.2]{Is84}, $\theta'$ extends to both $X$ and $ML$.
Thus $\theta'_0$ extends to both $\N_X(D)$ and $\N_{ML}(D)$.
Let $(\cP_0,\cP_0')$ be projective representations (over $\mathbb C$)
giving  $(G,M,\theta')\geqslant_b(\N_G(D),\N_M(D),\theta'_0)$ and
let $\sigma$ be the isomorphism between
the character triples 
$(G, M, \theta')$ and $(\N_G(D), \N_M(D), \theta_0')$ given by $(\cP_0,\cP_0')$.

Let $\psi$ be an extension of $\theta'$ to $X$, and $\chi$ be an extension of $\theta'$ to $ML$.
	Let $\cD$ be an $\cO$-representation of $M$ affording $\theta'$.
	By Lemma~\ref{lem:extension-ring},
there exist an $\cO$-representation $\hcD$ of $X$ affording $\psi$ and $\tcD$ an $\cO$-representation of $L$ affording $\chi$ such that $\hcD|_M=\tcD|_M=\cD$.
By the proof of \cite[Lemma 2.11]{Sp12}, we define a projective representation $\cP:G\to\GL_{\theta'(1)}(K)$ of $G$
satisfies that 
$$\cP(xl)=\hcD(x)\tcD(l)\ \text{for}\ x\in X \ \text{and}\ l\in ML.$$
Then $\cP|_{M}=\cD$.
According to \cite[Chap. 3, Thm.~5.7]{NT89},  there  $\cP=\mu A\cP_0A^{-1}$, where  $A\in\GL_{\theta'(1)}(\mathbb C)$
and $\mu$ is the lift of a map $\overline\mu:G/M\to\mathbb C^\ti$.
%In particular, $\alpha_0|_{X\ti X}=\Delta(\mu|_X)$ and $\alpha_0|_{ML\ti ML}=\Delta(\mu|_{ML})$.

Note that $\theta'_0$ extends to both $\N_X(D)$ and $\N_{ML}(D)$.
Let $\psi_0$ be an extension of $\theta'_0$ to $\N_X(D)$ such that $\psi_0=\sigma_X(\psi)$ and $\chi_0$ be an extension of $\theta'_0$ to $\N_{ML}(D)$ such that $\chi_0=\sigma_{ML}(\chi)$.
Let $\cP''=\mu|_{\N_G(D)}\cP'_0$.
Then by the definition of $\sigma$, $\cP''|_{\N_X(D)}$ is a $\mathbb C$-representation of $\N_X(D)$ affording $\psi_0$ and $\cP''|_{\N_{ML}(D)}$ is a $\mathbb C$-representation of $\N_{ML}(D)$ affording $\chi_0$.
In addition, $\cP''|_{\N_M(D)}$ affords $\theta'_0$.
 Let $\cD'$ be an $\cO$-representation of $\N_M(D)$ affording $\theta_0'$.
Then there exists $A'\in\GL_{\theta_0'(1)}(\mathbb C)$ such that $\cD'=A'\cP''|_{\N_M(D)}{A'}^{-1}$.
Thus $A'\cP''|_{\N_X(D)}{A'}^{-1}$ and $A'\cP''|_{\N_{ML}(D)}{A'}^{-1}$ are extensions of $\cD'$ and by Lemma~\ref{lem:extension-ring}
we know that they are $\cO$-representations.
Let $\hcD'=A'\cP''|_{\N_X(D)}{A'}^{-1}$ and 
$\tcD'=A'\cP''|_{\N_{ML}(D)}{A'}^{-1}$.
Then we can define a projective representation 
$\cP':\N_G(D)\to\GL_{\theta_0'(1)}(\cO)$ by
$$\cP'(xl)=\hcD'(x)\tcD'(l)\ \text{for}\ x\in \N_X(D)\ \text{and}\ l\in \N_{ML}(D).$$
From this, it can be checked directly that $(\cP,\cP')$ also gives $$(G,M,\theta')\geqslant_b(\N_G(D),\N_M(D),\theta'_0).$$
by a similar argument as in \cite[Lemma 2.13]{Sp12}.

Denote by $^*:\cO\to k$ the canonical epimorphism.
Now $\widehat \theta={\theta'}^0$ and $\widehat \theta'=(\theta'_0)^0$.
In this way $\cD^*$, $\tcD^*$, $\hcD^*$ affords $\widehat \theta$, $\chi^0$, $\psi^0$ respectively.
Similar as  \cite[Lemma 2.11]{Sp12}, $\cP^*:G\to\GL_{\htheta(1)}(k)$ defined by
$$\cP^*(xl)=\hcD(x)^*\tcD(l)^* \ \text{for}\ x\in X\ \text{and}\ l\in ML,$$ is a projective representation of $G$ over $k$ associated to $(G,M,\widehat \theta)$.
In addition, the factor set $\alpha^*$ of $\cP^*$ satisfies that
$$\alpha^*(xlM,x'l'M)=(\la_l)^0(x')\ \text{for $x,x'\in X$ and $l,l'\in ML$}.$$
Here for every $l\in L$, the linear character $\la_l\in\Irr(X/N)$ is determined by
$\psi=\la_l \psi^l$.
Analogously, ${\cP'}^*:\N_G(D)\to\GL_{\htheta'(1)}(k)$ is defined to satisfy $${\cP'}^*(xl)=\hcD'(x)^*\tcD'(l)^*\ \text{for}\ x\in \N_X(D)\ \text{and}\ l\in \N_{ML}(D),$$ is a projective representation of $\N_G(D)$ over $k$ associated to 
$(\N_G(D), \N_M(D), \widehat \theta')$.
The factor set ${\alpha'}^*$ of ${\cP'}^*$ satisfies that
$${\alpha'}^*(xl\N_M(D),x'l'\N_M(D))=({\la'}_l)^0(x')\ \text{for $x,x'\in \N_X(D)$ and $l,l'\in \N_{ML}(D)$}.$$
	Here for every $l\in L$, the linear character $\la_l'\in\Irr(\N_X(D)/\N_N(D))$ is determined by
$\psi_0=\la_l' \psi_0^l$.
As in the proof of \cite[Lemma 3.4]{BS20}, $(\cP^*,{\cP'}^*)$ gives $(G, M, \widehat \theta) \geqslant(\N_G(D), \N_M(D), \widehat \theta')$.
In addition, it can be checked that $(\cP^*,{\cP'}^*)$ gives $(G, M, \htheta) \geqslant_c(\N_G(D), \N_M(D), \widehat \theta')$ immediately.

For the block induction property, we claim that 	the ordinary-modular character triples
$(G,M,\theta')$ and $(\N_G(D),\N_M(D),\theta'_0)$ are	 strong isomorphic via 
$$(G, M, {\theta'}) \geqslant_b(\N_G(D), \N_M(D), \theta_0')$$ and 
$$(G, M, \htheta) \geqslant_c(\N_G(D), \N_M(D), \widehat \theta')$$ that are given by $(\cP,\cP')$ and $(\cP^*,{\cP'}^*)$ respectively.
In fact, it suffices to show that
for any $M\le J\le G$ and any projective representation $\mathcal Q$ of $J/M\cong \N_J(D)/\N_M(D)$ whose factor set is inverse to the one of $\cP|_{J}$ such that $\mathcal Q\otimes\cP|_J$ is an $\cO$-representation of $J$, one has that
$\mathcal Q$ can be chosen to be over $\cO$ by Lemma~\ref{lem-proj-cO}.
According to \cite[Lemma 2.6]{Sp13}, an isomorphism between ordinary-modular character triples with defect zero characters also preserves the partitioning of the (Brauer) characters into blocks.
So $(\cP^*,{\cP'}^*)$ gives $(G, M, {\htheta}) \geqslant_b(\N_G(D), \N_M(D), \widehat \theta')$.
\end{proof}

\subsection{Equivariant bijections}

Let $G\unlhd \tG$.
Now we recall the relationship ``covering" for weights between $G$ and $\tG$ defined in \cite[\S 2]{BS20}.
Let $(Q,\varphi)$ be a weight of $G$.
Let $M$ be a intermediate subgroup  $\N_G(Q)\le M\le \N_{\tG}(Q)_\varphi$ and $M/\N_G(Q)$ is an $\ell$-group.
We fix a defect group $\tQ/Q$ of the unique block of $M/Q$  covering $\bl_{\N_G(Q)/Q}(\vhi)$.
Then $\tQ\cap \N_G(Q)=Q$, $M=\N_G(Q)\tQ$ and 
$$\N_{M/Q}(\tQ/Q)=(\tQ/Q)\ti \C_{\N_G(Q)/Q}(\tQ/Q).$$
Note that $\pi_{\tQ/Q}(\varphi)\in\dz(\C_{\N_G(Q)/Q}(\tQ/Q))$.
We denote by $\bar\pi_{\tQ/Q}(\varphi)$ the associated character in $\Irr(\N_{M/Q}(\tQ/Q)/(\tQ/Q))$ which lifts to $\pi_{\tQ/Q}(\varphi)\times 1_{\tQ/Q}\in\Irr(\N_{M/Q}(\tQ/Q))$.
In addition,  $$\N_{M/Q}(\tQ/Q)/(\tQ/Q)\cong \N_G(\tQ)\tQ/\tQ$$ is normal in $\N_{\tG}(\tQ)/\tQ$.
Following Brough--Sp\"ath \cite{BS20}, we say that a weight $(\tQ,\tvhi)$ of $\tG$ \emph{covers} $(Q,\varphi)$ if $\tvhi\in\dz(\N_{\tG}(\tQ)/\tQ\mid \bar\pi_{\tQ/Q}(\varphi))$.

Let $(Q,\vhi)$ be a weight of $G$.
We denote by $\Alp(\tG\mid \overline{(Q,\vhi)})$ the set of those $\overline{(\tQ,\tvhi)}\in\Alp(\tG)$ such that $(\tQ,\tvhi)$ covers $(Q,\vhi)^g$ for some $g\in\tG$.

\begin{thm}\label{thm:cliff-equiva-bij}
Let $G$ and $\tG$ be normal subgroups of a finite group $\hG$ such that both $\hG/\tG$  and $\tG/G$ are abelian. Assume that $A$ is a finite group such that $\hG\unlhd A$ and $A$ stabilizes $G$ and $\tG$.
Suppose $B$ is a union of blocks of $G$ such that $B$ is $A$-invariant  and there exists a blockwise $A$-equivariant bijection $$\Omega:\IBr(B)\to\Alp(B)$$ such that $(A_\psi,G,\psi)\geqslant_b (\N_A(Q)_\vhi,\N_G(Q),\vhi^0)$ for every $\psi\in \IBr(B)$ and $\Omega(\psi)=\overline{(Q,\vhi)}$.
Assume further that for any $B$-weight
 $(Q,\vhi)$ of $G$,  if $(\tQ,\tvhi)\in\Alp^0(\tG)$ covers $(Q,\vhi)$ and 
$\vhi_0:=\pi_{\tQ/Q}(\vhi)\in\dz(\N_{G}(\tQ)/Q)$ be the DGN-correspondent,
then  $$(\N_A(Q)_\vhi,\N_G(Q)\tQ,\vhi')\geqslant_b(\N_A(\tQ)_\vhi,\N_G(\tQ)\tQ,(\vhi_0)')$$
for $\vhi'\in\IBr(\N_G(Q)\tQ\mid\vhi^0)$ and $(\vhi_0)'\in\IBr(\N_G(\tQ)\tQ\mid(\vhi_0)^0)$.

Let $\hcB$ (resp. $\tcB$) be the union of blocks of $\hG$ (resp. $\tG$) covering the blocks in $B$.
Assume further that for every $(Q,\vhi)\in\Alp^0(B)$, $\vhi$ extends to its stabilizer in $\N_{\tG}(Q)$, and for every $(\tQ,\tvhi)\in\Alp^0(\tcB)$, $\tvhi$ extends to its stabilizer in $\N_{\hG}(\tQ)$.
Then there exists a blockwise $A$-equivariant bijection $$\hOm:\IBr(\hcB)\to\Alp(\hcB).$$
\end{thm}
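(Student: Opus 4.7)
The plan is to lift the bijection $\Omega$ from $G$ to $\hG$ by combining Clifford theory with the supplied $\geqslant_b$-isomorphisms of character triples, in the spirit of the lifting arguments of \cite{Sp17} and \cite{BS20}. I first parametrize $\IBr(\hcB)$: by Clifford correspondence, each $\hpsi\in\IBr(\hcB)$ arises from a Brauer character of $\hG_\psi$ lying over some $\psi\in\IBr(B)$ (determined up to $\hG$-conjugacy). Using the character-triple isomorphism that comes with the hypothesis on $\Omega$, the extension hypothesis on $\vhi$ transports to an extension of $\psi$ to $\tG_\psi$, and then the second hypothesis (DGN-type through $\tQ$) together with the extension hypothesis on $\tvhi$ gives an extension further to $\hG_\psi$. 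Combined with Gallagher's theorem, this parametrizes $\IBr(\hG\mid\psi)$ by Brauer characters of the abelian quotients $\tG_\psi/G$ and $\hG_\psi/\tG_\psi$.

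Next I parametrize $\Alp(\hcB)$ analogously. Using the covering relation for weights recalled just before the statement, each element of $\Alp^0(\hcB)$ is obtained from a $B$-weight $(Q,\vhi)$ of $G$ via a pair $(\tQ,\tvhi)$ covering $(Q,\vhi)$ in $\tG$. The extension hypothesis on $\tvhi$ lets one further parametrize the weights of $\hG$ covering $(\tQ,\tvhi)$ by Brauer characters of an abelian quotient of the stabilizer in $\N_{\hG}(\tQ)$. The bijection $\Omega$ matches $\psi$ and $\overline{(Q,\vhi)}$ at the base level, while composing the two supplied $\geqslant_b$-isomorphisms via Lemma~\ref{lem:equ-transitive} translates the Brauer characters parametrizing the Brauer-character side to those parametrizing the weight side; this produces the candidate $\hOm$.

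The main obstacle is verifying that $\hOm$ is $A$-equivariant and blockwise. Equivariance relies on the strong-isomorphism property implicit in Theorem~\ref{thm:triple-isomor}: the bijection induced by a projective-representation pair $(\cP,\cP')$ commutes with conjugation by any element of the stabilizer in $A$ (after possibly adjusting the $(\cP,\cP')$ by a scalar as in Lemma~\ref{lem:similar-proj}), so the $A$-equivariance of $\Omega$ propagates through each step of the chain. Block preservation follows from the block-induction conditions built into $\geqslant_b$, combined with the fact that weight block induction is compatible with Brauer correspondence through $\tQ$ and with Clifford induction from stabilizers; the transitivity afforded by Lemma~\ref{lem:equ-transitive} then guarantees that Brauer characters and weights in corresponding blocks of $\hcB$ are matched to one another, yielding the desired blockwise $A$-equivariant bijection $\hOm:\IBr(\hcB)\to\Alp(\hcB)$.
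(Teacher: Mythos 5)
Your plan is in the right spirit — lift $\Omega$ through the chain $G\unlhd\tG\unlhd\hG$ using Clifford theory, the covering relation for weights, and the supplied $\geqslant_b$-isomorphisms — but there is a genuine gap at the point where you propose to ``compose the two supplied $\geqslant_b$-isomorphisms via Lemma~\ref{lem:equ-transitive}''. The two hypothesized relations are
\[
(A_\psi,G,\psi)\geqslant_b(\N_A(Q)_\vhi,\N_G(Q),\vhi^0)
\quad\text{and}\quad
(\N_A(Q)_\vhi,\N_G(Q)\tQ,\vhi')\geqslant_b(\N_A(\tQ)_\vhi,\N_G(\tQ)\tQ,(\vhi_0)'),
\]
and these do \emph{not} compose under Lemma~\ref{lem:equ-transitive}: the right-hand triple of the first has normal subgroup $\N_G(Q)$ and character $\vhi^0$, while the left-hand triple of the second has the larger normal subgroup $\N_G(Q)\tQ$ and a character $\vhi'$ that merely lies \emph{over} $\vhi^0$. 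Lemma~\ref{lem:equ-transitive} requires the middle triple to be literally the same, so the composition you invoke does not exist as stated, and bridging this mismatch is precisely the non-trivial content.

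Relatedly, your one-pass parametrization of $\IBr(\hcB)$ implicitly asserts that a Brauer character $\tpsi$ of $\tG$ extends to its stabilizer $\hG_{\tpsi}$, but you have not actually derived this. The hypothesis only gives extendibility of $\tvhi$ to its stabilizer in $\N_{\hG}(\tQ)$; to transport this into a statement about $\tpsi$ one needs a character-triple isomorphism between $(A_{\tpsi},\tG,\tpsi)$ and $(\N_A(\tQ)_{\tvhi},\N_{\tG}(\tQ),\tvhi^0)$, which is exactly what is missing from your argument. The paper's proof avoids both problems by working in two stages: it first constructs an intermediate bijection $\wOm:\IBr(\tcB)\to\Alp(\tcB)$ together with the full relation $(A_{\tpsi},\tG,\tpsi)\geqslant_b(\N_A(\tQ)_{\tvhi},\N_{\tG}(\tQ),\tvhi^0)$ at the $\tG$ level. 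Producing $\wOm$ requires restricting the given isomorphism of modular character triples to the intermediate subgroup $\tG_\psi$, combining with the DGN-type matching of local characters after passing to characters lying over $\vhi^0$ (this is the role of $\Xi_\psi$ and $\Pi'_\vhi$ via [FLZ21, Lemma~3.3, Prop.~3.4, Thm.~3.14]), and then inducing to $\tG$; only after these adjustments do the intermediate triples literally coincide so that Lemma~\ref{lem:equ-transitive} applies. Once the $\geqslant_b$ relation is in hand at the $\tG$ level, the same two-step argument is iterated from $\tG$ to $\hG$, with the extendibility hypothesis on $\tvhi$ now usable. Without this intermediate construction your argument cannot close.
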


\begin{proof}	
We recall some technique from 	the proof of \cite[Thm.~3.16]{FLZ21}.
Let $\psi\in\IBr(B)$ and $\Omega(\psi)=\overline{(Q,\vhi)}$. 
Assume that $$(A_\psi,G,\psi)\geqslant_b (\N_A(Q)_\vhi,\N_G(Q),\vhi^0)$$ is given by $(\cP,\cP')$.
	We denote by $\sigma^{(\psi)}$ the isomorphism between the modular character triples  $(A_\psi,G,\psi)$ and $(\N_A(Q)_\vhi,\N_G(Q),\vhi^0)$ given by $(\cP,\cP')$.
	By an analogous argument as  in the proof of \cite[Prop.~4.7]{NS14},  we assume that  $\sigma^{(\psi)}$ satisfies that
	\begin{equation*}\label{equ:sigma-equiva}
		\sigma^{(\psi)}_J(\zeta)^x=\sigma^{(\psi^x)}_{J^x}(\zeta^x)\ \textrm{for every}\ G\le J\le A_\psi, \psi\in\IBr(B), \zeta\in\IBr(J\mid\psi), x\in \N_A(Q).
		%\addtocounter{thm}{1}\tag{\thethm}
	\end{equation*}
By \cite[Lemma 3.3]{FLZ21} the bijection
	$$\sigma_{\tG_\psi}^{(\psi)}:\IBr(\tG_\psi\mid\psi)\to\IBr(\N_{\tG}(Q)_\vhi\mid \vhi^0)$$  satisfies that for every $\tpsi'\in\IBr(\tG_\psi\mid\psi)$ and $\widetilde{{\vhi'}^0}=\sigma_{\tG_\psi}^{(\psi)}(\tpsi')$ that
	$$((A_\psi)_{\widetilde \psi'},\tG_\psi,\tpsi')\geqslant_b ((\N_A(Q)_\vhi)_{\widetilde{{\vhi'}^0}},\N_{\tG}(Q)_\vhi,\widetilde{{\vhi'}^0}).$$

	For $\tpsi\in\IBr(\tcB)$, let $\psi\in\IBr(G\mid \tpsi)$ and $\tpsi'\in\IBr(\tG_{\psi}\mid\psi)$ such that $\Ind^{\tG}_{\tG_\psi} \tpsi'=\tpsi$.
	 The Brauer character $\tvhi':=\Ind^{\N_{\tG}(Q)}_{\N_{\tG}(Q)_\vhi}\sigma^{(\psi)}_{\tG_{\psi}}(\tpsi')$ is well-defined.
	Let $\mathcal T$ be a fixed complete set of representatives of $\tG$-orbits in $\IBr(B)$.
	Then  for every $\psi\in\mathcal T$ and $\Omega(\psi)=\overline{(Q,\vhi)}$, as in the proof of \cite[Thm.~3.15]{FLZ21}, 
	we can define a bijection $$\Pi_{\psi}:\ \IBr(\tG\mid \psi)\to \IBr(\N_{\tG}(Q)\mid \vhi^0),\quad
	\tpsi\mapsto \tvhi'$$ satisfies that
 $\N_{A}(Q)_{\tpsi}=N_{A}(Q)_{\Pi_\psi(\tpsi)}$ 
and	$$(A_{\tpsi},\tG,\tpsi)\geqslant_b (\N_{A}(Q)_{\tvhi'},\N_{\tG}(Q),\tvhi').$$
Together with $\Xi_\psi$ from \cite[Thm.~3.13]{FLZ21}, we have obtained a bijection between $\IBr(\tG\mid \psi)$ and $\Alp(\tG\mid \Omega(\psi))$.

We give more information of $\Xi_\psi$.
For every $(Q,\vhi)\in\Alp^0(B)$ and $(\tQ,\tvhi)\in\Alp^0(\tcB)$ such that $(\tQ,\tvhi)$ covers $(Q,\vhi)$,
by the assumption, one has
$$(\N_A(Q)_\vhi,\N_G(Q)\tQ,\vhi')\geqslant_b(\N_A(\tQ)_\vhi,\N_G(\tQ)\tQ,(\vhi_0)')$$
where $\vhi'\in\IBr(\N_G(Q)\tQ\mid\vhi^0)$, $(\vhi_0)'\in\IBr(\N_G(\tQ)\tQ\mid(\vhi_0)^0)$ and
$\vhi_0:=\pi_{\tQ/Q}(\vhi)\in\dz(\N_{G}(\tQ)/Q)$ is the DGN-correspondent.
By Lemma 3.3 and Proposition 3.4 of \cite{FLZ21}, we can deduce that 
there is a bijection $$\Pi'_\vhi:\IBr(\N_{\tG}(Q)\mid \vhi')\to \IBr(\N_{\tG}(\tQ)\mid (\vhi_0)')$$
such that $(\N_A(Q)_{\tvhi},\N_{\tG}(Q),\tvhi)\geqslant_b(\N_A(\tQ)_{\tvhi},\N_{\tG}(\tQ),\Pi_\vhi'(\tvhi))$.

Therefore, when $\psi$ runs through $\mathcal T$, by Lemma~\ref{lem:equ-transitive},
one has a blockwise $A$-equivariant bijection $\wOm:\IBr(\tcB)\to\Alp(\tcB)$ such that for every $\tpsi\in\IBr(\tcB)$,
	$\overline{(\tQ,\tvhi)}=\wOm(\tpsi)$,
$$(A_{\tpsi},\tG,\tpsi)\geqslant_b (\N_A(\tQ)_{\tvhi},\N_{\tG}(\tQ),\tvhi^0).$$	

Thus by the arguments above (similar as \cite[Thm.~3.15]{FLZ21}),
there exists a blockwise $A$-equivariant bijection $\hOm:\IBr(\hcB)\to\Alp(\hcB).$
\end{proof}

\section{Quasi-isolated blocks and Jordan decomposition of weights}
\label{sec:Jor-wei}

We establish the Jordan decomposition for weights of groups of type $\mathsf A$.
Since the local subgroups of a connected reductive group may be disconnected, we start from the algebraic groups which are possibly disconnected.
Let $p$ be a prime different from $\ell$, $\F_p$ the field of $p$ elements, and $\overline{\F}_p$ the algebraic closure of $\F_p$.
Recall that $\Lambda \in\{\cO,k\}$

\subsection{Reductive groups}
We recall  some elementary results on disconnected reductive groups  as follows, which can be found in \cite{DM94} and \cite[\S 4.8]{GM20}.

Let $\bG$ be a (possibly disconnected) reductive algebraic group defined over $\overline{\F}_p$.
Denote by $\bG^\circ$ the connected component of $\bG$ containing the identity.
A \emph{parabolic subgroup} of $\bG$ is any closed subgroup $\bP\le\bG$ which contains a Borel subgroup of $\bG^\circ$.
Then $\bP^\circ$ is a parabolic subgroup of $\bG^\circ$.
Let $\bU$ be the unipotent radical of $\bP^\circ$ and let $\bP^\circ=\bU\rtimes \bL^\circ$ be a Levi decomposition of $\bP^\circ$.
Then $\bP=\bU\rtimes \bL$ with $\bL=\N_{\bG}(\bL^\circ)$.
We call $\bL$ a \emph{Levi subgroup} of $\bG$.

Let $Q$ be a finite solvable $p'$-subgroup of $\bG$.
By \cite[\S 3]{BDR17}, $\N_{\bG}(Q)$ and $\C_{\bG}(Q)$ are reductive groups.
Let $\bP$ be a parabolic subgroup of $\bG$ and let $\bP=\bU\rtimes \bL$ be a Levi decomposition such that $Q\subseteq \bL$.
Then $\N_{\bP}(Q)$ is a parabolic subgroup of $\N_{\bG}(Q)$ with unipotent radical $\C_{\bU}(Q)$ and Levi decomposition $\N_{\bP}(Q)=\C_{\bU}(Q)\rtimes \N_{\bL}(Q)$.
Similarly, $\C_{\bP}(Q)$ is a parabolic subgroup of $\C_{\bG}(Q)$ with unipotent radical $\C_{\bU}(Q)$ and Levi decomposition $\C_{\bP}(Q)=\C_{\bU}(Q)\rtimes \C_{\bL}(Q)$.

Let $F:\bG\to\bG$ be a Steinberg endomorphism such that a power of $F$ is a Frobenius endomorphism defining a rational structure over a finite field $\F_q$ of $q$ elements, where $q$ is a power of $p$.
Let $\bL$ be an $F$-stable subgroup of $\bG$ and $\bP$ a parabolic subgroup of $\bG$ containing $\bL$ so that $\bP=\bU\rtimes \bL$ is the Levi decomposition.
The Deligne--Lusztig varieties 
$$\bY_{\bU}^{\bG}=\{ g\bU\in\bG/\bU\mid g^{-1}F(g)\in \bU\cdot F(\bU)  \}.$$
were introduced in \cite{DL76}, which has a left $\bG^F$- and right $\bL^F$- action.

\subsection{Bonnaf\'e--Dat--Rouquier Morita equivalence}

Now we assume further that $\bG$ is connected.
Let $(\bG^*,F^*)$ be in duality with $(\bG,F)$, where $F^*$ is a Steinberg endomorphism of $\bG^*$.
We also write it $F$ for short.
We recall that an element $t$ in $\bG^*$ is called \emph{quasi-isolated} if the centralizer $\C_{\bG^*}(t)$ is not contained in a proper Levi subgroup of $\bG^*$, while $t$ is called \emph{isolated} in $\bG^*$ if the connected centralizer $\C^\circ_{\bG^*}(t)$ is not contained in a proper Levi subgroup of $\bG^*$.
Following Definition 3.1 of \cite{Ru20b}, we call a semisimple element $t\in{\bG^*}^F$ is \emph{strictly quasi-isolated} if $\C_{{\bG^*}^F}(t)\C^\circ_{\bG^*}(t)$ is not contained in a proper Levi subgroup of $\bG^*$.

Let $s$ be a semisimple $\ell'$-element of ${\bG^*}^F$.
By \cite{BM89}, 
there exists a central idempotent $e_s^{\bG^F}$ of $\Lambda \bG^F$ which is uniquely determined by the ${\bG^*}^F$-conjugacy class containing $s$.
If  $b$ is a block idempotent of $\Z(\Lambda \bG^Fe_s^{\bG^F})$, then we say the block $b$ is \emph{in $s$}.

Assume that $s$ is not isolated let $\bL^*$ be an $F$-stable Levi subgroup of $\bG^*$ containing $\C_{\bG^*}^\circ(s)$.
Assume further that $\bL^*\C_{\bG^*}(s)^F=\C_{\bG^*}(s)^F\bL^*$.
Denote by $\bL$ an $F$-stable Levi subgroup of $\bG$ in duality with $\bL^*$.
We let $\bN^*=\C_{\bG^*}(s)^F\bL^*$ and
set $\bN$ to be the subgroup of $\N_{\bG}(\bL)$ such that $\bN/\bL\cong\bN^*/\bL^*$ under the canonical isomorphism between $\N_{\bG}(\bL)/\bL$ and $\N_{\bG^*}(\bL^*)/\bL^*$ induced by duality.
Recall that $\bG\hookrightarrow\tbG$ is a regular embedding.
Let $\tbN=\bN \Z(\tbG)$.

\begin{thm}[Bonnaf\'e--Dat--Rouquier]\label{thm:BDR-equ}
	Suppose that there exists a $\Lambda ((\bG^F\ti(\bN^F)^{\opp})\Delta \tbN^F)$-module $M'$ extending the
	$\Lambda ((\bG^F\ti(\bL^F)^{\opp})\Delta \tbL^F)$-module $H_c^{\dim}(\bY^{\bG}_{\bU},\Lambda)e_s^{{\bL}^F}$. 
	Then there exists a complex $\mathcal C$ of $\cO \bG^F$-$\cO\N^F$-bimodules extending $\G\Ga_c(\bY_{\bU}^\bG,\cO)^{\mathrm{red}}e_s^{\bL^F}$ so that the complex $\mathcal C$ induces a splendid Rickard equivalence between $\cO \bG^Fe_s^{\bG^F}$ and $\cO \bN^Fe_s^{\bL^F}$ and the bimodule $H^{\dim(\bY_{\bU}^\bG)}(\mathcal C)$ induces a Morita equivalence between 
	$\cO \bG^Fe_s^{\bG^F}$ and $\cO \bN^Fe_s^{\bL^F}$.
\end{thm}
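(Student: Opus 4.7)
The plan is to bootstrap the connected Bonnaf\'e--Dat--Rouquier theorem applied to the pair $(\bG,\bL)$ up to the possibly disconnected group $\bN$, with the hypothesised extension of the top cohomology as the crucial input. First I would invoke the connected BDR (Theorem \ref{thm:equ-blocks}) to obtain a splendid Rickard equivalence between $\cO\bG^F e_s^{\bG^F}$ and $\cO\bL^F e_s^{\bL^F}$ implemented by the complex $\mathcal C_0 := \G\Ga_c(\bY_{\bU}^\bG,\cO)^{\mathrm{red}}e_s^{\bL^F}$ of $(\cO\bG^F,\cO\bL^F)$-bimodules, together with its canonical compatible $\Delta\tbL^F$-action. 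The remaining task is then to extend this complex equivariantly along $\bL^F\subseteq \bN^F$ and $\tbL^F\subseteq\tbN^F$.

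To produce such an extension, I would use that $\bN$ normalises $\bL$ and, by its very construction via $\bN/\bL\cong \bN^*/\bL^*\subseteq \C_{\bG^*}(s)^F\bL^*/\bL^*$, stabilises the idempotent $e_s^{\bL^F}$. For each $n\in\bN^F$, conjugation transports $\mathcal C_0$ to another splendid Rickard complex $\tw{n}\mathcal C_0$ between the same pair of block algebras; by Puig's rigidity of splendid Rickard equivalences, $\tw{n}\mathcal C_0\cong\mathcal C_0$ in $\mathrm{Ho}^b$. Choosing such isomorphisms coherently produces an action of $\bN^F/\bL^F$ on $\mathcal C_0$ up to a $2$-cocycle obstruction, and similarly for the combined diagonal $\tbN^F$-action. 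The hypothesised module $M'$ is precisely an extension of $H_c^{\dim}(\mathcal C_0) = H_c^{\dim}(\bY_{\bU}^\bG,\Lambda)e_s^{\bL^F}$ to $(\bG^F\ti(\bN^F)^{\opp})\Delta\tbN^F$, which trivialises this obstruction at the level of top cohomology. Using that, after multiplication by $e_s^{\bL^F}$, the Deligne--Lusztig complex is essentially concentrated in a single degree (a standard feature of the Lusztig-series decomposition used in \cite{BDR17,Ru20}), the endomorphism algebra of $\mathcal C_0$ in $\mathrm{Ho}^b$ maps faithfully to that of $H_c^{\dim}(\mathcal C_0)$, so the vanishing of the obstruction on cohomology pulls back to $\mathcal C_0$ and yields the desired $\mathcal C$.

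With $\mathcal C$ constructed, the splendid property is inherited: each indecomposable summand of $\mathcal C$ restricts along $\bL^F\subseteq\bN^F$ to summands of $\mathcal C_0$ whose vertices lie in $\Delta\bL^F$, and a Mackey argument then places the vertex of each summand of $\mathcal C$ inside $\Delta\bN^F$. The Rickard equivalence condition on $\mathcal C$ can be checked after restriction to $\cO\bG^F\otimes(\cO\bL^F)^{\opp}$, where it reduces to the already-established connected case. Finally, $H^{\dim(\bY_{\bU}^\bG)}(\mathcal C)$ is an $(\cO\bG^F,\cO\bN^F)$-bimodule extending $H_c^{\dim}(\mathcal C_0)$; by Lemma \ref{lem:ext-abel} it occurs as a summand of $\Ind_{\bL^F}^{\bN^F}H_c^{\dim}(\mathcal C_0)$ on the right-hand side, and the Morita equivalence statement follows from the $\bL^F$-level Morita equivalence together with faithful exactness of the restriction functor from $\cO\bN^F e_s^{\bL^F}$-modules to $\cO\bL^F e_s^{\bL^F}$-modules.

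The main obstacle will be the middle step: converting the existence of the single module extension $M'$ at the top-cohomology level into the extension of the whole complex $\mathcal C_0$. This requires delicate control of $\End_{\mathrm{Ho}^b}(\mathcal C_0)$ and its relation to $\End(H_c^{\dim}(\mathcal C_0))$ as a bimodule, which is obtained through the cohomological concentration properties of the Deligne--Lusztig variety after cutting by the $\ell'$-block idempotent $e_s^{\bL^F}$. Once this comparison is established, the passage from the hypothesis to the conclusion is formal, and the proof concludes by combining Puig's uniqueness with the verification steps outlined above.
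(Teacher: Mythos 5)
Your proposal misidentifies the starting point and this propagates through the whole argument. You claim that the complex $\mathcal C_0 := \G\Ga_c(\bY_{\bU}^\bG,\cO)^{\mathrm{red}}e_s^{\bL^F}$ already gives a splendid Rickard equivalence between $\cO\bG^F e_s^{\bG^F}$ and $\cO\bL^F e_s^{\bL^F}$. That is only true when $\C_{\bG^*}(s)\subseteq\bL^*$, i.e. when $\bN=\bL$. Here $\bL^*$ is taken to contain only $\C^\circ_{\bG^*}(s)$, and in the relevant (disconnected) case $\bN\neq\bL$. Then the two algebras $\cO\bG^F e_s^{\bG^F}$ and $\cO\bL^F e_s^{\bL^F}$ are genuinely \emph{not} derived equivalent (their numbers of simple modules differ, matching $\Irr(\C_{\bG^*}(s)^F,1)$ versus $\Irr(\C^\circ_{\bG^*}(s)^F,1)$). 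The whole content of BDR's Theorem 7.5, and of this generalisation, is that the equivalence holds only after passing to $\cO\bN^F e_s^{\bL^F}$. Building on this false base, your appeal to ``Puig rigidity of splendid Rickard equivalences'' to produce the isomorphisms $\tw{n}\mathcal C_0\cong\mathcal C_0$ has no theorem to invoke, your claim that the Rickard property of $\mathcal C$ ``reduces to the already-established connected case'' after restriction to $\cO\bG^F\otimes(\cO\bL^F)^{\opp}$ is false, and your final derivation of the Morita equivalence ``from the $\bL^F$-level Morita equivalence'' has nothing to start from.

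The actual proof is much shorter than what you attempt. The paper simply observes that the proof of \cite[Thm.~7.5]{BDR17} goes through unchanged once one replaces the appeal to \cite[Prop.~7.3]{BDR17} (which constructs the required extension $M'$ precisely when $\bN^F/\bL^F$ is cyclic) by the hypothesis of the present statement that such an $M'$ exists. In BDR the $\bN^F$-stability of $\G\Ga_c(\bY_{\bU}^\bG,\cO)^{\mathrm{red}}e_s^{\bL^F}$ is a geometric input (independence of the parabolic, and compatibility with the $\bN^F$-action), not a formal rigidity argument; and the passage from a single module extension in top degree to an extension of the whole complex is done term-by-term using that each term is an $\ell$-permutation module and the cohomology is concentrated in top degree. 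You correctly anticipate the cohomological concentration as the pivotal ingredient, but the surrounding scaffolding you have written would not survive the disconnected case.
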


\begin{proof}
	This is proved in \cite[\S7]{BDR17}, where 
	in the proof of \cite[Thm.~7.5]{BDR17} use the assumption that $M'$ extends $H_c^{\dim}(\bY^{\bG}_{\bU},\Lambda)e_s^{{\bL}^F}$ instead of
	\cite[Prop.~7.3]{BDR17}.
	See also \cite{Ru20}.
\end{proof}

If $\bN^F/\bL^F$ is cyclic, then  the extendibility property of $H_c^{\dim}(\bY^{\bG}_{\bU},\Lambda)e_s^{{\bL}^F}$ in Theorem~\ref{thm:BDR-equ} always holds (see for instance \cite[Lemma~10.2.13]{Ro98}).
%We remark that the non-cyclic quotient $\bN^F/\bL^F$ can only appear when  $\bG$ is simply connected and $\bG^F$ is of type $D_n$ with even $n\ge 4$ and in this case Ruhstorfer \cite{Ru20} established the extendibility of the $\Lambda ((\bG^F\ti(\bL^F)^{\opp})\Delta \tbL^F)$-module $H_c^{\dim}(\bY^{\bG}_{\bU},\Lambda)e_s^{{\bL}^F}$ if moreover $\ell\nmid (q^2-1)$.

\subsection{Groups of type $\mathsf A_{n-1}$}

From now on we consider groups of type $\mathsf A$ and we first recall some notation as in \cite{Ru21}.
Let $\bG=\SL_n(\overline\F_p)$ and $\tbG=\GL_n(\overline\F_p)$ and let $\eps\in\{\pm 1\}$.
For any power $q_0$ of $p$ and let $F_{q_0}:\tbG\to\tbG$ be the standard Frobenius endomorphism $(a_{ij})\mapsto (a_{ij}^{q_0})$.
Suppose $\gamma':\tbG\to\tbG$ is the graph automorphism given by transpose inversion $g\mapsto g^{-\mathrm{tr}}$.
We fix a power $q$ of $p$.
Let $F'=F_q(\ga')^{\frac{1-\eps}{2}}$.

Let $\bT_0$ be the torus of diagonal matrices and $\bB_0$ be the Borel subgroup of unitriangular matrices.
Note that $\bB_0$ is not $\ga'$-stable and thus $\bT_0$ is not split with respect to $F'$.
As in  \cite[\S 3.2]{CS17} and \cite[\S 5.2]{Ru21}, we define $\ga=n_0\ga'$, where $n_0\in\N_{\bG}(\bT_0)$ is the matrix with entry $(-1)^{l+1}$ at position $(l,n+1-l)$ for $1\le l\le n$ and 0 elsewhere. 
Then $\bB_0$ is $\langle\ga\rangle$-stable. 
For $\eps\in\{\pm 1\}$ we let $F:\tbG\to\tbG$ be the Frobenius endomorphism given by $F=F_q\ga^{\frac{1-\eps}{2}}$.
Then $\bT_0$ is maximal split.
By Lang--Steinberg theorem (see for example \cite[Thm.~1.4.8]{GM20}), $\bG^F$ is isomorphic to ${\bG}^{F'}$ via conjugation by some element in $\bG$.
In addition, the containment $\bG\le\tbG$ is a regular embedding (see \cite[Definition 1.7.1]{GM20}).
Let $G=\bG^F$ and $\tG=\tbG^F$.

We consider the dual groups.
Now $\bG^*$ is adjoint of type $\mathsf A$, then there is a surjective morphism $\pi:\bG\to\bG^*$.
If $f:\bG\to\bG$ is a bijective morphism such that $f(\bT_0)=\bT_0$, then we denote by $f^*:\bG^*\to\bG^*$ the unique morphism satisfying $\pi\circ f=f^*\circ\pi$.
Moreover, if $f$ is a Frobenius endomorphism then the pair $(\bG^*,f^*)$ is in duality with $(\bG,f)$.
For a Levi subgroup $\bL$ of $\bG$, we define $\bL^*=\pi(\bL)$.

For simplicity of notation, we write $F$ instead of $F^*$ for the dual groups.
Let $s$ be a strictly quasi-isolated semisimple $\ell'$-element of ${\bG^*}^F$ such that $s\ne 1$.
We denote by $\bL$ the Levi subgroup of $\bG$ dual to
$\bL^*=\C^\circ_{\bG^*}(s)$, that is, $\bL^*=\pi(\bL)$.
Then $\bL$ is an $F$-stable Levi subgroup of $\bG$.
As in \cite[\S 6]{Ru21}, there exists a proper $F$-stable Levi subgroup $\bL'$ of $\bG$ containing $\bL$ such that $\C_{{\bG^*}^F}(s)(\bL')^*=(\bL')^*\C_{{\bG^*}^F}(s)$ and $N'/L'$ is cyclic of prime order, where $L'={\bL'}^F$ and $N'$ is the common stabilizer of $\bL'$ and $e_s^{L'}$ in $\bG^F$.
In addition, $N'/L'$ is naturally isomorphic to a subquotient of $N/L$.
Obviously, $N'/L'$ is of $\ell'$-order.
By Theorem~\ref{thm:BDR-equ}, $\Lambda N'e_s^{L'}$ and $\Lambda Ge_s^G$ are Morita equivalent.
Let $\tN'=\N_{\tG}(\bL',e_s^{L'})$.

From now on until the end of this section, we always assume that $\ell\nmid (q-\eps)$. 

\subsection{Automorphisms}

Let $\cB$ be the subgroup of $\Aut(\tbG^F)$ generated by $F_p$ and $\ga$.
Then by \cite[Thm.~2.5.1]{GLS98}, $\tG\rtimes\cB$ induces all automorphisms of $G$.
Denote by $\cB_{e_s^{G}}$ the stabilizer of $e_s^{G}$ in $\cB$.
If $(\bG,F)$ is untwisted, then $\cB_{e_s^{G}}=\langle \ga_0,\phi\rangle$, where $\phi:\bG\to\bG$ is a power of $F_p$ or $F_p\ga$ (a Frobenius endomorphism) such that $\phi^r=F$ for some $r$, and $\ga_0\in\{\mathrm{id}_{\bG},\ga\}$.
If $(\bG,F)$ is twisted, then $\cB=\langle F_p\rangle$ and $\cB_{e_s^{G}}=\langle \phi\rangle$, where $\phi$ is a power of $F_p$ (a Frobenius endomorphism) such that $\phi^r=F\ga_0$ with $\ga_0\in\{\mathrm{id}_{\bG},\ga\}$.

We recall the subgroup $\cA$ of $\Aut(\tG)$ from Definition 7.6 of \cite{Ru21}.
If $\bL'$ is not $1$-split, then
 we define $\cA=\cB_{e_s^{G}}$.
If $(\bG,F)$ is untwisted and $\bL'$ is 1-split, then according to \cite[Lemma 6.12]{Ru21}, by possibly replacing $s$ by a $(\bG^*)^F$-conjugate, there exists a Frobenius endomorphism $F_0:\tbG\to\tbG$ commuting with $\ga_0$ such that $F_0^r=F$, and $F_0$ stabilizer $\bL'$ and $e_s^{L'}$.
In this case, we define $\cA:=\langle F_0,\ga_0\rangle$. 
If $(\bG,F)$ is twisted and $\bL'$ is 1-split, then according to \cite[Lemma 6.14]{Ru21},
by possibly replacing $s$ by a $(\bG^*)^F$-conjugate, there exists a Frobenius endomorphism $F_0:\tbG\to\tbG$ commuting with $\ga_0$ such that $F_0^r=F\ga_0$, and $F_0$ stabilizer $\bL'$ and $e_s^{L'}$.
In this situation, we define $\cA:=\langle F_0\rangle$. 

In all cases, $\cA$ is an abelian group and
the group $\tG\cA$ induces the stabilizer of $e_s^G$ in $\Out(G)$.
We set $\cN=\N_{\tG\cA}(\bL',e_s^{L'})$.
Since $N'/L'$ is of $\ell'$-order, we know that the Hall $\ell'$-subgroup of $\cN/\tL'$ is normal.
Denote by $\cN_{\ell'}/\tL'$ the unique Hall $\ell'$-subgroup of $\cN/\tL'$.

\subsection{Equivariant bijections}\label{subsec:equ-bijection}

Let $\bP'$ be the parabolic subgroup of $\bG$ whose Levi complement is $\bL'$ as in \cite[Cor.~6.9]{Ru21}.
Let $\bP'=\bU'\rtimes \bL'$ be the Levi decomposition.
By \cite[Thm.~7.7]{Ru21}, 
the $\Lambda ((G\ti {L'}^{\opp})\Delta \tL')$-module $H^{\dim}(\bY_{\bU'}^{\bG},\Lambda)e_s^{L'}$ extends to a $\Lambda ((G\ti {L'}^{\opp})\Delta \cN_{\ell'})$-module, which is $\Delta \cN$-stable and
denoted by $\hM$.
Let $M'$ be the restriction of $\hM$ to $(G\ti {N'}^{\opp})\Delta \tN'$.

By Theorem ~\ref{thm:BDR-equ}, there exists a complex $\mathcal C'$ of $\Lambda (G\ti (N')^{\opp})$-modules extending $\G\Ga_c(\bY_{\bU'}^{\bG},\Lambda)^{\mathrm{red}} e_s^{L'}$
such that $H^{\dim(\bY_{\bU'}^{\bG})}(\mathcal C')\simeq M'$ and $\mathcal C'$ induces a splendid Rickard equivalence between $\Lambda N'e_s^{L'}$ and $\Lambda Ge_s^G$.

\begin{prop}\label{prop:-bij-Brauer}
	There is an $\cN$-equivariant bijection between $\IBr(N',e_s^{L'})$ and $\IBr(G,e_s^{G})$.
\end{prop}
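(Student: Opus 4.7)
The plan is to extract both the bijection and its equivariance directly from the Morita equivalence set up just before the statement. By Theorem~\ref{thm:BDR-equ}, the $(\Lambda G, \Lambda N')$-bimodule $M' = H^{\dim(\bY_{\bU'}^{\bG})}(\mathcal{C}')$ induces a Morita equivalence between $\Lambda G e_s^{G}$ and $\Lambda N' e_s^{L'}$. Specializing $\Lambda$ to $k$ and applying the standard correspondence between simple modules under a Morita equivalence, one obtains a bijection
\[
\Omega : \IBr(N', e_s^{L'}) \longrightarrow \IBr(G, e_s^{G}),
\]
sending $\psi$ (with corresponding simple $kN'e_s^{L'}$-module $V_\psi$) to the Brauer character afforded by the simple $kGe_s^{G}$-module $k \otimes_\Lambda (M' \otimes_{\Lambda N'} V_\psi)$.

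To promote this to an $\cN$-equivariant bijection, I would verify, for each $\sigma \in \cN$, the existence of a bimodule isomorphism $(M')^\sigma \simeq M'$, where on the left the $G$- and $N'$-actions are twisted by $\sigma$. Such an isomorphism is transported under the Morita functor to $\Omega(\psi^\sigma) = \Omega(\psi)^\sigma$ (note that $\cN = \N_{\tG\cA}(\bL', e_s^{L'})$ normalizes $\bL'$ and $e_s^{L'}$, so it acts both on $\IBr(N', e_s^{L'})$ and on $\IBr(G, e_s^{G})$). These bimodule isomorphisms are supplied precisely by the extension $\hM$: since $\hM$ is a $\Lambda((G \times (L')^{\opp})\Delta\cN_{\ell'})$-module extending $M'$, every $\sigma \in \cN_{\ell'}$ literally acts on $M'$ and witnesses $(M')^\sigma \simeq M'$. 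For $\sigma \in \cN \setminus \cN_{\ell'}$, the $\Delta\cN$-stability of $\hM$ still provides such a (non-canonical) bimodule isomorphism, which is all that the equivariance of $\Omega$ requires. Finally, since $M'$ is itself a $(G \times (N')^{\opp})\Delta\tN'$-module, the remaining action of $\tN'$ (and in particular of $\tL'$) is automatically accounted for.

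The main obstacle is the bookkeeping across the overlapping normalizers $\tN' \subseteq \cN_{\ell'} \subseteq \cN$: one needs to check that the $\sigma$-twisted actions on the two factors $G$ and $N'$ interact correctly with the tensor product $M' \otimes_{\Lambda N'} V_\psi$, so that Morita transport genuinely turns $(M')^\sigma \simeq M'$ into $\Omega(\psi^\sigma) = \Omega(\psi)^\sigma$. This is essentially a concrete unpacking of Ruhstorfer's equivariance result \cite[Thm.~7.7]{Ru21} together with the simple-module/Brauer character dictionary, rather than a genuinely new step.
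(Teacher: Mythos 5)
Your proposal is correct and takes essentially the same route as the paper, which simply observes that the Morita equivalence afforded by $M'$ yields the bijection on irreducible Brauer characters, with equivariance built into the setup via the $\Delta\cN$-stable extension $\hM$ furnished by Ruhstorfer's Theorem~7.7. Your unpacking of the equivariance in terms of bimodule isomorphisms $(M')^\sigma \simeq M'$ for $\sigma \in \cN$ (supplied by restriction from $\hM$ together with its $\Delta\cN$-stability) is precisely the content the paper leaves implicit in its one-line proof.
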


\begin{proof}
	This follows from the fact that the bimodule $M'$ induces a Morita equivalence between $\Lambda N'e_s^{L'}$ and $\Lambda Ge_s^G$.
\end{proof}

Let $c$ be a block in $\Z(\Lambda N' e_s^{L'})$ and $b$ the block of $G$ corresponding to $c$ via $\mathcal C'$.
If  $(Q,c_Q)$ is a $c$-Brauer pair, then by \cite{Ha99}, there exists a unique $b$-Brauer pair $(Q,b_Q)$ such that the complex $b_Q\Br_{\Delta Q}(\mathcal C')c_Q$ induces a Rickard equivalence between $k\C_G(Q)b_Q$ and $k\C_{N'}(Q)c_Q$.
By \cite[Thm.~19.7]{Pu99}, splendid Rickard equivalence between blocks induces an equivalence between Brauer categories.
Then similar with Lemma 4.3 and 4.6 of \cite{FLZ21}, we have

\begin{lem}\label{lem:equ-brauer-cat}
	$(Q,c_Q)\mapsto (Q,b_Q)$ is an $\cN_c$-equivariant injective map from $\bBr(L',c)$ to $\bBr(G,b)$. 
\end{lem}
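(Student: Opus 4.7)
The plan is to transport the Brauer-pair correspondence across the splendid Rickard equivalence given by $\mathcal{C}'$, mirroring the strategy of Lemmas~4.3 and~4.6 of \cite{FLZ21} but adapted to the Levi subgroup $\bL'$ provided by Ruhstorfer's construction. The three items to check are well-definedness of the map (already delivered by the paragraph preceding the lemma via \cite{Ha99}), injectivity, and $\cN_c$-equivariance.

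For well-definedness, the citation of \cite{Ha99} produces, for each $c$-Brauer pair $(Q,c_Q)$, a uniquely determined $b$-Brauer pair $(Q,b_Q)$ so that $b_Q\Br_{\Delta Q}(\mathcal{C}')c_Q$ induces a Rickard equivalence between $k\C_G(Q)b_Q$ and $k\C_{N'}(Q)c_Q$. By Puig's theorem \cite[Thm.~19.7]{Pu99}, this assignment respects the order relation ``$\leq$'' on Brauer pairs, so it defines a morphism of Brauer categories.

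For injectivity, I would argue that the inverse (dual) splendid Rickard equivalence induced by the complex ${\mathcal{C}'}^{\vee}$ produces, by the same construction, a map $(Q,b_Q)\mapsto(Q,c_Q')$ in the opposite direction. The two composites are identity maps because applying $\Br_{\Delta Q}$ to $\mathcal{C}'\otimes^{\mathbf{L}}{\mathcal{C}'}^{\vee}$ and to ${\mathcal{C}'}^{\vee}\otimes^{\mathbf{L}}\mathcal{C}'$ recovers the identity Rickard equivalences on the corresponding blocks of $k\C_G(Q)$ and $k\C_{N'}(Q)$. Hence the map is bijective onto its image; in particular it is injective. Alternatively, one can argue directly: if $(Q,c_Q)$ and $(Q,c_Q')$ both map to $(Q,b_Q)$, then the $k\C_{N'}(Q)$-bimodule structure on $\Br_{\Delta Q}(\mathcal{C}')$ viewed through $b_Q$ must match on the $c_Q$- and $c_Q'$-summands, forcing $c_Q=c_Q'$ by the uniqueness statement in \cite{Ha99}.

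For $\cN_c$-equivariance, the key input is the construction recalled in Section~\ref{subsec:equ-bijection}: the bimodule $M' = H^{\dim(\bY_{\bU'}^{\bG})}(\mathcal{C}')$ is the restriction of the $\cN$-equivariant module $\hM$, and by the uniqueness properties of splendid Rickard complexes extending a given Morita bimodule (see \cite[\S7]{BDR17} and the proof of Theorem~\ref{thm:BDR-equ}) the complex $\mathcal{C}'$ itself acquires a compatible $\cN$-equivariant structure up to homotopy equivalence. For $g\in\cN_c$, conjugation by $g$ then satisfies
\[
  b_Q^{\,g}\,\Br_{\Delta Q^g}(\mathcal{C}')\,c_Q^{\,g}\;\simeq\;\bigl(b_Q\,\Br_{\Delta Q}(\mathcal{C}')\,c_Q\bigr)^g,
\]
so the right-hand side induces a Rickard equivalence between $k\C_G(Q^g)b_Q^g$ and $k\C_{N'}(Q^g)c_Q^g$. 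Uniqueness in \cite{Ha99} then forces $(Q^g,c_Q^g)\mapsto(Q^g,b_Q^g)$, which is the required equivariance.

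The main obstacle I expect is making the last step fully rigorous: the equivariant structure of \emph{Sections}~\ref{subsec:equ-bijection} is asserted at the level of the Morita bimodule $\hM$, whereas $\cN_c$-equivariance of the Brauer-pair map requires compatibility with the entire complex $\mathcal{C}'$ (or at least with each $\Br_{\Delta Q}(\mathcal{C}')$). Lifting the $\cN$-action from $\hM$ to $\mathcal{C}'$ (up to homotopy) and verifying its interaction with the Brauer functors is the bookkeeping that, in analogous situations, occupied the bulk of the proof of \cite[Lemmas~4.3 and~4.6]{FLZ21}.
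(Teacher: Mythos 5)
Your proposal follows the same strategy the paper invokes (transporting Brauer pairs across the splendid Rickard equivalence via Harris's theorem, Puig's Brauer-category equivalence, and the $\cN$-equivariant structure built in Section~\ref{subsec:equ-bijection}); this is exactly what the paper means by ``similar with Lemma~4.3 and~4.6 of [FLZ21].'' The well-definedness step and the injectivity argument via the dual complex are both correct — your ``alternative'' injectivity argument is less convincing, since Harris's uniqueness runs from $c_Q$ to $b_Q$ and not conversely, so the dual-complex route is the one to keep. The gap you flag in the equivariance step is real: knowing $\hM$ is $\Delta\cN$-stable does not \emph{a priori} make the whole complex $\mathcal{C}'$ (as opposed to its top cohomology) $\cN_c$-stable, and that is precisely the content one must extract from the construction. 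The missing ingredient is that the restriction of $\mathcal{C}'$ to $G\ti (L')^{\opp}$ is the canonically defined $\G\Ga_c(\bY_{\bU'}^{\bG},\Lambda)^{\mathrm{red}}e_s^{L'}$, which is $\Delta\cN$-stable up to homotopy because $\cN$ stabilizes $\bL'$ and $e_s^{L'}$ and hence the variety $\bY_{\bU'}^{\bG}$; the extension to $(G\ti (N')^{\opp})$-modules is then pinned down (up to a twist by a linear character of $N'/L'$, cf.\ Lemma~\ref{lem:ext-abel}) by the prescribed top cohomology $M'$, and $M'$ itself is $\Delta\cN$-stable by construction. Making that identification precise, rather than the vaguer appeal to ``uniqueness of splendid Rickard complexes extending a given Morita bimodule,'' is what the cited arguments in [FLZ21, Lemmas~4.3, 4.6] supply, so you should carry out that chain of reasoning explicitly to close the gap.
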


For Brauer pairs
$(Q,c_Q)$ and $(Q,b_Q)$ as above, we define $C_Q=\Tr_{\N_{N'}(Q,c_Q)}^{\N_{N'}(Q)}(c_Q)$ and 
$B_Q=\Tr_{\N_{G}(Q,b_Q)}^{\N_{G}(Q)}(b_Q)$.

\begin{lem}\label{lem:equ-nor-pair}
	$(Q,C_Q)\mapsto (Q,B_Q)$ is an $\cN_c$-equivariant injective map from $\bNBr(L',c)$ to $\bNBr(G,b)$. 	
\end{lem}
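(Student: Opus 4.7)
The plan is to upgrade the equivariant injection on Brauer pairs from Lemma \ref{lem:equ-brauer-cat} to one on normalizer-Brauer pairs by passing through the standard block-cover construction. Given $(Q, C_Q) \in \bNBr(L', c)$, I would pick any block $c_Q$ of $\C_{N'}(Q)$ covered by $C_Q$, so that $(Q, c_Q)$ is a $c$-Brauer pair and $C_Q = \Tr_{\N_{N'}(Q,c_Q)}^{\N_{N'}(Q)}(c_Q)$. Applying Lemma \ref{lem:equ-brauer-cat} gives a $b$-Brauer pair $(Q, b_Q)$, and I set $B_Q := \Tr_{\N_G(Q,b_Q)}^{\N_G(Q)}(b_Q)$, the unique block of $\N_G(Q)$ covering $b_Q$. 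Standard transitivity of block induction through Brauer pairs then yields $(B_Q)^G = (b_Q)^G = b$, so $(Q, B_Q) \in \bNBr(G, b)$ as required.

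Next I would check that this is independent of the choice of $c_Q$ under $C_Q$. Any two such choices differ by conjugation by an element of $\N_{N'}(Q)$. Because $c$ is a block of $N'$ (it lies in $\Z(\Lambda N'e_s^{L'})$), every element of $N'$ stabilises $c$, so $N' \subseteq \cN_c$; in particular $\N_{N'}(Q) \subseteq \cN_c$. The $\cN_c$-equivariance of Lemma \ref{lem:equ-brauer-cat} then forces the corresponding $b_Q$'s to be $\N_G(Q)$-conjugate, so the trace operation produces the same $B_Q$. The same equivariance, now applied to arbitrary elements of $\cN_c$, immediately gives $\cN_c$-equivariance of the resulting map $(Q, C_Q) \mapsto (Q, B_Q)$.

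For injectivity, suppose two pairs $(Q, C_Q)$ and $(Q, C_{Q'})$ (necessarily with the same first component) map to the same $(Q, B_Q)$. Choose $c_Q$ covered by $C_Q$ and $c_{Q'}$ covered by $C_{Q'}$, with associated $b_Q, b_{Q'}$ via Lemma \ref{lem:equ-brauer-cat}. Since both are covered by $B_Q$, they are $\N_G(Q)$-conjugate. The key point is to transfer this conjugacy back to $\C_{N'}(Q)$: this is supplied by Puig's equivalence of Brauer categories $\bBr(N', c) \simeq \bBr(G, b)$ induced by the splendid Rickard equivalence $\mathcal C'$ (\cite[Thm.~19.7]{Pu99}, already invoked for Lemma~\ref{lem:equ-brauer-cat}). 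An equivalence of categories is bijective on hom-sets, so a morphism in $\bBr(G, b)$ between $(Q, b_Q)$ and $(Q, b_{Q'})$ pulls back to a morphism in $\bBr(N', c)$ between $(Q, c_Q)$ and $(Q, c_{Q'})$; interpreted in group-theoretic terms this means $c_Q$ and $c_{Q'}$ are $\N_{N'}(Q)$-conjugate, and hence $C_Q = C_{Q'}$.

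I expect the main obstacle to lie precisely in the last step: making sure that under Puig's equivalence of Brauer categories, the $\N_G(Q)$-conjugacy on the $G$-side matches exactly the $\N_{N'}(Q)$-conjugacy on the $N'$-side (not conjugacy by some smaller group such as $\N_{L'}(Q)$). This relies on the fact that the complex $\mathcal C'$ is genuinely a complex of $\Lambda G$-$\Lambda N'$-bimodules so that Puig's equivalence operates at the level of $N'$, together with the observation that $N'/L'$ is of $\ell'$-order and therefore compatible with the Brauer functors $\Br_{\Delta Q}$ used to build the local equivalences.
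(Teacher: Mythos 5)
Your proof is correct and follows essentially the same route as the paper: the paper proves well-definedness from Lemma \ref{lem:equ-brauer-cat} exactly as you do, and then refers the injectivity and equivariance arguments to \cite[Lemma 4.8]{FLZ21}, which, as you anticipate, rests on the Puig equivalence of Brauer categories $\bBr(N',c)\simeq\bBr(G,b)$ that was already invoked just before Lemma \ref{lem:equ-brauer-cat}. The one point you flag as a possible obstacle — that the morphisms in $\bBr(N',c)$ are induced by $N'$-conjugation rather than by some smaller group — is indeed exactly where the fact that $\mathcal C'$ is a complex of $\Lambda G$-$\Lambda N'$-bimodules (not $\Lambda G$-$\Lambda L'$) is used, and your resolution is the right one.
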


\begin{proof}
	By Lemma~\ref{lem:equ-brauer-cat}, the map 	$(Q,C_Q)\mapsto (Q,B_Q)$ is well-defined.
	Thus the proof of this lemma is similar as \cite[Lemma 4.8]{FLZ21}.
\end{proof}

Now let $(Q,C_Q)\in \bNBr(L',c)$, and let $(Q,B_Q)$ be the corresponding pair in $\bNBr(G,b)$ given as above.
By \cite[Thm.~5.2]{Ri96}, there is a unique complex $\mathcal C_Q'$ of $\ell$-permutation $\Lambda((\C_G(Q)\ti \C_{N'}(Q)^{\opp})\Delta \N_{N'}(Q))$-modules, which is a lift of the complex $\Br_{\Delta Q}(\mathcal C')$ of $\ell$-permutation modules from $k$ to $\Lambda$.
Let $$M_Q'=\Ind^{\N_G(Q)\ti \N_{N'}(Q)^{\opp}}_{(\C_G(Q)\ti \C_{N'}(Q)^{\opp})\Delta \N_{N'}(Q)}H^{\dim(\bY_{\C_{\bU'}(Q)}^{\C_{\bG}(Q)})}(\mathcal C_Q')\mathrm{br}_Q(e_s^{L'}).$$
Then $M_Q'$ is an extension of $H_c^{\dim}(\bY^{\N_{\bG}(Q)}_{\C_{\bU'}(Q)},\Lambda)\mathrm{br}_Q(e_s^{L'})$.
We have the following Morita equivalence for local situation.

\begin{thm}\label{thm:local-Ruh}
	The bimodule $M'_QC_Q$ induces a Morita equivalence between $\Lambda \N_{N'}(Q)C_Q$ and $\Lambda \N_G(Q)B_Q$.
\end{thm}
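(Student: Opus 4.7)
The plan is to descend the splendid Rickard equivalence from the global to the local level via the Brauer functor, identify the resulting complex geometrically with the Deligne--Lusztig complex of the (possibly disconnected) centralizer, apply the disconnected analog of Theorem \ref{thm:BDR-equ} at the centralizer level to obtain a Morita equivalence there, and then propagate it to the normalizer level via induction and Clifford theory.

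First I would use the fact that $\mathcal{C}'$ induces a splendid Rickard equivalence between $\Lambda Ge_s^G$ and $\Lambda N'e_s^{L'}$; by \cite[Thm.~5.2]{Ri96}, the uniquely lifted complex $\mathcal{C}_Q'$ induces a splendid Rickard equivalence on the Brauer-local level. Decomposing by the Brauer pair correspondence of Lemma \ref{lem:equ-brauer-cat}, the summand $b_Q \mathcal{C}_Q' c_Q$ induces a splendid Rickard equivalence between $\Lambda \C_G(Q) b_Q$ and $\Lambda \C_{N'}(Q) c_Q$. Next, using the isomorphism
\[
\Br_{\Delta Q}\bigl(\G\Ga_c(\bY_{\bU'}^{\bG},\Lambda)\bigr) \;\cong\; \G\Ga_c\bigl((\bY_{\bU'}^{\bG})^{\Delta Q},k\bigr) \;\cong\; \G\Ga_c\bigl(\bY_{\C_{\bU'}(Q)}^{\C_{\bG}(Q)},k\bigr)
\]
from Rickard's theorem \cite[Thm.~4.2]{Ri94} and the $Q$-fixed-point description of Deligne--Lusztig varieties, I would identify $\mathcal{C}_Q'$ up to homotopy with the lifted Deligne--Lusztig complex $\G\Ga_c(\bY_{\C_{\bU'}(Q)}^{\C_{\bG}(Q)},\Lambda)^{\mathrm{red}}\mathrm{br}_Q(e_s^{L'})$.

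With this identification, I would apply Theorem \ref{thm:BDR-equ} to the (possibly disconnected) reductive group $\C_{\bG}(Q)$ with $F$-stable Levi subgroup $\C_{\bL'}(Q)$. The required extension of $H_c^{\dim}(\bY^{\C_{\bG}(Q)}_{\C_{\bU'}(Q)},\Lambda)\mathrm{br}_Q(e_s^{L'})$ is obtained by applying $\Br_{\Delta Q}$ to the global extension $\hM$ guaranteed by \cite[Thm.~7.7]{Ru21}, restricted to the appropriate $\ell$-local subgroup. This yields that $H^{\dim(\bY^{\C_{\bG}(Q)}_{\C_{\bU'}(Q)})}(\mathcal{C}_Q')\mathrm{br}_Q(e_s^{L'})$ induces a Morita equivalence between $\Lambda\C_G(Q)\mathrm{br}_Q(e_s^G)$ and $\Lambda\C_{N'}(Q)\mathrm{br}_Q(e_s^{L'})$; restricting to the block components gives a Morita equivalence between $\Lambda\C_G(Q)b_Q$ and $\Lambda\C_{N'}(Q)c_Q$.

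Finally I would pass from centralizers to normalizers. The top-cohomology bimodule naturally extends to a $(\C_G(Q)\times\C_{N'}(Q)^{\opp})\Delta\N_{N'}(Q)$-module, and inducing along the inclusion into $\N_G(Q)\times\N_{N'}(Q)^{\opp}$ produces exactly $M'_Q$. A standard Clifford-theoretic argument (in the spirit of Lemma \ref{lem:ext-abel} and the techniques developed in \cite{FLZ21}) shows that this induction preserves the Morita equivalence property, after multiplication by the block idempotents. The fact that $B_Q = \Tr_{\N_G(Q,b_Q)}^{\N_G(Q)}(b_Q)$ and $C_Q = \Tr_{\N_{N'}(Q,c_Q)}^{\N_{N'}(Q)}(c_Q)$ are the block idempotents covering $b_Q$ and $c_Q$ respectively, together with the $\N_G(Q)$-equivariance of the correspondence $(Q,c_Q)\mapsto(Q,b_Q)$ from Lemma \ref{lem:equ-nor-pair}, ensures that $M'_Q C_Q$ gives precisely a Morita equivalence between $\Lambda\N_{N'}(Q)C_Q$ and $\Lambda\N_G(Q)B_Q$.

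The main obstacle is the passage to normalizers in the final step: one must verify that the block idempotents produced by induction match the trace-constructed idempotents $B_Q$ and $C_Q$, and that the bimodule structure is compatible with the Brauer-pair correspondence on both sides simultaneously. A subsidiary difficulty lies in establishing the extension hypothesis for Theorem \ref{thm:BDR-equ} at the centralizer level, which requires carefully localizing the global extension $\hM$ constructed in \cite[Thm.~7.7]{Ru21} via the Brauer functor, and checking that this localized extension intertwines with the $\Delta \N_{N'}(Q)$-action.
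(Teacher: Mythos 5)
Your sketch follows the same route as the paper's cited source, \cite[Thm.~3.10]{Ru20a}: descend via the Brauer functor and geometric fixed points, identify the local complex with the Deligne--Lusztig complex of $\C_{\bG}(Q)$, apply a BDR-type vanishing/Morita argument at the centralizer level, and induce to the normalizer. The one point to flag is that Theorem \ref{thm:BDR-equ} as stated in the paper requires $\bG$ connected, whereas $\C_{\bG}(Q)$ is in general disconnected; the needed disconnected extension of the vanishing and Morita machinery is exactly what Ruhstorfer supplies in \cite{Ru20a} (building on \cite[\S 3]{BDR17}), which is why the paper defers to that source rather than to its own Theorem \ref{thm:BDR-equ}.
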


\begin{proof}
	This is similar as the proof of \cite[Thm.~3.10]{Ru20a}.
\end{proof}

Next, we consider the Morita equivalence between blocks of quotient groups.
Let $\overline{\N_{N'}(Q)}=\N_{N'}(Q)/Q$ and $\overline{\N_{G}(Q)}=\N_{G}(Q)/Q$, and let
$$\overline{M'_QC_Q}=\Lambda \overline{\N_{N'}(Q)}\bar C_Q\otimes_{\Lambda \N_{N'}(Q)} M'_QC_Q\otimes_{\Lambda \N_G(Q)} \Lambda \overline{\N_G(Q)}\bar B_Q.$$
Here $\bar C_Q$ and $\bar B_Q$ are the images of $C_Q$ and $B_Q$ through the canonical morphisms $\Lambda \N_{N'}(Q)\to \Lambda \overline{\N_{N'}(Q)}$
and $\Lambda \N_{G}(Q)\to \Lambda \overline{\N_{G}(Q)}$.

\begin{thm}\label{thm:local-quotient}
	The bimodule $\overline{M'_QC_Q}$ induces a Morita equivalence between $\Lambda \overline{\N_{N'}(Q)}\bar C_Q$ and $\Lambda \overline{\N_G(Q)}\bar B_Q$.
\end{thm}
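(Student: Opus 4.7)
The plan is to descend the local Morita equivalence of Theorem~\ref{thm:local-Ruh} along the normal $\ell$-subgroup $Q$. The first step is to verify that $\bar B_Q$ and $\bar C_Q$ are well-defined block idempotents of $\Lambda\overline{\N_G(Q)}$ and $\Lambda\overline{\N_{N'}(Q)}$, respectively. Since $Q$ is a normal $\ell$-subgroup of both $\N_G(Q)$ and $\N_{N'}(Q)$, every defect group of every block of these algebras contains $Q$, so the quotient maps $\pi_G\colon\Lambda\N_G(Q)\to\Lambda\overline{\N_G(Q)}$ and $\pi_{N'}\colon\Lambda\N_{N'}(Q)\to\Lambda\overline{\N_{N'}(Q)}$ send $B_Q$ and $C_Q$ to central primitive idempotents and induce algebra isomorphisms $\Lambda\N_G(Q)B_Q/I_G\cong\Lambda\overline{\N_G(Q)}\bar B_Q$ and $\Lambda\N_{N'}(Q)C_Q/I_{N'}\cong\Lambda\overline{\N_{N'}(Q)}\bar C_Q$, where $I_G$ and $I_{N'}$ are the kernels of the restricted quotient maps.

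The core of the argument is then to show that $I_G\cdot M'_QC_Q=0$ and $M'_QC_Q\cdot I_{N'}=0$, so that $\overline{M'_QC_Q}$ is naturally a non-zero $(\Lambda\overline{\N_G(Q)}\bar B_Q,\Lambda\overline{\N_{N'}(Q)}\bar C_Q)$-bimodule. The essential input is the construction of $M'_Q$: the complex $\mathcal C_Q'$ is by definition the lift of $\Br_{\Delta Q}(\mathcal C')$, and by the defining property of the Brauer functor, $\Delta Q$ acts trivially on $H^{\dim(\bY^{\C_\bG(Q)}_{\C_{\bU'}(Q)})}(\mathcal C_Q')\mathrm{br}_Q(e_s^{L'})$. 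I would then analyse how this $\Delta Q$-trivial structure is transferred through the induction from $(\C_G(Q)\ti\C_{N'}(Q)^{\opp})\Delta\N_{N'}(Q)$ to $\N_G(Q)\ti\N_{N'}(Q)^{\opp}$: the induced bimodule decomposes via Mackey, and after cutting by the block idempotents $B_Q$ and $C_Q$ the separate left and right $(q-1)$-actions (for $q\in Q$) collapse to zero, in parallel with the arguments of \cite[\S 4]{FLZ21} used to establish the non-quotient analogue Theorem~\ref{thm:local-Ruh}.

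Once this compatibility is in place, the Morita equivalence follows formally. The functor $-\otimes_{\Lambda\N_G(Q)B_Q}\Lambda\overline{\N_G(Q)}\bar B_Q$ identifies $\Lambda\overline{\N_G(Q)}\bar B_Q$-mod with the full subcategory of $\Lambda\N_G(Q)B_Q$-mod consisting of modules on which $Q$ acts trivially, and analogously on the $N'$-side; the Morita equivalence of Theorem~\ref{thm:local-Ruh} restricts to an equivalence between these two subcategories, and the corresponding bimodule is precisely $\overline{M'_QC_Q}$. Faithful projectivity on both sides and the Hom-tensor adjunction isomorphisms for the descended bimodule then propagate routinely from those of $M'_QC_Q$. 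The principal obstacle I foresee is the second step: controlling the left and right $Q$-actions on $M'_QC_Q$ after the induction, so as to promote the $\Delta Q$-triviality coming from the Brauer functor into the separate triviality needed for the quotient bimodule to be non-zero and faithfully projective. This is the delicate block-theoretic calculation that the authors signal by invoking the similar methods of \cite{FLZ21}.
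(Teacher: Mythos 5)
Your overall strategy — descending a Morita equivalence of blocks along a common normal $\ell$-subgroup $Q$ — is the right intuition, and your step~0 (that $\bar B_Q$, $\bar C_Q$ are block idempotents because $Q$ is contained in every defect group) is correct. However, the core of your argument contains a claim that is false, and repairing it requires essentially a different method.

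You propose to show that $I_G\cdot M_Q'C_Q=0$ and $M_Q'C_Q\cdot I_{N'}=0$, i.e.\ that the augmentation ideal of $Q$ annihilates the bimodule on both sides, so that the separate left and right $Q$-actions on $M_Q'C_Q$ are trivial. This cannot hold. The bimodule $M_Q'C_Q$ is, by Theorem~\ref{thm:local-Ruh}, a Morita bimodule between $\Lambda\N_G(Q)B_Q$ and $\Lambda\N_{N'}(Q)C_Q$; in particular it is a finitely generated projective generator as a left $\Lambda\N_G(Q)B_Q$-module. A projective $\Lambda\N_G(Q)B_Q$-module on which $Q$ acts trivially would be an inflated module, and inflation along $\N_G(Q)\twoheadrightarrow\N_G(Q)/Q$ does not produce projective modules for the block algebra $\Lambda\N_G(Q)B_Q$ (whose defect groups contain $Q$). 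Concretely, take the degenerate case $\N_G(Q)\cong Q\times H$ for an $\ell'$-group $H$ and $B_Q$ the principal block: then $\Lambda\N_G(Q)B_Q\cong\Lambda Q\otimes_\Lambda\Lambda He$, and any Morita bimodule is of the form $\Lambda Q\otimes_\Lambda M_0$ with $q\in Q$ acting on the $\Lambda Q$ tensor factor by left multiplication — visibly not trivially. What the Brauer construction gives is only that the \emph{diagonal} $\Delta Q$ acts trivially on $\Br_{\Delta Q}(M')$ before induction; this does not, and cannot, upgrade to separate triviality of the left and right $Q$-actions after induction and block cutting. Consequently your subsequent reasoning (that $\overline{M_Q'C_Q}$ equals $M_Q'C_Q$ and that the Morita equivalence of Theorem~\ref{thm:local-Ruh} restricts to the $Q$-trivial subcategories) collapses at the same point.

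The paper proceeds at the level of complexes, not modules. Setting $\mathcal C_Q''=\Ind\,\mathcal C_Q'$, it forms the quotient complex $\overline{\mathcal C_Q''}$ by tensoring with $\Lambda\overline{\N_{N'}(Q)}\bar C_Q$ on one side and $\Lambda\overline{\N_G(Q)}\bar B_Q$ on the other. One first shows (as in \cite[Lemma~4.12]{FLZ21}) that the cohomology of $\overline{\mathcal C_Q''}$ is concentrated in the single degree $d_Q$ and that $H^{d_Q}(\overline{\mathcal C_Q''})\simeq\overline{H^{d_Q}(\mathcal C_Q'')}\simeq\overline{M_Q'C_Q}$. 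One then establishes the Rickard equivalence of $\overline{\mathcal C_Q''}$ directly over $k$ (where $\mathcal C_Q'=\Br_{\Delta Q}(\mathcal C')$, and the quotient construction interacts well with the Brauer functor for $\ell$-permutation complexes), lifts that Rickard equivalence to $\cO$ via \cite[Thm.~5.2]{Ri96}, and only at the very end uses the concentration in a single degree to downgrade Rickard to Morita. The concentration step is what replaces your (false) annihilation claim: one does \emph{not} need the full bimodule to kill the augmentation ideal, only that the derived-level quotient construction has cohomology in one degree, and this holds because $\mathcal C_Q''$ is built from a splendid complex. Any repair of your argument will have to engage with these complex-level facts; a purely module-theoretic descent as you outline it is blocked by the obstruction above.
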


\begin{proof}
	For simplicity we denote $d_Q=\dim(\bY_{\C_{\bU'}(Q)}^{\C_{\bG}(Q)})$.
	Let $\mathcal C_Q''=\Ind^{(\N_G(Q)\ti \N_{N'}(Q)^{\opp})\Delta \N_{\tN'}(Q)}_{(\C_G(Q)\ti \C_{N'}(Q)^{\opp})\Delta \N_{\tN'}(Q)}\mathcal C_Q'$.
	Then 
	$\mathcal C_Q''$ induces a Rickard equivalence between $\Lambda \N_{N'}(Q)C_Q$ and $\Lambda \N_G(Q)B_Q$
	and the 
	bimodule $H^{d_Q}(\mathcal C_Q'')\simeq M'_QC_Q$ induces a Morita equivalence between $\Lambda \N_{N'}(Q)C_Q$ and $\Lambda \N_G(Q)B_Q$.
	Let
	$$\overline{\mathcal C_Q''}=\Lambda \overline{\N_{N'}(Q)}\bar C_Q\otimes_{\Lambda \N_{N'}(Q)} \mathcal C_Q''\otimes_{\Lambda \N_G(Q)} \Lambda \overline{\N_G(Q)}\bar B_Q.$$
	
	As \cite[Lemma 4.12]{FLZ21}, we can show that $H^i(\overline{\mathcal C''_Q})\ne 0$ if and only if $i=d_Q$.	
	In addition, $H^{d_Q}(\overline{\mathcal C''_Q})\simeq \overline{H^{d_Q}(\mathcal C''_Q)}$.
	Then the proof of \cite[Thm.~4.13]{FLZ21} applies here and we only give a sketch.
	
	First, 	if $\Lambda=k$, then $\mathcal C_Q'=\Br_{\Delta Q}(\mathcal C')$ and the complex $\overline{\mathcal C_Q''}$ induces a Rickard equivalence between 
	$k \overline{\N_{N'}(Q)}\bar C_Q$ and $k \overline{\N_G(Q)}\bar B_Q$.
	Then by	 \cite[Thm.~5.2]{Ri96},  when lifting to $\Lambda$, the complex $\overline{\mathcal C_Q''}$ induces a Rickard equivalence between 
	$\Lambda \overline{\N_{N'}(Q)}\bar C_Q$ and $\Lambda \overline{\N_G(Q)}\bar B_Q$.
	Since	$\overline{\mathcal C_Q''}$ is concentrated in a single degree,  the bimodule
	$\overline{H^{d_Q}(\mathcal C''_Q)}\simeq H^{d_Q}(\overline{\mathcal C''_Q})$
	induces a Morita equivalence between 
	$\Lambda \overline{\N_{N'}(Q)}\bar C_Q$ and $\Lambda \overline{\N_G(Q)}\bar B_Q$.
\end{proof}

Now we establish an equivariant bijection between weights.

\begin{prop}\label{prop:-bij-wei}
	There is an $\cN$-equivariant bijection between $\Alp(N',e_s^{L'})$ and $\Alp(G,e_s^{G})$.	
\end{prop}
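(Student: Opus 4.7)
The plan is to construct the bijection Brauer pair by Brauer pair, combining the $\cN$-equivariant bijection of normalized Brauer pairs from Lemma \ref{lem:equ-nor-pair} with the local Morita equivalences on quotient blocks from Theorem \ref{thm:local-quotient}. I will use the standard reformulation of weights: a weight $(Q,\vhi)\in\Alp^0(G,b)$ is the same data as a pair $(Q,B_Q)\in\bNBr(G,b)$ with $Q$ a defect group of $B_Q$, together with an irreducible Brauer character in $\IBr(\overline{\N_G(Q)}\bar B_Q)$. Since $Q$ is a defect group of $B_Q$, the image $\bar B_Q$ in $k\overline{\N_G(Q)}$ has trivial defect, so simple modules in $\bar B_Q$ lift bijectively to the weight characters $\vhi\in\dz(\N_G(Q)/Q)$ with $\bl(\vhi)=B_Q$. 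An analogous parametrization holds for $\Alp^0(N',c)$.

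First, Lemma \ref{lem:equ-nor-pair} supplies an $\cN_c$-equivariant injection $(Q,C_Q)\mapsto(Q,B_Q)$ from $\bNBr(L',c)$ to $\bNBr(G,b)$ for each block $c$ of $N'$ in $e_s^{L'}$ and corresponding block $b$ of $G$. Because $\mathcal C'$ induces a splendid Rickard equivalence between $\Lambda Gb$ and $\Lambda N'c$, Puig's theorem \cite[Thm.~19.7]{Pu99} gives an equivalence of Brauer categories, so in particular this injection is a bijection after passage to conjugacy, and it matches those Brauer pairs whose first coordinate is a defect group on the respective side.

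Second, for each such matching pair $(Q,C_Q)\leftrightarrow(Q,B_Q)$, Theorem \ref{thm:local-quotient} produces a Morita equivalence between $\Lambda\overline{\N_{N'}(Q)}\bar C_Q$ and $\Lambda\overline{\N_G(Q)}\bar B_Q$. Morita equivalence induces a bijection between sets of irreducible Brauer characters, and via the paragraph above this is precisely a bijection between the weight characters of $C_Q$ and of $B_Q$. Assembling these character bijections as $(Q,C_Q)$ runs over $\cN$-orbits in the relevant part of $\bNBr(L',e_s^{L'})$ then yields a bijection $\Alp(N',e_s^{L'})\to\Alp(G,e_s^G)$.

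Finally, to verify $\cN$-equivariance, I will use that the bimodule $M'_Q$ is built from the single $\Delta\cN$-stable complex $\mathcal C'$ (via the functor $\Br_{\Delta Q}$), so all the local Morita equivalences are compatible with the conjugation action of $\N_{\cN}(Q,C_Q)$. Combined with the $\cN_c$-equivariance in Lemma \ref{lem:equ-nor-pair}, this delivers the required global $\cN$-equivariance. The main technical point is precisely this coherence check: making sure that the character bijections coming from distinct Brauer pairs fit together $\cN$-equivariantly, independently of choices of representatives. This should follow by tracing the naturality of the constructions in the proof of Theorem \ref{thm:local-quotient}, along the same lines as \cite[Thm.~4.13]{FLZ21} and the proof of Proposition \ref{prop:-bij-Brauer}.
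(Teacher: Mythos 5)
Your construction outline is correct and matches the paper's proof for the existence part: use Lemma~\ref{lem:equ-nor-pair} to match normalized Brauer pairs, Theorem~\ref{thm:local-quotient} to transport defect-zero Brauer characters on the quotients $\N(Q)/Q$, and glue over conjugacy classes. But you correctly flag the equivariance coherence check as "the main technical point" and then do not actually carry it out, deferring to "tracing the naturality of the constructions." That is precisely the substance that the paper supplies, and it is not automatic.

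The issue is the following. The complex $\mathcal C'$ being $\Delta\cN$-stable is a statement about isomorphism classes, not a choice of compatible isomorphisms, so it does not by itself yield equivariant character bijections at every $Q$; the actual criterion is that the bimodule $M_Q'$ (defined by lifting $\Br_{\Delta Q}(\mathcal C')$ to $\Lambda$, taking the cohomology in the single surviving degree, multiplying by $\mathrm{br}_Q(e_s^{L'})$, and inducing) extends to a $\Lambda\bigl((\N_G(Q)\times\N_{N'}(Q)^{\opp})\Delta\N_{\cN_{\ell'}}(Q)\bigr)$-module which is $\Delta\N_{\cN}(Q)$-stable. This is the local analogue of what makes Proposition~\ref{prop:-bij-Brauer} equivariant globally (there, $M'$ is by construction a restriction of the extended module $\hM$). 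At the level of $Q$, the paper proves the extension in two genuinely different ways: when $\bL'$ is $1$-split, $\mathcal C'$ is concentrated in degree $0$, so $\Br_{\Delta Q}(\hM)$ directly extends $\Br_{\Delta Q}(M')$ and one uses the uniqueness of lifts of $\ell$-permutation modules to get a $\Delta\N_{\cN}(Q)$-stable lift to $\Lambda$; when $\bL'$ is not $1$-split, this is unavailable, and the paper instead produces an extension of $H_c^{\dim}(\bY^{\N_{\bG}(Q)}_{\C_{\bU'}(Q)},\Lambda)\mathrm{br}_Q(e_s^{L'})$ directly (via \cite[Lemma 8.2]{Ru21}), compares it to $M_Q'$ using Lemma~\ref{lem:ext-abel} to extract a one-dimensional twist, and then extends that linear character using that $\cN/\tL'$ is abelian. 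Your "(via the functor $\Br_{\Delta Q}$)" parenthetical also slightly misstates the construction of $M_Q'$ in the non-$1$-split case, where the complex is not concentrated in degree $0$ and one must pass through the lift of $\Br_{\Delta Q}(\mathcal C')$ and its cohomology. Without the extension statement and the case split, the equivariance claim remains unjustified.
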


\begin{proof}
	By Theorem~\ref{thm:local-Ruh}, the bimodule $M_Q'$ induces a bijection
	$$\Irr(\N_{N'}(Q),C_Q)\to\Irr(\N_{G}(Q),B_Q).$$
	Then by Theorem~\ref{thm:local-quotient},
	this gives a bijection between $$\dz(\N_{N'}(Q)/Q)\cap \Irr(\N_{N'}(Q),C_Q)\to\dz(\N_{G}(Q)/Q)\cap\Irr(\N_{G}(Q),B_Q).$$
	Thus using Lemma~\ref{lem:equ-nor-pair}, 
	this gives a
	bijection between $\Alp(N',e_s^{L'})$ and $\Alp(G,e_s^{G})$,
	which is similar as in the proof of \cite[Thm.~4.15]{FLZ21}.
	
	By Lemma~\ref{lem:equ-nor-pair}, in order to complete the proof, it suffices to show that  the bijection $$\Irr(\N_{N'}(Q),C_Q)\to\Irr(\N_{G}(Q),B_Q)$$ induced by the bimodule  $M_Q'$ is $\N_{\cN}(Q,C_Q)$-equivariant.
	We shall have established this if we prove the following: the bimodule
	$M_Q'$ extends to a   $\Lambda ((\N_G(Q)\ti \N_{N'}(Q)^{\opp})\Delta \N_{\cN_{\ell'}}(Q))$-module, which is $\Delta\N_{\cN}(Q)$-stable.

	We first assume that $\bL'$ is 1-split.
	Then  the proof of \cite[Lemma 8.3]{Ru21} applies and we recall as follows.
	In this case  $\cC'\simeq M'$ and $H^0(\Br_{\Delta Q}(\cC'))\simeq \Br_{\Delta Q}(M')$.
	In this way, $\Br_{\Delta Q}(\hM)$ is a $k((\C_G(Q)\ti\C_{L'}(Q)^{\opp})\Delta \N_{\cN_{\ell'}}(Q))$-module extending $\Br_{\Delta Q}(M')$ since $\hM$ is a $\Lambda ((G\ti {L'}^{\opp})\Delta \cN_{\ell'})$-module extending $M'$.
	In addition, $\Br_{\Delta Q}(\hM)$  is an $\ell$-permutation module.
	Thus by \cite[Thm.~5.11.2]{Lin18}, there exists a unique $\ell$-permutation module $\hM_Q$ which lifts $\Br_{\Delta Q}(\hM)$ to $\Lambda$ and it follows from the uniqueness of this lift that $\hM_Q$ is $\N_{\cN}(Q)$-stable.
	So $H^0(\cC_Q')\mathrm{br}_Q(e_s^{L'})$ extends to $(\C_G(Q)\ti\C_{N'}(Q)^{\opp})\Delta \N_{\cN_{\ell'}}(Q)$ which is $\Delta \N_{\cN}(Q)$-stable.
	By applying the induction functor, the bimodule
	$M_Q'$ extends to a   $\Lambda ((\N_G(Q)\ti \N_{N'}(Q)^{\opp})\Delta \N_{\cN_{\ell'}}(Q))$-module, which is $\Delta\N_{\cN}(Q)$-stable.

	Now we assume that $\bL'$ is not 1-split. 
	%If $\ell\nmid q-\eps$, then this follows by \cite[Lemma~8.3]{Ru21}. So we assume that $\ell\mid q-\eps$.
	By Lemma 6.11 and 6.13 of \cite{Ru21},  $\cN/\tL'$ is abelian.
	So according \cite[lemma 8.2]{Ru21} 
	there exists a $\Delta\N_{\cN}(Q)$-stable $\Lambda ((\N_G(Q)\ti \N_{N'}(Q)^{\opp})\Delta \N_{\cN_{\ell'}}(Q))$-module $\hM_Q'$ extending
	the bimodule
	$H_c^{\dim}(\bY^{\N_{\bG}(Q)}_{\C_{\bU'}(Q)},\Lambda)\mathrm{br}_Q(e_s^{L'})$.
	Let $M_Q''$ be the restriction of $\hM_Q'$ to $\N_G(Q)\ti \N_{N'}(Q)^{\opp}$.
	Note that  the bimodules $M_Q'$ and $M_Q''$ are  extensions of the  $\Lambda ((\N_G(Q)\ti \N_{L'}(Q)^{\opp})$-module $H_c^{\dim}(\bY^{\N_{\bG}(Q)}_{\C_{\bU'}(Q)},\Lambda)\mathrm{br}_Q(e_s^{L'})$.
	Since $N'/L'$ is of $\ell'$-order, by Lemma~\ref{lem:ext-abel}, there exists a $\Lambda (\N_{N'}(Q)/\N_{L'}(Q))$-module $V$ such that $\dim_\Lambda V=1$ and $M_Q'=M_Q''\otimes V$.
	Let $\lambda\in\Irr(\N_{N'}(Q)/\N_{L'}(Q))$ corresponding to $V$.
	
	We may regard $\lambda$ as a linear character of $\tN'/\tL'\cong N'/L'$.
Then there is an extension  $\lambda'$ of $\lambda$ to $\cN/\tL'$ since $\cN/\tL'$ is abelian.
	Regard $\lambda'$ as a character of $(\N_G(Q)\ti \N_{L'}(Q)^{\opp})\Delta \N_{\cN}(Q)/(\N_G(Q)\ti \N_{L'}(Q)^{\opp})\Delta \N_{\tL'}(Q)$.
	Let $V_Q'$ be the $\Lambda ((\N_G(Q)\ti \N_{N'}(Q)^{\opp})\Delta \N_{\cN_{\ell'}}(Q))$-module corresponding to $\Res^{(\N_G(Q)\ti \N_{L'}(Q)^{\opp})\Delta \N_{\cN}(Q)}_{(\N_G(Q)\ti \N_{L'}(Q)^{\opp})\Delta \N_{\cN_{\ell'}}(Q)}\lambda'$.
	Then $\hM_Q'\otimes V_Q'$ is an extension of $M_Q'$, which is $\Delta\N_{\cN}(Q)$-stable.
	This completes the proof.
\end{proof}

\section{Reduce to isolated blocks}
\label{sec:red-iso}

Recall that $G=\SL_n(\eps q)$ and $\tG=\GL_n(\eps q)$.
The blocks of $\tG$  were classified by Fong--Srinivasan \cite{FS82} and Brou\'e \cite{Br86}, while the weights of $\tG$ were obtained by Alperin--Fong \cite{AF90} and An \cite{An92,An93,An94}.
Using these, there exists a blockwise $\IBr(\tG/G)\rtimes \cB$-equivariant bijection between $\IBr(\tG)$ and $\Alp(\tG)$, see \cite{Fe19} and \cite{LZ18}.
In  addition, the weights of $G$ were classified 
and the inductive condition of the non-blockwise Alperin weight conjecture for simple groups of type~$\mathsf A$ was established for $G$ in \cite{FLZ20a}. 

We first give a criterion for the inductive BAW condition for groups of type $\mathsf A$.

\begin{prop}\label{prop:indu-condition}
	Let $s$ be a quasi-isolated semisimple $\ell'$-element of $(\bG^*)^F$.
	Keep the notation in \S~\ref{subsec:equ-bijection}.
	Assume that  the following hold.
	\begin{enumerate}[\rm(i)]
		\item There exists a blockwise $\Lin_{\ell'}(\tN'/N')\rtimes \cN$-equivariant bijection $$\tilde f:\IBr(\tN'\mid \IBr(N',e_s^{L'}))\to \Alp(\tN'\mid \Alp(N',e_s^{L'})).$$ 
		\item There exists a blockwise $\cN$-equivariant bijection $f:\IBr(N',e_s^{L'})\to\Alp(N',e_s^{L'})$ such that
		\begin{enumerate}[\rm(a)]
        \item $\tilde f(\IBr(\tN'\mid\psi))=\Alp(\tN'\mid f(\psi))$ for any $\psi\in\IBr(N,e_s^{L'})$, and
        \item for any $\psi\in\IBr(N',e_s^{L'})$, $\tpsi\in\IBr(\tN'\mid\psi)$ and 		$f(\psi)=\overline{(Q,\vhi)}$, $\tilde f(\tpsi)=\overline{(\tQ,\tvhi)}$	 the following hold:  $\bl(\hpsi)=\bl(\widehat\vhi)^{\tN'_\psi}$, where 	 $\hpsi\in\IBr(\tN'_\psi\mid\psi)$ is the Clifford correspondent of $\tpsi$ and	 $\widehat\vhi\in\Irr(\N_{\tN'}(Q)_\vhi\mid\vhi)$ is the Clifford correspondent of $\Delta_\vhi^{-1}(\tpsi)$. 
		\end{enumerate}
		\end{enumerate}
	Then any block  $b$  in $s$ is BAW-good.	
\end{prop}

\begin{proof}
	We will use the criterion for the inductive BAW condition given by Brough and Sp\"ath \cite{BS20} and see \cite[Thm.~3.18]{FLZ21}.
	As pointed in the proof of \cite[Prop.~5.2]{FLZ21}, condition    (i)  of \cite[Thm.~3.18]{FLZ21} holds and for the rest conditions we transfer from $\cB$ to $\cA$.
	By Theorem 7.1 and 8.1 of \cite{FLZ20a}, the condition   (iv) of \cite[Thm.~3.18]{FLZ21} holds.
	According to Proposition~\ref{prop:-bij-Brauer}, there is an $\cN$-equivariant bijection  $$\IBr(N',e_s^{L'})\to\IBr(G,e_s^{G}),$$ while
	by Proposition~\ref{prop:-bij-wei}, there is an $\cN$-equivariant bijection  $$\Alp(N',e_s^{L'})\to\Alp(G,e_s^{G}).$$
	Thus condition (ii) of \cite[Thm.~3.18]{FLZ21} holds.

	By \cite[Prop.~1.1]{BR06}, one has the canonical isomorphism
	$$\Ind^{\tG\ti (\tL')^{\opp}}_{(G\ti (L')^{\opp})\Delta \tL'}\G\Ga_c(\bY_{\bU'}^{\bG},\cO)e_s^{L'} \simeq \G\Ga_c(\bY_{\bU'}^{\tbG},\cO)e_s^{L'}$$
	in $\mathrm{Ho}^b(\cO(\tG\ti\tL^{\opp}))$.
	Let $\cC=\G\Ga_c(\bY_{\bU'}^{\bG},\cO)^{\mathrm{red}}e_s^{L'}$ and $\widetilde{\cC}=\Ind^{\tG\ti (\tL')^{\opp}}_{(G\ti (L')^{\opp})\Delta \tL'}(\cC)$.
	
	Recall that $\iota^*:\tbG^*\twoheadrightarrow \bG$ denotes canonical epimorphism.
	Let $J$ be a set of representatives of conjugacy classes of $\ell'$-elements $\tilde t\in(\tbG^*)^F$ such that $\iota(\tilde t)=s$.
	Then $J\subseteq {(\tbL')^*}^F$. Let $e=\sum_{\tilde t\in J}  e_{\tilde t}^{\tL'}$.
	By \cite[Lem.~7.4]{BDR17}, one has the idempotent decompositions $e_s^{G}=\sum_{\tilde t\in J} e_{\tilde t}^{\tG}$ and $e_s^{L'}=\sum_{n\in N'/L'}  n e n^{-1}$.
	Similar as the proof of \cite[Thm.~7.5]{BDR17}, the complex $\widetilde{\cC}'=\widetilde{\cC}e\otimes_{\Lambda L'}\Lambda N'$ induces a splendid Rickard equivalence between $\Lambda \tN'e_s^{L'}$ and $\Lambda \tG e_s^{G}$.
	%Moreover $\widetilde{\cC}'=\Ind^{\tG\ti (\tN')^{\opp}}_{(G\ti (L')^{\opp})\Delta \tL'}(\cC')$.
	The cohomology of $\widetilde{\cC}'$ is concentrated in degree $\dim(\bY_{\bU'}^{\bG})$.
	Let $\tM'=H^{\dim(\bY_{\bU'}^{\bG})}(\widetilde{\cC}')$.
	Then 
	$\tM'\simeq (H^{\dim}_c(\bY_{\bU'}^{\tbG},\Lambda) e_s^{L'}) e\otimes_{\Lambda L'}\Lambda N'$
	induces a Morita equivalence between $\Lambda \tN'e_s^{L'}$ and $\Lambda \tG e_s^{G}$.
	Moreover, $\tM'\simeq \Ind^{\tG\ti (\tN')^{\opp}}_{(G\ti (L')^{\opp})\Delta \tN'} (M')$.
	Then similar as in the proof of \cite[Prop.~5.2]{FLZ21}, condition (iii) of \cite[Thm.~3.18]{FLZ21} also holds,  using the methods of \cite[\S 1.8]{Ru21}.
\end{proof}	

Now we reduce the verification of the  inductive BAW condition for type $\mathsf A$ to the isolated blocks.

\begin{thm}\label{thm:red-isolated}
	Assume that all isolated $\ell$-blocks of quasi-simple groups of Lie type $\mathsf A$ defined over a field of characteristic different from $\ell$ are BAW-good.
	Then all $\ell$-blocks of  quasi-simple groups of Lie type $\mathsf A$ are BAW-good.
\end{thm}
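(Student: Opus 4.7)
The plan is induction on the rank of $\bG = \SL_n(\overline\F_p)$. Let $B$ be an $\ell$-block of $G = \SL_n(\eps q)$. By the Brou\'e--Michel decomposition, $B$ lies in some semisimple $\ell'$-element $s \in (\bG^*)^F$. If $s$ is isolated, then $B$ is an isolated block and is BAW-good by the hypothesis. Otherwise, by the reduction carried out in \cite{FLZ21}, one may assume that $s$ is strictly quasi-isolated and that $\ell\nmid q-\eps$, so the setup of Section~\ref{sec:Jor-wei} applies, giving the proper $F$-stable Levi subgroup $\bL'$ of $\bG$ with $N'/L'$ cyclic of prime $\ell'$-order, together with $\tN' \le \tG$ and the groups $\cN$ and $\cN_{\ell'}$ from Ruhstorfer's construction.

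By Proposition~\ref{prop:indu-condition}, it now suffices to exhibit blockwise equivariant bijections $f\colon \IBr(N',e_s^{L'}) \to \Alp(N',e_s^{L'})$ and $\tilde f\colon \IBr(\tN' \mid \IBr(N',e_s^{L'})) \to \Alp(\tN' \mid \Alp(N',e_s^{L'}))$ with the compatibility stated there. Since $\bL'$ is a proper Levi of $\bG$ of type $\mathsf A$, the derived subgroup $[\bL',\bL']^F$ is a central product of quasi-simple groups of Lie type $\mathsf A$ of strictly smaller rank, so by the induction hypothesis all their $\ell$-blocks are BAW-good. Arguing as in \cite{FLZ21}, one then transfers the inductive BAW data through central extensions and direct products to obtain a blockwise $(\cN\cap\tL')$-equivariant bijection $\IBr(L', e_s^{L'}) \to \Alp(L', e_s^{L'})$ satisfying the block isomorphism condition $\geqslant_b$.

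The bijection $f$ for $N'$ is obtained by lifting along the cyclic $\ell'$-extension $L' \unlhd N'$. As $N'/L'$ is of $\ell'$-order, Clifford theory for Brauer characters combined with the strong block-isomorphism formalism of Section~\ref{sec:char-triple} (notably Lemma~\ref{lem:equ-transitive} and the DGN-version in Proposition~\ref{lem:Ladisch}) allows one to extend and transport bijections while preserving $\cN$-equivariance and $\geqslant_b$. To produce $\tilde f$, one further lifts from $N'$ to $\tN'$: since $\tN'/N'$ is abelian, Theorem~\ref{thm:cliff-equiva-bij}, combined with the weight-covering formalism of Brough--Sp\"ath, yields the desired bijection, provided one verifies that every weight character of $N'$ occurring in $e_s^{L'}$ extends to its $\tN'$-stabilizer and that the analogous extendibility holds at the local level. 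With $f$ and $\tilde f$ in hand, Proposition~\ref{prop:indu-condition} concludes that every block in $s$ is BAW-good, completing the induction.

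The main obstacle is not the existence of set bijections but the systematic verification of the strong block isomorphism $\geqslant_b$ along the tower $L' \le N' \le \tN'$: one must carry projective representations and their factor sets coherently from the quasi-simple factors of $L'$ all the way up, checking both block induction and compatibility with the Brauer functor $\Br_Q$ at every radical $\ell$-subgroup $Q$. The extendibility hypothesis needed for Theorem~\ref{thm:cliff-equiva-bij}, namely the extension of weight characters $\vhi\in\dz(\N_{N'}(Q)/Q)$ to their $\N_{\tN'}(Q)$-stabilizers, reduces to a case analysis on radical $\ell$-subgroups $Q$ of $\bL'^F$; this is handled by the explicit description of such radical subgroups for type $\mathsf A$ in \cite{An92,An93,An94} and the weight classifications of \cite{FLZ20a}.
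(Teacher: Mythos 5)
Your approach follows the paper's actual proof closely: induction on the rank, reduction to strictly quasi-isolated $s$ with $\ell\nmid q-\eps$, transfer to the block $c$ of $N'$ via the Morita equivalence, and then an appeal to Proposition~\ref{prop:indu-condition} after building the required bijections from the induction hypothesis applied to the factors of $L_0'=[\bL',\bL']^F$, lifted through $L'\unlhd N'\unlhd\tN'$ via Theorem~\ref{thm:cliff-equiva-bij}. There are, however, three gaps you have not closed.

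First, the inductive BAW condition is formulated for the universal $\ell'$-covering group. Your argument silently takes that group to be $\SL_n(\eps q)$, which is false when the Schur multiplier is exceptional. The paper disposes of this before the induction begins: by \cite[Prop.~4.6]{FLZ20b} the simple groups with $n\le 7$ are BAW-good, and the remaining exceptional cases $\PSL_3(4)$, $\PSU_4(3)$, $\PSU_6(2)$ are covered by \cite{Du19,Du20}. Without this step your induction is running over the wrong covering group in low rank. Second, you assert that $L_0'$ is a central product of quasi-simple groups, but its direct factors $H_i$ can be solvable special linear or unitary groups (small rank, small $q$), to which the inductive hypothesis does not apply; the paper treats those factors separately, invoking the iBAW-bijections constructed in \cite{FLZ20b}. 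Third, your final paragraph identifies the real difficulty --- the $\geqslant_b$-compatibility of the DGN-correspondence needed to verify the hypotheses of Theorem~\ref{thm:cliff-equiva-bij} --- but leaves it at the level of a plan. The paper's Lemma~\ref{lem:ext-geq_b} is precisely that verification, and its proof is not routine: it uses the explicit product structure of $\N_{L_0'}(Q_0)$ and $\N_A(Q_0)$, the $\cB_i$-stability of twisted basic subgroups from \cite{FLZ20a}, the extendibility statements of \cite[Thm.~7.1]{FLZ20a}, and Proposition~\ref{lem:Ladisch} applied with a factorisation $G=XL$. Your sketch gestures at the ingredients but does not show that the specific extension hypotheses of Proposition~\ref{lem:Ladisch} (extensions of $\theta$ to both $X$ and $L$, with $X/N$ abelian) are satisfied, and without that the block isomorphism required in Theorem~\ref{thm:cliff-equiva-bij} is not established.
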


\begin{proof}
	By \cite[Prop.~4.6]{FLZ20b}, the simple groups $\PSL_n(q)$ and $\PSU_n(q)$ are BAW-good if $n\le 7$, which implies that we only need to consider the non-exceptional Schur multiplier case. 
	Note that the simple groups $\PSL_3(4)$, $\PSU_4(3)$ and $\PSU_6(2)$ were dealt with in \cite{Du19,Du20}.	
	As above we let $G=\bG^F=\SL_n(\eps q)$.
	By \cite[Thm.~5.7]{FLZ21}, it suffices to show that the strictly quasi-isolated blocks of $\bG^F$ are BAW-good, since Assumption 5.3 of \cite{FLZ21} holds for $G$ by \cite[Thm.~4.1]{CS17} and the unitriangularity of decomposition matrices in \cite{KT09,De17}; see also \cite[Thm.~8.1]{FLZ20a}.
	
	We show this assertion by induction on the rank of $\bG$.	
	Let $b$ be a strictly quasi-isolated $\ell$-block of $G$, which is not isolated, and 
	let $s$ be a semisimple $\ell'$-element of ${\bG^*}^F$ such that $b$ is in $s$.
	Keep the notation of the previous section.	
	Let $c$ be the block of $N'$ which is Morita equivalent to $b$ as in the previous section.	
	Then we verify the inductive BAW condition by showing the conditions of Proposition~\ref{prop:indu-condition}.
	
	Let $L'_0=[\bL',\bL']^F$ and let $c_0$ be a block of $L_0'$ covered by $c$.	
	Note that $$L_0'=H_1\ti \cdots\ti H_t$$ where the finite groups $H_i$ are special linear or unitary groups.	
	Let $$c_0=c_{0,1}\otimes\cdots\otimes c_{0,t},$$ where $c_{0,i} $ is a block of $H_i$ for $1\le i\le t$.
	If $H_i$ is quasi-simple, then we know that $c_{0,i}$ is BAW-good	by our induction hypothesis. If $H_i$ is solvable, then there is an iBAW-bijection for $c_{0,i}$ by \cite{FLZ20b}; see also the proof of  \cite[Prop.~5.6]{FLZ21}.
	In this way,  similar	as the proof of  \cite[Prop.~5.6]{FLZ21}, there is an iBAW-bijection  $\IBr(c_0)\to \Alp(c_0)$.
	
	We use the following lemma, whose proof is postponed until after this one is complete.
	
	\begin{lem}\label{lem:ext-geq_b}
		Let $(Q_0,\vhi_0)$ be a weight of $L'_0$ and $(Q,\vhi)$ be a weight of $L'$ covering $(Q_0,\vhi_0)$. Let $\vhi_0'\in\dz(\N_{L'_0}(Q)/Q_0)$ be the DGN-correspondent of $\vhi_0$.
		Then $$(\N_{\cN}(Q_0)_{\hvhi},\N_{L_0'}(Q_0)Q,\hvhi)\geqslant_b (\N_{\cN}(Q)_{\hvhi'},\N_{L_0'}(Q)Q,\hvhi')$$
		where $\hvhi\in\IBr(\N_{L_0'}(Q_0)Q\mid(\vhi_0)^0)$ and $\hvhi'\in\IBr(\N_{L_0'}(Q)Q\mid (\vhi_0')^0)$.
	\end{lem}
	
	Thus by \cite[Lemma 3.12]{FLZ21} and Theorem~\ref{thm:cliff-equiva-bij}, this assertion holds by a similar argument as in the proof of \cite[Thm.~5.7]{FLZ21}.
\end{proof}

\begin{proof}[Proof of Lemma~\ref{lem:ext-geq_b}]
Recall that $L_0'=H_1^{t_i}\ti \cdots\ti H_u^{t_u}$, where $H_i$ is a special linear or unitary
group for every $i$ and $H_i$ and $H_j$ are not isomorphic if $i\ne j$.
Let $\tbL'=\Z(\bG)\bL'$.
Then $$\tL'=(\tbL')^F=\tH_1^{t_i}\ti \cdots\ti\tH_u^{t_u}$$ where $\tH_i$ is the general  linear or unitary group corresponding to $H_i$.
Let $\cB_i$ be the group generated by field and graph automorphisms of $\tH_i$.
Then by \cite[Thm.~2.5.1]{GLS98}, $\tH_i\cB_i$ induces all automorphisms of $H_i$.
So the group $A=\prod_{i=1}^{u} \tH_i\cB_i \wr\fS_{t_i}$ induces all automorphisms of $L_0'$.
By \cite[Thm.~3.5]{Sp17}, it suffices to show that
 $$(\N_{A}(Q_0)_{\hvhi},\N_{L_0'}(Q_0)Q,\hvhi)\geqslant_b (\N_{A}(Q)_{\hvhi'},\N_{L_0'}(Q)Q,\hvhi')$$
where $\hvhi\in\IBr(\N_{L_0'}(Q_0)Q\mid(\vhi_0)^0)$ and $\hvhi'\in\IBr(\N_{L_0'}(Q)Q\mid (\vhi_0')^0)$.

Let $Q_0=\prod_{i=1}^u\prod_{j=1}^{m_i} (Q_{0,i,j})^{t_{ij}}$, where $Q_{0,i,j}$ is a radical subgroup of $H_i$, 
$Q_{0,i,j_1}\ncong Q_{0,i,j_2}$ if  $j_1\ne j_2$,
and $\sum_{j=1}^{m_i}t_{ij}=t_i$.
Thus $$\N_{L_0'}(Q_0)=\prod_{i=1}^u\prod_{j=1}^{m_i}\N_{H_i}(Q_{0,i,j})^{t_{ij}}.$$
Up to $\tH_i$-conjugation, we may assume that $Q_{0,i,j}=\tQ_{0,i,j}\cap H_i$, where $\tQ_{0,i,j}$ is a radical subgroup of $\tH_i$ which is a direct product of basic subgroups, in the sense of \cite{AF90,An94}.
In \cite[\S3]{FLZ20a}, a twisted version of radical subgroups of linear or unitary groups was introduced and the twisted basic subgroups are stable under the action of field and automorphisms.
So in our case, we may assume that $Q_{0,i,j}$ is $\cB_i$-stable for all $i$, $j$.
Thus 
$$\N_A(Q_0)=\prod_{i=1}^u\prod_{j=1}^{m_i}\N_{\tH_i}(Q_{0,i,j})\cB_i\wr \fS_{t_{ij}}.$$

By Proposition~\ref{lem:Ladisch}, it suffice to show that
there is a character  $\phi$ of $\N_{L_0'}(Q_0)$, which is $\prod_{i=1}^{u} \tH_i \wr\fS_{t_i}$-conjugate to $\vhi_0$, such that
$\N_A(Q_0)_\phi=\N_{\tL'}(Q_0)_\phi\N_{B}(Q_0)_\phi$ and
$\phi$ extends to
$\N_{\tL'}(Q_0)_\phi$ and $\N_{B}(Q_0)_\phi$, where
$B=\prod_{i=1}^{u} H_i\cB_i \wr\fS_{t_i}$.
Notice that it is obvious that $\phi$ extends to
$\N_{\tL'}(Q_0)_\phi$.

Now let $$\vhi_0=\prod_{i=1}^u\prod_{j=1}^{m_i}\prod_{k=1}^{l_j} (\vhi_{0,i,j,k})^{t_{ijk}}$$ where $\vhi_{0,i,j,k}\in\Irr(\N_{H_i}(Q_{0,i,j})/Q_{0,i,j})$ and $\sum_{k=1}^{l_j}t_{ijk}=t_{ij}$.
Up to $\prod_{i=1}^u\tH_i\wr \fS_{t_i}$-conjugation, we may assume that 
for every fixed $i$ and $j$,
any two of $\vhi_{0,i,j,k}$'s ($1\le k\le l_i$) are not $\N_{\tH_i}(Q_{0,i,j})$-conjugate.

Now By \cite[Thm.~7.1]{FLZ20a}, there exists $\phi_{i,j,k}\in\Irr(\N_{H_i}(Q_{0,i,j})/Q_{0,i,j})$ which is $\N_{\tH_i}(Q_{0,i,j})$-conjugate to $\vhi_{0,i,j,k}$, such that $$(\N_{\tH_i}(Q_{0,i,j})\cB_i)_{\phi_{i,j,k}}=\N_{\tH_i}(Q_{0,i,j})_{\phi_{i,j,k}}(\cB_i)_{\phi_{i,j,k}}$$ and $\phi_{i,j,k}$ extends to $\N_{H_i}(Q_{0,i,j})(\cB_i)_{\phi_{i,j,k}}$.
Let $$\phi=\prod_{i=1}^u\prod_{j=1}^{m_i}\prod_{k=1}^{l_j} (\phi_{i,j,k})^{t_{ijk}}.$$
Then $$\N_A(Q_0)_\phi=\prod_{i=1}^u\prod_{j=1}^{m_i}\prod_{k=1}^{l_j}\N_{\tH_i}(Q_{0,i,j})_{\phi_{i,j,k}}(\cB_i)_{\phi_{i,j,k}}\wr \fS_{t_{ijk}}.$$
So
$\N_A(Q_0)_\phi=\N_{\tL'}(Q_0)_\phi\N_{B}(Q_0)_\phi$.
By \cite[Lemma~25.5]{Hu98}, characters extend to wreath products.
Thus $\phi$  extends to
$\N_{B}(Q_0)_\phi$.
This completes the proof.
\end{proof}

Finally, we prove our main theorem.

\begin{proof}[Proof of Theorem~\ref{main-thm}]
As pointed in the proof of the beginning of Theorem~\ref{thm:red-isolated}, we can assume that $G=\SL_n(\eps q)$ is the universal covering of the simple group $S=\PSL_n(\eps q)$ (here $\PSL_n(-q)$ denotes $\PSU_n(q)$).	
Thanks to \cite[Thm.~C]{Sp13}, we only need to consider the non-defining characteristic $\ell\nmid q$.
By Theorem~\ref{thm:red-isolated}, it suffices to verify the inductive BAW condition for the isolated blocks.
According to \cite[Prop.~5.2]{Bo05}, the  isolated blocks of $G$ are just the unipotent blocks, which is BAW-good by \cite[Thm.~2]{FLZ20b}.
Thus we complete the proof.	
\end{proof}

%%%%%%%%%%%%%%%%%%%%%%%%%%%%%%%%%%%%%%%%%%%%%%%%%%%%%%%%%%%%%%%%%%%%%%%%%

\end{document}